\newcommand{\C}{\mathbb{C}}
\newcommand{\F}{\mathbb{F}}
\newcommand{\Q}{\mathbb{Q}}
\newcommand{\Z}{\mathbb{Z}}
\newcommand{\bk}{\Bbbk}
\newcommand{\fm}{\mathfrak{m}}
\newcommand{\fg}{\mathfrak{g}}
\newcommand{\cB}{\mathscr{B}}
\newcommand{\fH}{\mathfrak{H}}
\newcommand{\bW}{\mathbf{W}}
\newcommand{\cN}{\mathcal{N}}
\newcommand{\tcN}{\widetilde{\mathcal{N}}}
\newcommand{\cO}{\mathcal{O}}
\newcommand{\tfg}{\widetilde{\mathfrak{g}}}
\newcommand{\Lie}{\mathrm{Lie}}
\newcommand{\rs}{\mathrm{rs}}
\newcommand{\Gr}{\mathsf{Gr}}
\newcommand{\fN}{\mathfrak{N}}
\newcommand{\Gv}{{\check G}}
\newcommand{\cM}{\mathcal{M}}
\newcommand{\allmod}{\text{\textrm{-}}\mathrm{mod}}
\newcommand{\Rep}{\mathsf{Rep}}
\newcommand{\For}{\mathrm{For}}
\newcommand{\uo}{{}^\circ \hspace{-1.5pt}}
\newcommand{\ut}{{}^{\mathrm{t}} \hspace{-1.5pt}}
\newcommand{\Formod}{\mathrm{For}^{\mathrm{m}}}
\newcommand{\BC}{\mathrm{(BC)}}
\newcommand{\Co}{\mathrm{(Co)}}
\newcommand{\Const}{\mathrm{(Const)}}
\newcommand{\Ss}{\mathbb{S}}
\newcommand{\Ex}{\mathrm{E}}
\def\DB{{\mathbf D}}
\def\EB{{\mathbf E}}
\newcommand{\Sh}{\mathsf{Sh}}
\newcommand{\Shc}{\mathsf{Sh}_{\mathrm{c}}}
\newcommand{\Db}{D^{\mathrm{b}}}
\newcommand{\Dbc}{D^{\mathrm{b}}_{\mathrm{c}}}
\newcommand{\Dbctf}{D^{\mathrm{b}}_{\mathrm{ctf}}}
\newcommand{\Dcon}{\Db_{\mathrm{con}}}
\newcommand{\Perv}{\mathsf{Perv}}
\newcommand{\ub}[1]{\underline{#1}}
\newcommand{\ubb}{\ub{(-)}}
\newcommand{\bT}{\mathbb{T}}
\newcommand{\cA}{\mathcal{A}}
\newcommand{\cL}{\mathcal{L}}
\newcommand{\IC}{\mathrm{IC}}
\newcommand{\wt}{\check\omega}
\newcommand{\Spr}{\underline{\mathsf{Spr}}}
\newcommand{\Groth}{\underline{\mathsf{Groth}}}
\newcommand{\drho}{\dot\rho}
\newcommand{\dvarphi}{\dot\varphi}
\newcommand{\dr}{\dot{r}}
\newcommand{\simto}{\xrightarrow{\sim}}
\newcommand{\End}{\mathrm{End}}
\DeclareMathOperator{\Hom}{Hom}
\DeclareMathOperator{\rk}{rk}
\DeclareMathOperator{\Irr}{Irr}
\newcommand{\id}{\mathrm{id}}
\newcommand{\sgn}{\mathsf{sgn}}
\newcommand{\sC}{\mathscr{C}}
\newtheorem*{thm*}{Theorem}
\numberwithin{equation}{section}
\newtheorem{thm}{Theorem}[section]
\newtheorem{lem}[thm]{Lemma}
\newtheorem{prop}[thm]{Proposition}
\newtheorem{cor}[thm]{Corollary}
\theoremstyle{definition}
\theoremstyle{remark}
\newtheorem{rmk}[thm]{Remark}
\newtheorem{ex}[thm]{Example}
\title[Weyl group actions on the Springer sheaf]{Weyl group actions on the Springer sheaf}
\author{Pramod N. Achar}
\address{Department of Mathematics\\
  Louisiana State University\\
  Baton Rouge, LA 70803\\
  U.S.A.}
\email{pramod@math.lsu.edu}
\author{Anthony Henderson}
\address{School of Mathematics and Statistics\\
  University of Sydney, NSW 2006\\
  Australia}
\email{anthony.henderson@sydney.edu.au}
\author{Daniel Juteau}
\address{Laboratoire de Math\'ematiques Nicolas Oresme\\
  Universit\'e de Caen, BP 5186\\
  14032 Caen Cedex\\ 
  France}
\email{daniel.juteau@unicaen.fr}
\author{Simon Riche}
\address{Universit{\'e} Blaise Pascal - Clermont-Ferrand II, Laboratoire de Math{\'e}matiques, CNRS, UMR 6620, Campus universitaire des C{\'e}zeaux, F-63177 Aubi{\`e}re Cedex, France
}
\email{simon.riche@math.univ-bpclermont.fr}
\subjclass{Primary 17B08, 20G05; Secondary 14M15}
\thanks{P.A. was supported by NSF Grant No.~DMS-1001594.  A.H. was supported by ARC Future Fellowship Grant No.~FT110100504. D.J. was supported by ANR Grant No.~ANR-09-JCJC-0102-01. S.R. was supported by ANR Grants No.~ANR-09-JCJC-0102-01 and ANR-2010-BLAN-110-02.}
\begin{document}

\begin{abstract}
We show that two Weyl group actions on the Springer sheaf with arbitrary coefficients, one defined by Fourier transform and one by restriction, agree up to a twist by the sign character. This generalizes a familiar result from the setting of $\ell$-adic cohomology, making it applicable to modular representation theory. We use the Weyl group actions to define a Springer correspondence in this generality, and identify the zero weight spaces of small representations in terms of this Springer correspondence.
\end{abstract}

\maketitle

\section{Introduction}
\label{sect:intro}

\subsection{}

In the 1970s, T.A.~Springer made the surprising
discovery~\cite{springer} that the Weyl group $\bW$ of a reductive
group $G$ acts in a natural way on the $\ell$-adic cohomology of
Springer fibres.  Shortly thereafter, Lusztig~\cite{lus} 
gave another construction of such an action, using the brand-new
language of perverse sheaves. It is a fact of fundamental importance
that these two constructions almost coincide:
they differ only by the sign character
of $\bW$, as shown in~\cite{hotta}. Thus there is essentially only one
Springer correspondence over a field of characteristic zero. The aim of this paper is to prove
a similar result with arbitrary coefficients.

The study of Springer theory with coefficients in a local ring or a finite field was initiated by the third author in his thesis~\cite{juteau}.  That work, following~\cite{hk, bry}, constructed the $\bW$-action via a kind of Fourier transform (as did Springer, although his construction was 
different).  On the other hand, in the later paper~\cite{ahr}, the other 
authors worked with a $\bW$-action defined as in~~\cite{lus, bm}.  
One purpose of this paper is to fill in the relationship between~\cite{juteau} and~\cite{ahr}, and hence derive further consequences of the results of~\cite{ahr}. The main result of this paper is also used by Mautner in his geometric study of Schur--Weyl duality~\cite{mautner}.

\subsection{}

A 
precise statement is as follows.  Let $G$ be a connected reductive algebraic group over $\C$. Let $\bk$ be a commutative noetherian ring of finite global dimension.  The \emph{Springer sheaf with coefficients in $\bk$}, denoted $\Spr(\bk)$, is the perverse sheaf on the nilpotent cone $\cN$ of $G$ obtained by pushing forward the appropriate shift of the constant sheaf $\underline{\bk}$ 
along the Springer resolution.  (For full definitions,
 see Section~\ref{sect:prelim}.)  There are two actions of $\bW$ on $\Spr(\bk)$, giving rise to 
\begin{align*}
\dvarphi&: \bk[\bW] \to \End(\Spr(\bk)) &&\text{given by `Fourier transform' as in~\cite{bry, juteau},} \\
\drho&: \bk[\bW] \to \End(\Spr(\bk)) &&\text{given by `restriction' as in~\cite{bm, ahr}.}
\end{align*}
It follows from general properties of the Fourier transform that $\dvarphi$ is a ring isomorphism.  Next, let $\varepsilon : \bW \to \{ \pm 1\}$ be the sign character, and let
\[
\sgn : \bk[\bW] \to \bk[\bW]
\qquad\text{be given by}\qquad
\sgn(w) = \varepsilon(w)w\qquad\text{for }w \in \bW.
\]
The main result of the paper is the following.

\begin{thm}\label{thm:main-intro}
We have $\drho = \dvarphi \circ \sgn$.
\end{thm}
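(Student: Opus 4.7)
My plan is to lift both $\bW$-actions from $\Spr(\bk)$ to the Grothendieck sheaf $\Groth(\bk) := \pi_* \underline{\bk}_{\tfg}[\dim \fg]$ on $\fg$, where $\pi : \tfg \to \fg$ is the Grothendieck simultaneous resolution, then descend the comparison back to $\cN$. For $\drho$, this lift is tautological from the definition: the action arises from the $\bW$-torsor structure of $\tfg^{\rs} \to \fg^{\rs}$ and restricts along the closed inclusion $i : \cN \hookrightarrow \fg$ to recover the Borho--MacPherson action on $\Spr(\bk)$. For $\dvarphi$, one uses the key functorial fact that Fourier--Deligne transform on $\fg$ (via the Killing form $\fg \cong \fg^*$) sends an appropriate shift of $\Groth(\bk)$ to $i_*\Spr(\bk)$, so that any endomorphism of either side transports naturally to the other. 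This puts both actions into a common framework in which they can be compared.

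With both actions now defined on $\Groth(\bk)$, I would restrict to the regular semisimple locus $\fg^{\rs}$, where the sheaf becomes a shifted $\bW$-equivariant local system with stalks the regular module $\bk[\bW]$. On this open locus the $\drho$-action is simply left multiplication, while the $\dvarphi$-action becomes visible through a local computation involving the restriction of the Fourier transform along a Cartan subalgebra $\mathfrak{h} \subset \fg$, decomposing $\fg = \mathfrak{h} \oplus \mathfrak{h}^\perp$. The sign character $\varepsilon$ should then emerge from the orientation data picked up by Fourier transform along $\mathfrak{h}$, since the $\bW$-action on a top-degree cohomology class of $\mathfrak{h}$ is exactly by $\varepsilon$.

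To package this into a proof valid over any noetherian $\bk$, I would argue universality: the composition $\dvarphi^{-1} \circ \drho$ is a ring automorphism of $\bk[\bW]$ compatible with all base changes $\bk \to \bk'$. It therefore suffices to determine it over $\Z$, where it injects into the corresponding automorphism over $\Q$; but in the $\Q$-case the equality $\drho = \dvarphi \circ \sgn$ is Hotta's classical theorem~\cite{hotta}. Combining the universality reduction with the direct geometric identification on $\fg^{\rs}$ yields the desired equality.

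The main obstacle will be making the classical comparison argument uniform in $\bk$. Hotta's proof uses sign-isotypic decomposition of $\bW$-representations, which requires semisimplicity of $\bk[\bW]$ and so is unavailable in the modular setting. My strategy is to replace these representation-theoretic steps by purely geometric manipulations relying only on functorial properties of Fourier--Deligne transform---proper base change, compatibility with Verdier duality, and the explicit computation on linear subspaces---so that every step remains valid for an arbitrary noetherian ring $\bk$ of finite global dimension. The most delicate point will be tracking the sign as it appears from the interplay of Fourier transform with the inclusion of a Cartan, and checking that this matches the sign character of $\bW$ at the level of endomorphism rings rather than merely on characters.
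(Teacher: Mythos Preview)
Your reduction strategy in the third paragraph---show that $\dvarphi^{-1}\circ\drho$ is compatible with base change $\Z\to\bk$, then determine it over $\Z$ by injecting into $\Q$ and invoking the classical result---is exactly the architecture of the paper's proof. So at the level of the global outline you are aligned with the paper.

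There are, however, two substantive issues.

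First, your second paragraph contains a conceptual slip. Both $\drho$ and $\dvarphi$ on $\Spr(\bk)$ arise from the \emph{same} action $r$ on $\Groth(\bk)$: the monodromy action coming from the $\bW$-torsor $\tfg_\rs\to\fg_\rs$. What distinguishes them is the functor used to pass from $\Groth$ to $\Spr$---the restriction $i_\cN^*[-r]$ versus the Fourier transform. Thus there are not two actions on $\Groth$ to compare, and restricting to $\fg_\rs$ cannot detect the sign: on that locus you only ever see the single monodromy action. The paper instead detects the sign by restricting $\Spr(\bk)$ to the stalk at $0\in\cN$, which is $H^\bullet(\cB;\bk)$. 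There $\drho$ induces the classical $\bW$-action on $H^\bullet(\cB;\bk)$ while $\dvarphi$ induces the action on $H^\bullet_c(\cB;\bk)$, and the sign appears from Poincar\'e duality (the top cohomology of $\cB$ carries the sign character). Your proposed Cartan computation would have to reproduce this, and as written it does not; the Fourier transform of $\Groth$ is supported on $\cN$, not on $\fg_\rs$, so a local computation on a Cartan inside $\fg_\rs$ does not directly access the Fourier side.

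Second, you treat the base-change compatibility as routine (``universality''), but this is in fact the main technical content of the paper. The na\"ive approach via extension of scalars $(-)\otimes^L_\Z\bk$ requires checking that every isomorphism used to define $\dvarphi$ and $\drho$ (base change, composition, Fourier transform of a constant sheaf, etc.) is compatible with extension of scalars, and the paper found it cleaner to use \emph{restriction} of scalars together with a functorial reformulation: $\Spr^\bk$ is treated as a functor $\bk\textrm{-mod}\to\Perv(\cN,\bk)$, so that one can compare $\End(\Spr^\Z)$ and $\End(\Spr^\bk)$ through an intermediate object $\End(\Spr^\bk_\Z)$. This is where most of the work lies, and your proposal does not engage with it.

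Minor point: the paper works over $\C$ with the Fourier--Sato transform; Fourier--Deligne is the \'etale analogue treated separately at the end.
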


An immediate corollary is that $\drho$ is also an isomorphism, generalizing the result for $\bk = \Q$ proved by Borho and MacPherson~\cite[Th\'eor\`eme 3]{bm}.
Further consequences, concerning the Springer correspondence induced by $\dvarphi$ (or equivalently by $\drho$) and its relation to the zero weight spaces of small representations, are obtained in Section~\ref{sect:zeroweight}. 

\subsection{}
\label{ss:strategy}
The `easy' case of Theorem~\ref{thm:main-intro} is when the following condition holds:
$\bk$ embeds 
either in a $\Q$-algebra,
or in an $\F_p$-algebra where $p$ does not divide $|\bW|$.
Under this assumption, 
Theorem~\ref{thm:main-intro} can be proved by arguments that have already appeared in the $\ell$-adic setting; we explain this proof in Section~\ref{sect:thm-Q}. The main idea is a comparison with the classical $\bW$-action on the cohomology of the flag variety $\cB$ of $G$. The condition on $\bk$
is needed to ensure that $H^\bullet(\cB;\bk)$ is a faithful $\bk[\bW]$-module.
Note that $\bk=\Z$ satisfies the condition,
but the case of most interest in modular representation theory, when $\bk$ is a field of characteristic dividing $|\bW|$, does not.

We will deduce the general case from the case $\bk=\Z$. In order to do
so, one might try to use the morphism $\End(\Spr(\Z))
\to \End(\Spr(\bk))$ induced by the `extension of scalars' functor
$(-) \overset{\scriptscriptstyle L}{\otimes}_{\Z} \bk : \Db(\cN,\Z)
\to \Db(\cN,\bk)$. However, to follow this strategy one would have to check
that many isomorphisms between sheaf functors are compatible with
extension of scalars. It turns out to be more convenient to use `restriction of scalars' (or
forgetful) functors $\For_{\Z,\bk}$. One slight drawback is that there is no obvious morphism
$\End(\Spr(\Z)) \to \End(\Spr(\bk))$ induced by
$\For^{\cN}_{\Z,\bk}$. To bypass this difficulty
we regard $\Spr^{\bk}$ as a \emph{functor} from the category of
$\bk$-modules to the category of perverse sheaves on $\cN$, and we
study the ring $\End(\Spr^{\bk})$ of natural endotransformations of
this functor.  The main ideas of Springer theory can
be developed in this functorial setting. A functor is more
`rigid' than a single perverse sheaf, 
which allows us to define the desired morphism $\End(\Spr^{\Z}) \to \End(\Spr^{\bk})$ compatible with the functorial versions of $\dvarphi$ and $\drho$.
That compatibility enables us to deduce Theorem~\ref{thm:main-intro} from the $\bk = \Z$ case. 

\subsection{Outline of the paper}

Section~\ref{sect:prelim} contains the basic constructions of Springer theory in our functorial setting.
Section~\ref{sect:restrict} contains the 
argument that reduces Theorem~\ref{thm:main-intro} to the $\bk=\Z$ case. In Section~\ref{sect:thm-Q}, we explain how to prove Theorem~\ref{thm:main-intro} in the `easy' case,
thus completing the proof. Section~\ref{sect:zeroweight} presents the applications to the Springer correspondence and to small representations. Finally, Section~\ref{sect:etale} discusses an analogue of Theorem~\ref{thm:main-intro} for the \'etale topology over a finite field. Appendix~\ref{sect:isomsheaf} collects some general lemmas expressing the compatibility of various isomorphisms of sheaf functors.

\subsection{Acknowledgements}

We are grateful to Frank L\"ubeck for supplying data about small
representations in positive characteristic that supplemented \cite{LUB}.

\section{Preliminaries}
\label{sect:prelim}

\subsection{Groups and varieties}

Let $G$ be a connected reductive algebraic group over $\C$, 
with Lie algebra $\fg$.  
Let $\cB$ be the flag variety, which parametrizes Borel subgroups of $G$.  If $B \in \cB$, we let
$U_B$ be its unipotent radical.  Let $\fH$ be the universal Cartan subalgebra of $\fg$. Recall that if $B$ is a Borel subgroup of $G$, $\fH$ is canonically isomorphic to $\Lie(B)/\Lie(U_B)$.  Let $\bW$ be the universal Weyl group; 
by definition, it acts naturally on $\fH$.

Consider the vector bundle 
\[
\tfg = \{(x,B) \in \fg \times \cB \mid x \in \Lie(B) \}
\]
over $\cB$. We have a canonical morphism
\[
\pi: \tfg \to \fg
\qquad\text{given by}\qquad
(x,B) \mapsto x,
\]
the so-called Grothendieck--Springer simultaneous resolution. 

Let $\cN\subset\fg$ be the nilpotent cone.  Let also $\tcN=\pi^{-1}(\cN)$, and let $\mu:\tcN\to\cN$ be the restriction of $\pi$ to $\tcN$, the so-called Springer resolution.  
Let $i_{\cN}:\cN\to\fg$ and $i_{\tcN}:\tcN\to\tfg$ denote the closed embeddings.  Let $\fg_\rs\subset\fg$ be the open subvariety consisting of regular semisimple elements, $j_\rs : \fg_\rs \hookrightarrow \fg$ the inclusion, $\tfg_\rs := \pi^{-1}(\fg_\rs)$, and $\pi_\rs : \tfg_\rs \to \fg_\rs$ the restriction of $\pi$.

Fix, once and for all, a $G$-invariant nondegenerate bilinear form on $\fg$, and set
\[
N := (\dim \cN)/2, \qquad r := \dim \fH, \qquad d := \dim \fg = 2N+r.
\]
Then the rank of the vector bundle $\tfg\to\cB$ is $\dim B=N+r=d-N$.

\subsection{Rings and sheaves: conventions and notation}

All rings 
in the paper will be tacitly assumed to be noetherian commutative rings of finite global dimension.  If $\bk$ is such a ring, then a `$\bk$-algebra' should 
be understood to be noetherian, commutative, and of finite global dimension as well, but not necessarily finitely generated over $\bk$. Similarly, all our topological spaces will be tacitly assumed to be locally compact and of finite $c$-soft dimension in the sense of \cite[Exercise~II.9]{kas}. 

Let $\bk\allmod$ be the 
category of $\bk$-modules.  If $X$ is a topological space,
we write $\Sh(X,\bk)$ for the 
category of sheaves of $\bk$-modules on $X$, and $\Db(X,\bk)$ for its bounded derived category.  
(The objects in these categories are not required to be constructible.)  Given $M \in \bk\allmod$, let $\ub M_X$ (or simply $\ub M$) 
be the associated constant sheaf on $X$. 
We regard this operation as an exact functor
\begin{equation}
\label{eqn:constant-sheaf}
\ubb_X: \bk\allmod \to \Sh(X,\bk).
\end{equation}

If $f : X \to Y$ is a continuous map, we have (derived) functors
\[
f_*, f_! : \Db(X,\bk) \to \Db(Y,\bk) \qquad \text{and} \qquad f^*, f^! : \Db(Y,\bk) \to \Db(X,\bk).
\]
We will denote the nonderived versions of $f_!$ and $f^*$ by $\uo f_!$ and $\uo f^*$, respectively.

If $f : X \to Y$ is a continuous map, there exists an obvious isomorphism of functors which will be denoted by $\Const$:
\[
f^* \circ \ubb_Y \cong \ubb_X,
\]

If we have continuous maps of topological spaces
$X \xrightarrow{f} Y \xrightarrow{g} Z$
and if we let $h = g \circ f$, there are natural isomorphisms which will be denoted $\Co$:
\[
g_! \circ f_! \simto h_!
\qquad\text{and}\qquad
f^* \circ g^* \simto h^*.
\]

Finally, if we have a cartesian square:
\[
\xymatrix@R=0.5cm{
W \ar[r]^{g'} \ar[d]_{f'} \ar@{}[dr]|{\square} & X \ar[d]^{f} \\
Y \ar[r]_g & Z}
\]
we will denote by $\BC$ the base-change isomorphism
\[
g^* \circ f_! \simto f'_! \circ g'{}^*.
\]

\subsection{Change of rings}
\label{subsect:changerings}

If $\bk'$ is a $\bk$-algebra, we have exact forgetful functors
\[
\Formod = \Formod_{\bk,\bk'}: \bk'\allmod \to \bk\allmod
\qquad\text{and}\qquad
\For^X_{\bk,\bk'}: \Sh(X,\bk') \to \Sh(X,\bk).
\]
The derived functor of the latter will be denoted by the same symbol.

If $f: X \to Y$ is a continuous map of topological spaces, we claim that there is a natural isomorphism
\begin{equation}\label{eqn:for_!}
\For^Y_{\bk,\bk'} \circ f_! \simto f_! \circ \For^X_{\bk,\bk'}.
\end{equation}
To construct this isomorphism, we first note that the nonderived analogue $\For^Y_{\bk,\bk'} \circ \uo f_! \simto \uo f_! \circ \For^X_{\bk,\bk'}$ is obvious. The left hand side of \eqref{eqn:for_!} is canonically isomorphic to the right derived functor of $\For^Y_{\bk,\bk'} \circ \uo f_!$. Then, since $\For^X_{\bk,\bk'}$ takes $c$-soft $\bk'$-sheaves to $c$-soft $\bk$-sheaves (which are acyclic for $f_!$), the right hand side of \eqref{eqn:for_!} is canonically isomorphic to the right derived functor of $\uo f_! \circ \For^X_{\bk,\bk'}$ by \cite[Proposition~1.8.7]{kas}. The claim follows.
By a similar 
argument, we also obtain a canonical isomorphism
\begin{equation}\label{eqn:for^*}
\For^Y_{\bk,\bk'} \circ f^* \simto f^* \circ \For^X_{\bk,\bk'}.
\end{equation}
Finally, we have an obvious isomorphism
\begin{equation}\label{eqn:for-constant}
\For^X_{\bk,\bk'} \circ \ubb_X \simto \ubb_X \circ \Formod_{\bk,\bk'}.
\end{equation}

Appendix \ref{sect:isomsheaf} collects 
lemmas expressing the compatibility of these isomorphisms with each other and with various other 
isomorphisms that will be introduced later.

\subsection{`Perverse' sheaves}
\label{subsect:perverse}

Let $X$ be a complex algebraic variety, equipped with a 
stratification by locally closed smooth subvarieties.  
By~\cite[Corollaire~2.1.4]{bbd}, there is a well-defined `perverse $t$-structure' on $\Db(X,\bk)$ (for the middle perversity).  Its heart will be denoted $\Perv(X,\bk)$.  
Not all objects of $\Perv(X,\bk)$ are perverse sheaves in the conventional sense, because they are not required to be constructible.  Note that this category depends on the choice of stratification.

In particular, we equip $\cN$ with the stratification by $G$-orbits, and we equip $\fg$ with the Lie algebra analogue of the stratification given in~\cite[\S 3]{lusicc}.
In this stratification, the regular semisimple set forms a single stratum.  Each $G$-orbit in $\cN$ is also a stratum of $\fg$, so the functor $i_{\cN*}: \Db(\cN,\bk) \to \Db(\fg,\bk)$ restricts to an exact functor $i_{\cN*}: \Perv(\cN,\bk) \to \Perv(\fg,\bk)$.

\subsection{Springer and Grothendieck functors}

We define two additive functors: 
\begin{align*}
\Spr^{\bk} &: \bk\allmod \to \Perv(\cN,\bk) &&\text{by} & \Spr(M) &= \mu_!(\ub M_{\tcN})[2N], \\
\Groth^{\bk} &: \bk\allmod \to \Perv(\fg,\bk) &&\text{by} & \Groth(M) &= \pi_!(\ub M_{\tfg})[d].
\end{align*}
(Sometimes, for simplicity we write $\Spr$, $\Groth$ instead of $\Spr^{\bk}$, $\Groth^{\bk}$.)
The fact that these functors take values in the heart of the perverse $t$-structure follows 
from the fact that $\mu$ is semismall and $\pi$ is small.  
These functors
are exact functors of abelian categories. 
For any $M$ in $\bk\allmod$ we have a canonical isomorphism
\begin{equation}
\label{eqn:Groth-IC}
\Groth(M) \cong (j_\rs)_{!*} \bigl( (\pi_{\rs})_! \ub M_{\tfg_{\rs}} [d] \bigr).
\end{equation}

\begin{lem}\label{lem:groth-res}
The functor
$i_\cN^* \circ \Groth[-r] : \bk\allmod \to \Db(\cN,\bk)$
takes values in $\Perv(\cN,\bk)$. Moreover, there exists a natural isomorphism 
\begin{equation}\label{eqn:groth-res}
i_\cN^* \circ \Groth[-r] \simto \Spr.
\end{equation}
\end{lem}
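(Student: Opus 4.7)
The plan is to exhibit the isomorphism as a concrete composition of base change and the "constant sheaf pullback" isomorphisms from Section~\ref{subsect:changerings}, using the cartesian square
\[
\xymatrix@R=0.5cm{
\tcN \ar[r]^{i_{\tcN}} \ar[d]_{\mu} \ar@{}[dr]|{\square} & \tfg \ar[d]^{\pi} \\
\cN \ar[r]_{i_{\cN}} & \fg
}
\]
which is cartesian because $\tcN = \pi^{-1}(\cN)$ by definition. Unwinding definitions, $i_\cN^* \Groth(M)[-r] = i_\cN^* \pi_! \ub M_{\tfg}[d-r]$, and the target is $\Spr(M) = \mu_! \ub M_{\tcN}[2N]$, with $d - r = 2N$. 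So the thing to produce is a natural isomorphism $i_\cN^* \pi_! \ub M_{\tfg} \simto \mu_! \ub M_{\tcN}$.

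First I would apply the base change isomorphism $\BC$ to get $i_\cN^* \circ \pi_! \simto \mu_! \circ i_{\tcN}^*$, and then apply $\Const$ to identify $i_{\tcN}^* \ub M_{\tfg} \simto \ub M_{\tcN}$. Composing these (and shifting by $2N = d - r$) gives a candidate isomorphism
\[
i_\cN^* \Groth(M)[-r] \;\xrightarrow{\BC}\; \mu_! i_{\tcN}^* \ub M_{\tfg}[2N] \;\xrightarrow{\Const}\; \mu_! \ub M_{\tcN}[2N] \;=\; \Spr(M).
\]
Since both $\BC$ and $\Const$ are natural in the sheaf variable, and since the assignment $M \mapsto \ub M_{\tfg}$ is functorial (being the composition of \eqref{eqn:constant-sheaf} with a fixed base change), the composite is natural in $M$, yielding the desired natural isomorphism of functors $\bk\allmod \to \Db(\cN,\bk)$.

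Once the isomorphism is constructed, the first assertion—that $i_\cN^* \circ \Groth[-r]$ lands in $\Perv(\cN,\bk)$—is automatic, since $\Spr$ does by construction (as noted just before the statement, using semismallness of $\mu$). There is no real obstacle here: the argument is essentially formal once one has the package of isomorphisms $\BC$ and $\Const$ in hand. The only mildly delicate point—and the one worth double-checking—is verifying that the resulting map is genuinely natural as a transformation between functors defined on $\bk\allmod$ (as opposed to merely an isomorphism for each fixed $M$); this is exactly the sort of bookkeeping that the compatibility lemmas in Appendix~\ref{sect:isomsheaf} are set up to handle, and I would invoke them rather than reprove naturality by hand.
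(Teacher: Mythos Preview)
Your proof is correct and follows essentially the same route as the paper: the same cartesian square, then $\BC$ followed by $\Const$, with the shift identity $d-r=2N$. One small remark: the naturality in $M$ is immediate from the fact that $\BC$ and $\Const$ are natural transformations of sheaf functors, so there is no need to invoke Appendix~\ref{sect:isomsheaf} here---those lemmas concern compatibility with the change-of-rings functors $\For_{\bk,\bk'}$, not naturality in the module variable.
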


\begin{proof}
Consider the cartesian diagram
\[
\vcenter{\xymatrix@R=0.6cm{
\tcN \ar[r]^-{i_{\tcN}} \ar[d]_-{\mu} \ar@{}[dr]|{\square} & \tfg \ar[d]^-{\pi} \\
\cN \ar[r]^{i_\cN} & \fg.
}}
\]
Then for any $M$ in $\bk\allmod$ we have functorial isomorphisms
\[
i_\cN^* \Groth(M)[-r] = i_\cN^* \pi_!(\ub M_{\tfg})[2N] \overset{\BC}{\cong} \mu_! i_{\tcN}^*(\ub M_{\tfg})[2N] \overset{\Const}{\cong} \mu_! (\ub M_{\tcN})[2N],
\]
which proves the claim.
\end{proof}

\subsection{Springer and Grothendieck functors and change of rings}
\label{ss:Spr-Groth-change-rings}

If $\bk'$ is a $\bk$-algebra, then we also define
\begin{align*}
\Spr^{\bk'}_{\bk} = \Spr^{\bk} \circ \Formod_{\bk,\bk'} &: \bk'\allmod \to \Perv(\cN,\bk), \\
\Groth^{\bk'}_{\bk} = \Groth^{\bk} \circ \Formod_{\bk,\bk'} &: \bk'\allmod \to \Perv(\fg,\bk).
\end{align*}
Using isomorphisms \eqref{eqn:for_!} and \eqref{eqn:for-constant} we obtain canonical isomorphisms of functors
\begin{equation}
\label{eqn:Spr-k-k'}
\Spr^{\bk'}_{\bk} \cong \For^{\cN}_{\bk,\bk'} \circ \Spr^{\bk'}, \qquad
\Groth^{\bk'}_{\bk} \cong \For^{\fg}_{\bk,\bk'} \circ \Groth^{\bk'}.
\end{equation}
Hence we can define two isomorphisms of functors
\begin{equation}
\label{eqn:groth-res-k-k'}
\phi : i_\cN^* \circ \Groth^{\bk'}_{\bk}[-r] \simto \Spr^{\bk'}_{\bk}, \quad \phi' : i_\cN^* \circ \Groth^{\bk'}_{\bk}[-r] \simto \Spr^{\bk'}_{\bk}
\end{equation}
in such a way that the following diagrams commute:
\[
\xymatrix@R=0.6cm{
 i^*_{\cN}  \Groth^{\bk} \Formod_{\bk,\bk'} [-r] \ar@{=}[d] \ar[r]^-{\eqref{eqn:groth-res}^{\bk}} & \Spr^{\bk} \Formod_{\bk,\bk'} \ar@{=}[d] \\
i^*_{\cN} \Groth^{\bk'}_{\bk}[-r] \ar[r]^-{\phi}
  & \Spr^{\bk'}_{\bk} } 
  \quad
\xymatrix@R=0.6cm{
\For^{\cN}_{\bk,\bk'} i^*_{\cN} \Groth^{\bk'}[-r] \ar[r]^-{\eqref{eqn:groth-res}^{\bk'}} \ar[d]_-{\eqref{eqn:for^*} \&
\eqref{eqn:Spr-k-k'}
}
  & \For^{\cN}_{\bk,\bk'} \Spr^{\bk'} \ar[d]^-{\eqref{eqn:Spr-k-k'}} \\
i^*_{\cN} \Groth^{\bk'}_{\bk}[-r] \ar[r]^-{\phi'}
  & \Spr^{\bk'}_{\bk} }
\]

Using Lemma \ref{lem:bc-for} and Lemma \ref{lem:const-for}, one can easily check that $\phi=\phi'$.
Hence from now on we will use the reference \eqref{eqn:groth-res-k-k'} to denote either $\phi$ or $\phi'$.

\subsection{Fourier--Sato transform}
\label{ss:fourier-sato-defn}

Let $Y$ be a topological space, and $p: E \to Y$ a complex vector bundle.  Consider the $\C^\times$-action on the fibers of $p$ given by scaling.  Recall that a sheaf on $E$ (resp.~an object of $\Db(E,\bk)$) is said to be \emph{conic} if its restrictions to $\C^\times$-orbits are locally constant (resp.~its cohomology sheaves are co\-nic).  Let $\Dcon(E,\bk) \subset \Db(E,\bk)$ be the full 
subcategory consisting of conic objects.

Let $\check p: E^* \to Y$ be the dual vector bundle. (Here $E^*$ is the dual vector bundle 
as a \emph{complex} vector bundle. However it can also be regarded as the dual to the \emph{real} vector bundle $E$ via the pairing $E \times E^* \to \mathbb{R}$ sending $(x,y)$ to $\mathrm{Re}(\langle x,y \rangle)$.)  Let
\[
Q = \{ (x,y) \in E \times_Y E^* \mid \mathrm{Re}(\langle x, y\rangle) \le 0 \} \subset E \times_Y E^*.
\]
Let $q: Q \to E$ and $\check q: Q \to E^*$ be the obvious projection maps.  The \emph{Fourier--Sato transform} is defined to be the functor
\begin{equation}\label{eqn:fourier-defn}
\bT_E : \Dcon(E,\bk) \simto \Dcon(E^*,\bk)
\qquad\text{given by}\qquad
\bT_E = \check q_! q^*[\rk E].
\end{equation}
This functor is an equivalence of categories;  see \cite{kas, bry}.  Note that the shift by $\rk E$ does not appear in the definition in~\cite{kas}.  Thus, with the notation of \emph{loc.}~\emph{cit.}, we have $\bT_E(M) = M^{\wedge}[\rk E]$.

If $\bk'$ is a $\bk$-algebra, from the definition~\eqref{eqn:fourier-defn}, we obtain a natural isomorphism
\begin{equation}\label{eqn:for-fourier}
\For^{E^*}_{\bk,\bk'} \circ \bT_E \simto \bT_E \circ \For^E_{\bk,\bk'}
\end{equation}
by combining~\eqref{eqn:for_!} and~\eqref{eqn:for^*}.

We will use several compatibility properties of the Fourier--Sato transform, which we recall now. (Lemmas asserting the compatibility between the various isomorphisms we introduce are to be found in Appendix~\ref{sect:isomsheaf}.)
Let $y: Y \to E^*$ be the embedding of the zero section. 
In \S\ref{ss:fourier-constant} we will construct a natural isomorphism
\begin{equation}\label{eqn:fourier-con}
\bT_E\circ \ubb_E \simto y_!\circ \ubb_Y[-\rk E].
\end{equation}
One of the steps of this construction is an isomorphism proved in Lemma \ref{lem:cohc}:
\begin{equation}
\label{eqn:cohc}
p_! \ubb_E \cong \ubb_Y[-2\rk E].
\end{equation}

Let $g: Y' \to Y$ be a continuous map of topological spaces, and form the cartesian squares
\[
\xymatrix@R=0.6cm{
E' = Y' \times_Y E \ar[r]^-f' \ar[d] \ar@{}[dr]|{\square} & E \ar[d] \\
Y' \ar[r]^g & Y}
\qquad
\xymatrix@R=0.6cm{
(E')^* = Y' \times_Y E^* \ar[r]^-{f'} \ar[d] \ar@{}[dr]|{\square} & E^* \ar[d] \\
Y' \ar[r]^g & Y}
\]
According to~\cite[Proposition~3.7.13]{kas}, there is a natural isomorphism
\begin{equation}\label{eqn:fourier_!}
\bT_E \circ f_! \simto f'_! \circ \bT_{E'}.
\end{equation}

Next, let $E_1 \to Y$ and $E_2 \to Y$ be two complex vector bundles, and let $\phi: E_1 \to E_2$ be a map of vector bundles.  Let $\ut \phi: E_2^* \to E_1^*$ denote the dual map.  According to~\cite[Proposition~3.7.14]{kas}, there is a natural isomorphism
\begin{equation}\label{eqn:fourier^*}
(\ut \phi)^* \circ \bT_{E_1} [-\rk E_1] \simto \bT_{E_2} \circ \phi_! [-\rk E_2].
\end{equation}

A particularly important special case arises when $E_2=Y$ is the trivial vector bundle over $Y$, so that $\phi$ is the bundle projection $p:E_1=E\to Y$. Then the dual map $\ut\phi$ is the embedding of the zero section $y:Y\to E^*$, so \eqref{eqn:fourier^*} becomes
\begin{equation}\label{eqn:zero-section}
y^* \circ \bT_{E} \simto p_! [\rk E]. 
\end{equation}
Here we have identified the Fourier--Sato transform for the trivial vector bundle with the identity functor. 

We will use in particular $\bT_E$ in the case $E=\fg$ (considered as a vector bundle over a point). Since $\fg$ is equipped with a nondegenerate bilinear form, we can identify it with its dual, and regard $\bT_\fg$ as an autoequivalence
$\bT_\fg : \Dcon(\fg,\bk) \simto \Dcon(\fg,\bk)$.

\subsection{Fourier--Sato transform and Springer and Grothendieck functors}
\label{ss:fourier-sato}

\begin{lem}
\label{lem:groth-fourier}
There is a natural isomorphism of functors
\begin{equation}\label{eqn:groth-fourier}
\bT_\fg \circ \Groth \cong i_{\cN!} \circ \Spr.
\end{equation}
\end{lem}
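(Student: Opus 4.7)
The plan is to realize both $\Groth(M)$ and $i_{\cN!}\Spr(M)$ as proper pushforwards to $\fg$ of sheaves living on the trivial vector bundle $\fg \times \cB \to \cB$, and to show that the two sheaves on $\fg \times \cB$ are exchanged, up to a shift, by the relative Fourier--Sato transform $\bT^\cB$ over $\cB$. The compatibility \eqref{eqn:fourier_!} applied to the map $g : \cB \to \mathrm{pt}$ (with $E = \fg$) then transports the isomorphism from $\fg \times \cB$ down to an isomorphism of objects on $\fg$.

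Concretely, I would factor $\pi$ as $\tfg \xrightarrow{\iota} \fg \times \cB \xrightarrow{p_1} \fg$, where $\iota$ realizes $\tfg$ as a subbundle of the trivial bundle over $\cB$, so that $\Groth(M) = (p_1)_! \iota_!(\ub M_{\tfg})[d]$. Writing $\iota' : \tcN \hookrightarrow \fg \times \cB$ for the analogous inclusion, one has $p_1 \circ \iota' = i_\cN \circ \mu$, hence $i_{\cN!}\Spr(M) = (p_1)_! \iota'_!(\ub M_{\tcN})[2N]$. After applying \eqref{eqn:fourier_!} the lemma reduces to producing a natural isomorphism
\[
\bT^\cB\bigl( \iota_!(\ub M_{\tfg}) \bigr)[d] \cong \iota'_!(\ub M_{\tcN})[2N]
\]
of conic objects on $\fg \times \cB$.

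To construct this isomorphism I would combine two ingredients. First, \eqref{eqn:fourier-con} applied to the bundle $\tfg \to \cB$ yields $\bT_{\tfg/\cB}(\ub M_{\tfg}) \cong y_!(\ub M_\cB)[-\rk \tfg]$, where $y : \cB \hookrightarrow \tfg^*$ is the zero section. Second, \eqref{eqn:fourier^*} applied to the vector bundle map $\iota : \tfg \hookrightarrow \fg \times \cB$, whose transpose $\ut\iota : \fg \times \cB \twoheadrightarrow \tfg^*$ is the quotient projection induced by the chosen form, expresses $\bT^\cB \circ \iota_!$ in terms of $(\ut\iota)^* \circ \bT_{\tfg/\cB}$ up to a shift. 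The geometric heart of the argument is the identification of $\ker(\ut\iota)$ with the orthogonal subbundle of $\tfg$ inside $\fg \times \cB$: at each Borel subgroup $B$ one has $\mathrm{Lie}(B)^\perp = \mathrm{Lie}(U_B)$, so that this kernel is precisely $\tcN$. A base change of $y_!$ along $\ut\iota$ then identifies $(\ut\iota)^* y_!(\ub M_\cB)$ with $\iota'_!(\ub M_{\tcN})$, and combining the shifts using $\rk \tfg = d - N$ and $\rk(\fg \times \cB) = d$ produces exactly $[2N]$ on the right-hand side.

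The main obstacle is not conceptual but organizational: the proof requires threading together \eqref{eqn:fourier_!}, \eqref{eqn:fourier^*}, \eqref{eqn:fourier-con}, the base-change isomorphism $\BC$, and the identification of the constant sheaf with the pullback of the constant sheaf $\Const$, while verifying naturality in $M$ and the compatibility of the various shifts at each stage. The compatibility lemmas collected in Appendix~\ref{sect:isomsheaf} are designed exactly for this bookkeeping, so that the proof becomes a diagrammatic assembly of previously recorded isomorphisms rather than anything requiring genuinely new input.
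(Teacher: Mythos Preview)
Your proposal is correct and follows essentially the same route as the paper's proof: factor $\pi$ through $\fg\times\cB$, apply \eqref{eqn:fourier_!} to pass to the relative Fourier transform over $\cB$, then use \eqref{eqn:fourier^*} for the subbundle inclusion $\tfg\hookrightarrow\fg\times\cB$ together with \eqref{eqn:fourier-con} on $\tfg\to\cB$, and finish with the base-change identification of $(\ut\iota)^*y_!$ with $\iota'_!$ coming from the cartesian square encoding $\mathrm{Lie}(B)^\perp=\mathrm{Lie}(U_B)$. The only differences are notational and in the order of presentation; the paper records the same cartesian square and the same chain of isomorphisms, with the same shift bookkeeping.
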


\begin{proof}
Consider $\fg\times\cB$ as a vector bundle over $\cB$. The functor $\bT_{\fg \times \cB}$ can be regarded as an autoequivalence of $\Dcon(\fg \times \cB,\bk)$.  Let $x : \tfg \hookrightarrow \fg \times \cB$ be the inclusion, which is a morphism of vector bundles over $\cB$, and let $\ut x : \fg \times \cB \to (\tfg)^*$ be the dual morphism. Let $y : \cB \hookrightarrow (\tfg)^*$ be the inclusion of the zero section. Finally, let $f:\fg \times \cB \to \fg$ be the projection. 
We observe that the following diagram is cartesian, where $z$ is the natural projection:
\[
\xymatrix@R=0.6cm{
\tcN \ar[r]^-{z} \ar[d]_-{x \circ i_{\tcN}} \ar@{}[dr]|{\square} & \cB\ar[d]^-{y} \\
\fg \times \cB \ar[r]^-{\ut x} & (\tfg)^*.
}
\]

Now we are in a position to prove the lemma. For $M$ in $\bk\allmod$ we have
\[
\bT_\fg \Groth(M) = \bT_\fg \pi_! (\ub M_{\tfg})[d] \overset{\Co}{\cong} \bT_\fg f_!x_!(\ub M_{\tfg})[d].
\]
We deduce isomorphisms
\begin{multline*}
\bT_\fg \Groth(M) \overset{\eqref{eqn:fourier_!}}{\cong} f_! \bT_{\fg \times \cB} x_!(\ub M_{\tfg})[d] \overset{\eqref{eqn:fourier^*}}{\cong} f_! (\ut x)^* \bT_{\tfg}(\ub M_{\tfg})[d+N] \\
\overset{\eqref{eqn:fourier-con}}{\cong}  f_! (\ut x)^* y_!(\ub M_\cB)[2N] \overset{\BC}{\cong} f_!(x \circ i_{\tcN})_! z^*(\ub M_\cB)[2N].
\end{multline*}
Using the $\Const$ isomorphism $z^*(\ub M_\cB) \cong \ub M_{\tcN}$ and the fact that $f \circ x \circ i_{\tcN} = i_{\cN}\circ\mu$, we deduce the isomorphism of the lemma.
\end{proof}

\subsection{Fourier--Sato transform, Springer and Grothendieck functors, and change of rings}
\label{ss:fourier-spr-groth-change-rings}

Let $\bk'$ be a $\bk$-algebra.
Recall the functors $\Groth^{\bk'}_{\bk}$ and $\Spr^{\bk'}_{\bk}$ of \S\ref{ss:Spr-Groth-change-rings}. As in \emph{loc.}~\emph{cit.}~there exist two natural isomorphisms of functors
\begin{equation}
\label{eqn:morphism-Fourier-k-k'}
\psi : \bT_\fg \circ \Groth^{\bk'}_{\bk} \simto i_{\cN!} \circ \Spr^{\bk'}_{\bk}, \quad \psi' : \bT_\fg \circ \Groth^{\bk'}_{\bk} \simto i_{\cN!} \circ \Spr^{\bk'}_{\bk}
\end{equation}
defined in such a way that the following diagrams commute:
\[
\xymatrix@R=0.6cm{
\bT_\fg  \Groth^{\bk} \Formod_{\bk,\bk'} \ar@{=}[d] \ar[r]^-{\eqref{eqn:groth-fourier}_\bk} & i_{\cN!} \Spr^{\bk} \Formod_{\bk,\bk'} \ar@{=}[d] \\
\bT_\fg \Groth^{\bk'}_{\bk} \ar[r]^-{\psi}
  & i_{\cN!} \Spr^{\bk'}_{\bk} } 
  \
\xymatrix@C=1cm@R=0.6cm{
\For^{\fg}_{\bk,\bk'} \bT_\fg \Groth^{\bk'} \ar[r]^-{\eqref{eqn:groth-fourier}_{\bk'}} \ar[d]_-{\eqref{eqn:for-fourier} \& 
\eqref{eqn:Spr-k-k'}}
  & \For^{\fg}_{\bk,\bk'} i_{\cN!} \Spr^{\bk'} \ar[d]_-{\eqref{eqn:for_!} \& \eqref{eqn:Spr-k-k'}} \\
\bT_\fg \Groth^{\bk'}_{\bk} \ar[r]^-{\psi'}
  & i_{\cN!} \Spr^{\bk'}_{\bk} }
\]

Using Lemmas~\ref{lem:bc-for}, ~\ref{lem:fourier-for_!}, \ref{lem:fourier-for^*}, and \ref{lem:fourier-constant}, one can easily check that $\psi=\psi'$.
Hence from now on we will use the reference \eqref{eqn:morphism-Fourier-k-k'} to denote either $\psi$ or $\psi'$.

\section{Endomorphisms under change of rings}
\label{sect:restrict}

\subsection{Functors commuting with direct sums}

Let $\sC$ be an abelian category which admits arbitrary direct sums. Recall that a functor $F : \bk\allmod \to \sC$ \emph{commutes with direct sums} if for any set $I$ and any collection $(M_i)_{i \in I}$ of $\bk$-modules the following natural morphism is an isomorphism:
\begin{equation}
\label{eqn:limit-F}
\bigoplus_{i \in I} F(M_i) \to F\bigl( \bigoplus_{i \in I} M_i \bigr).
\end{equation}

\begin{lem}\label{lem:end-functor}
Let $F : \bk\allmod \to \sC$ be an exact 
functor which commutes with direct sums, and let $\End(F)$ be the 
algebra of endomorphisms of $F$ in the category of functors from $\bk\allmod$ to $\sC$. Then the algebra morphism
\[
\mathrm{ev}_\bk :
\End(F) \to \End_{\sC}(F(\bk))
\]
given by evaluation at $\bk$ is an isomorphism.
\end{lem}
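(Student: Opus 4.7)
The plan is to exploit the observation that $F$ is determined, up to natural isomorphism, by its value on the single module $\bk$, thanks to its exactness and its compatibility with direct sums. Indeed, every $\bk$-module $M$ admits a free presentation $\bk^{(J)} \xrightarrow{f} \bk^{(I)} \to M \to 0$, and then $F(M) = \mathrm{coker}(F(f))$ by the exactness of $F$, with $F(\bk^{(I)}) \simeq \bigoplus_i F(\bk)$ via~\eqref{eqn:limit-F}. The strategy is to reduce everything to the behavior on free modules, and ultimately on $\bk$ itself.

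For injectivity of $\mathrm{ev}_\bk$, I would suppose $\eta \in \End(F)$ with $\eta_\bk = 0$. Naturality of $\eta$ with respect to the canonical inclusions $\bk \hookrightarrow \bk^{(I)}$, combined with the direct-sum identification, forces $\eta_{\bk^{(I)}} = 0$ for every set $I$. For an arbitrary $M$, I would choose a free presentation as above; by exactness of $F$ the induced map $F(\bk^{(I)}) \twoheadrightarrow F(M)$ is an epimorphism, and naturality of $\eta$ with respect to this epimorphism then yields $\eta_M = 0$.

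For surjectivity, given $\alpha \in \End_\sC(F(\bk))$, I would first set $\tilde\alpha_{\bk^{(I)}} := \bigoplus_i \alpha$ under the direct-sum isomorphism. The key step is to verify naturality of $\tilde\alpha$ on every $\bk$-linear map $g : \bk^{(J)} \to \bk^{(I)}$ between free modules. Decomposing $g$ via its matrix entries $(a_{ij}) \in \bk$ using the additivity of $F$ and the compatibility of $F$ with direct sums, this reduces to the single case where $g$ is multiplication by an element $a \in \bk$. I would then define $\tilde\alpha_M$ for an arbitrary $M$ as the endomorphism induced on the cokernel in a chosen free presentation. Independence of the choice of presentation, and functoriality of the assignment $M \mapsto \tilde\alpha_M$, would both follow from the possibility of lifting morphisms of presentations through the projective free modules $\bk^{(I)}$.

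The main obstacle is precisely the identity $F(a) \circ \alpha = \alpha \circ F(a)$ needed for the naturality reduction above. This expresses the fact that the ring homomorphism $\bk \to \End_\sC(F(\bk))$, $a \mapsto F(a \cdot \mathrm{id}_\bk)$, lands in the center of its target. In the situation at hand, where $\sC$ is a $\bk$-linear abelian category and $F$ is manifestly a $\bk$-linear functor, $F(a \cdot \mathrm{id}_\bk)$ coincides with scalar multiplication by $a$ on $F(\bk)$, which is evidently central in $\End_\sC(F(\bk))$.
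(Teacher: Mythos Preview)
Your approach is essentially the same as the paper's: injectivity via epimorphisms from free modules, and surjectivity by defining the natural transformation on free modules first and then descending to cokernels of free presentations. You are in fact more explicit than the paper about the key commutativity $F(a\cdot\id_\bk)\circ\alpha=\alpha\circ F(a\cdot\id_\bk)$, which the paper uses tacitly when it writes down its defining diagram for $\eta_M$; as you correctly note, this is exactly the statement that the image of $\bk$ is central in $\End_\sC(F(\bk))$, which holds because in the intended applications $\sC$ and $F$ are $\bk$-linear.
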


\begin{proof}
First we note that if $\eta$ is in $\End(F)$, then for any set $I$ and any collection $(M_i)_{i \in I}$ of $\bk$-modules, the following diagram commutes:
\[
\xymatrix@C=1.5cm@R=0.6cm{
\bigoplus_{i \in I} F(M_i) \ar[d]_-{\eqref{eqn:limit-F}}^-{\wr} \ar[r]^-{\oplus_i \eta_{M_i}} & \bigoplus_{i \in I} F(M_i) \ar[d]^-{\eqref{eqn:limit-F}}_-{\wr} \\
F\bigl( \bigoplus_{i \in I} M_i \bigr) \ar[r]^-{\eta_{\oplus_i M_i}} & F\bigl( \bigoplus_{i \in I} M_i \bigr).
}
\]

Let us prove injectivity of our morphism. Assume that $\eta_\bk=0$. Let $M$ be in $\bk\allmod$, and choose a set $I$ and a surjection $\bk^{\oplus I} \twoheadrightarrow M$. Then as $F$ is exact we have a surjection $F(\bk^{\oplus I}) \twoheadrightarrow F(M)$. Using the diagram above one checks that $\eta_{\bk^{\oplus I}}=0$, which implies that $\eta_M=0$.

Now we prove surjectivity. Let $\zeta : F(\bk) \to F(\bk)$ be a morphism in $\sC$. If $M$ is in $\bk\allmod$ we choose a presentation $\bk^{\oplus J} \xrightarrow{f_1} \bk^{\oplus I} \xrightarrow{f_2} M \to 0$
for some sets $I$ and $J$. The image under $F$ of this presentation is an exact sequence. 
Then we define $\eta_M$ as the unique morphism which makes the following diagram commutative, where we use the canonical isomorphisms $F(\bk^{\oplus I}) \cong F(\bk)^{\oplus I}$ and $F(\bk^{\oplus J}) \cong F(\bk)^{\oplus J}$: 
\[
\xymatrix@C=1.5cm@R=0.6cm{
F(\bk)^{\oplus J} \ar[r]^-{F(f_1)} \ar[d]_-{\zeta^{\oplus J}} & F(\bk)^{\oplus I} \ar[d]_-{\zeta^{\oplus I}} \ar@{->>}[r]^-{F(f_2)} & F(M) \ar[d]^-{\eta_M} \\
F(\bk)^{\oplus J} \ar[r]^-{F(f_1)} & F(\bk)^{\oplus I} \ar@{->>}[r]^-{F(f_2)} & F(M).
}
\]
To check that $\eta_M$ does not depend on the presentation, and that it defines a morphism of functors, is an easy exercise left to the reader.
\end{proof}

\subsection{Endomorphisms}

As above we denote by $\End(\Spr)$ the $\bk$-algebra of endomorphisms of the functor $\Spr$,
 and likewise for $\End(\Groth)$. 

\begin{lem}\label{lem:end-gen}
The following $\bk$-algebra homomorphisms are isomorphisms:
\begin{gather}
\label{eqn:end-spr}
\End(\Spr) \xrightarrow{\mathrm{ev}_\bk} \End_{\Perv(\cN,\bk)}(\Spr(\bk)), \\
\label{eqn:end-groth}
\End(\Groth) \xrightarrow{\mathrm{ev}_\bk} \End_{\Perv(\fg,\bk)}(\Groth(\bk)) \xrightarrow{j_\rs^*} \End_{\Perv(\fg_\rs,\bk)}(\Groth(\bk)|_{\fg_\rs}).
\end{gather}
\end{lem}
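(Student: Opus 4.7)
The bijection \eqref{eqn:end-spr} and the first arrow in \eqref{eqn:end-groth} will follow from Lemma~\ref{lem:end-functor} applied to $F=\Spr$ and $F=\Groth$, while the second arrow in \eqref{eqn:end-groth} will follow from \eqref{eqn:Groth-IC}, which presents $\Groth(\bk)$ as an intermediate extension from $\fg_\rs$.

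To invoke Lemma~\ref{lem:end-functor} with $\sC = \Perv(\cN,\bk)$ or $\Perv(\fg,\bk)$, the exactness of $\Spr$ and $\Groth$ has already been recorded, so the only point to check is that these functors commute with arbitrary direct sums. This reduces to two inputs: (i)~the constant sheaf functor $\ubb_X$ of \eqref{eqn:constant-sheaf} commutes with direct sums, which can be verified on stalks; and (ii)~since $\mu$ and $\pi$ are proper morphisms with fibres of bounded cohomological dimension, their derived pushforwards $\mu_! = \mu_*$ and $\pi_! = \pi_*$ commute with arbitrary direct sums in the bounded derived category. The cohomological shifts $[2N]$ and $[d]$ cause no difficulty, so $\Spr$ and $\Groth$ commute with arbitrary direct sums, and Lemma~\ref{lem:end-functor} produces the evaluation isomorphisms. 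I expect this step to be the main obstacle, since it is sensitive to the non-standard feature of the present formalism that objects of $\Perv$ are not required to be constructible (so that $\Perv$ has arbitrary direct sums at all) and requires properness of $\mu$, $\pi$.

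For the second arrow in \eqref{eqn:end-groth}, by \eqref{eqn:Groth-IC} we may write $\Groth(\bk) \cong (j_\rs)_{!*}\bigl(\Groth(\bk)|_{\fg_\rs}\bigr)$. The desired isomorphism is then a special case of the standard fact that, for any $\mathcal{F}, \mathcal{G} \in \Perv(\fg_\rs,\bk)$,
\[
j_\rs^* : \Hom_{\Perv(\fg,\bk)}\bigl((j_\rs)_{!*}\mathcal{F},\, (j_\rs)_{!*}\mathcal{G}\bigr) \simto \Hom_{\Perv(\fg_\rs,\bk)}(\mathcal{F},\mathcal{G})
\]
is an isomorphism: injectivity is immediate from $j_\rs^*(j_\rs)_{!*} \cong \id$, and surjectivity follows by observing that any $f : \mathcal{F} \to \mathcal{G}$ extends to both $(j_\rs)_! f$ and $(j_\rs)_* f$ compatibly with the canonical morphism $(j_\rs)_! \to (j_\rs)_*$, and therefore restricts to a morphism of the images $(j_\rs)_{!*}\mathcal{F} \to (j_\rs)_{!*}\mathcal{G}$. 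Taking $\mathcal{F} = \mathcal{G} = \Groth(\bk)|_{\fg_\rs}$ completes the argument.
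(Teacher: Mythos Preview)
Your proposal is correct and follows essentially the same route as the paper: apply Lemma~\ref{lem:end-functor} after checking that $\ubb_X$, $\mu_!$, and $\pi_!$ commute with direct sums, and then invoke \eqref{eqn:Groth-IC} together with full faithfulness of $(j_\rs)_{!*}$ for the second arrow. The paper simply asserts that $\mu_!$ and $\pi_!$ commute with direct sums and cites a reference for the full faithfulness of intermediate extension, whereas you supply brief justifications for both; note incidentally that properness of $\mu,\pi$ is not essential here, since $f_!$ commutes with direct sums in general (it is left adjoint to $f^!$).
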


\begin{proof}
The functors $\Groth$ and $\Spr$ both commute with direct sums: this follows from the observation that the functor \eqref{eqn:constant-sheaf} commutes with direct sums, as well as the functors $\pi_!$ and $\mu_!$. Using Lemma \ref{lem:end-functor}, we deduce that \eqref{eqn:end-spr} and the first morphism of \eqref{eqn:end-groth} are isomorphisms. The fact that the second morphism of \eqref{eqn:end-groth} is also an isomorphism follows from \eqref{eqn:Groth-IC} and the well-known property that the functor $(j_\rs)_{!*}$ is fully faithful (see e.g.~\cite[Proposition 2.29]{juteau-aif}).
\end{proof}

\subsection{Change of rings}
\label{subsect:changerings2}

The next two propositions deal with a comparison between two coefficient rings.  Let $\bk'$ be a $\bk$-algebra. Recall the functors $\Spr^{\bk'}_{\bk}$ and $\Groth^{\bk'}_{\bk}$ introduced in \S \ref{ss:Spr-Groth-change-rings}. Composing on the right with $\Formod_{\bk,\bk'}$, respectively composing on the left with $\For^\fg_{\bk,\bk'}$ and using isomorphism
\eqref{eqn:Spr-k-k'}, we obtain natural maps
\[
\End(\Groth^\bk) \to \End(\Groth^{\bk'}_\bk) \leftarrow \End(\Groth^{\bk'}),
\]
and likewise for $\Spr$. The map $\End(\Groth^{\bk'}_\bk) \to \End(i_\cN^* \circ \Groth^{\bk'}_\bk)$ induced by $i_\cN^*$, combined with isomorphism~\eqref{eqn:groth-res-k-k'}, gives a morphism of $\bk$-algebras
\[
\tilde\rho^{\bk'}_\bk: \End(\Groth^{\bk'}_\bk) \to \End(\Spr^{\bk'}_\bk).
\]
Similar constructions using isomorphism \eqref{eqn:groth-res} provide $\bk$-algebra morphisms
\[
\tilde\rho^\bk : \End(\Groth^\bk) \to \End(\Spr^\bk) \qquad\text{and}\qquad \tilde\rho^{\bk'} : \End(\Groth^{\bk'}) \to \End(\Spr^{\bk'}).
\]

Similarly, the map $\End(\Groth^{\bk'}_\bk) \to \End(\bT_{\fg} \circ \Groth^{\bk'}_\bk)$ induced by $\bT_\fg$, combined with isomorphisms \eqref{eqn:groth-fourier} and \eqref{eqn:morphism-Fourier-k-k'} and the fact that $i_{\cN!}$ is fully faithful, gives rise to morphisms of $\bk$-algebras
\begin{gather*}
\tilde\varphi^{\bk'}_\bk: \End(\Groth^{\bk'}_\bk) \to \End(\Spr^{\bk'}_\bk),
\\
\tilde\varphi^\bk: \End(\Groth^\bk) \to \End(\Spr^\bk), \qquad \tilde\varphi^{\bk'}: \End(\Groth^{\bk'}) \to \End(\Spr^{\bk'}).
\end{gather*}
Since $\bT_\fg$ is an equivalence of categories, these morphisms are algebra isomorphisms.

\begin{lem}\label{lem:scalar-compare}
The following diagrams commute, where the vertical arrows are defined above:
\[
\xymatrix@R=0.6cm{
\End(\Groth^{\bk}) \ar[r]^{\tilde\rho^\bk} \ar[d] & \End(\Spr^{\bk}) \ar[d] \\
\End(\Groth^{\bk'}_{\bk}) \ar[r]^{\tilde\rho^{\bk'}_\bk} & \End(\Spr^{\bk'}_{\bk}) \\
\End(\Groth^{\bk'}) \ar[r]^{\tilde\rho^{\bk'}} \ar[u] & \End(\Spr^{\bk'}) \ar[u]}
\qquad\qquad
\xymatrix@R=0.6cm{
\End(\Groth^{\bk}) \ar[r]^{\tilde\varphi^\bk} \ar[d] & \End(\Spr^{\bk}) \ar[d] \\
\End(\Groth^{\bk'}_{\bk}) \ar[r]^{\tilde\varphi^{\bk'}_\bk} & \End(\Spr^{\bk'}_{\bk}) \\
\End(\Groth^{\bk'}) \ar[r]^{\tilde\varphi^{\bk'}} \ar[u] & \End(\Spr^{\bk'}) \ar[u]}
\]
\end{lem}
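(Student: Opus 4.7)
The plan is to recognize that each diagram is essentially built into the construction of its middle horizontal arrow. Indeed, $\tilde\rho^{\bk'}_\bk$ is obtained by applying $i_\cN^*[-r]$ to an endomorphism of $\Groth^{\bk'}_\bk$ and conjugating by the isomorphism \eqref{eqn:groth-res-k-k'}, which by the earlier discussion may be realized either as $\phi$ or as $\phi'$; similarly $\tilde\varphi^{\bk'}_\bk$ uses $\bT_\fg$ and \eqref{eqn:morphism-Fourier-k-k'} $=\psi=\psi'$, together with the full faithfulness of $i_{\cN!}$. My claim is that each top square commutes when \eqref{eqn:groth-res-k-k'} (respectively \eqref{eqn:morphism-Fourier-k-k'}) is realized as $\phi$ (respectively $\psi$), while each bottom square commutes when these are realized as $\phi'$ (respectively $\psi'$). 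Since $\phi=\phi'$ and $\psi=\psi'$, the single map $\tilde\rho^{\bk'}_\bk$ (respectively $\tilde\varphi^{\bk'}_\bk$) makes both squares commute.

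For the top square of the first diagram, unfold the left vertical arrow as whiskering with $\Formod_{\bk,\bk'}$ on the right. Given $\eta\in\End(\Groth^\bk)$, both routes around the square yield the natural transformation $i_\cN^*\eta[-r]$ whiskered with $\Formod_{\bk,\bk'}$, conjugated by an isomorphism $i_\cN^*\Groth^{\bk'}_\bk[-r]\simto\Spr^{\bk'}_\bk$. The right-then-down route uses $\eqref{eqn:groth-res}^{\bk}$ whiskered with $\Formod_{\bk,\bk'}$, while the down-then-right route uses $\phi$. The first defining square of $\phi$ in \S\ref{ss:Spr-Groth-change-rings} is precisely the assertion that these two isomorphisms coincide.

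For the bottom square, the vertical arrows are whiskering on the left by the appropriate forgetful functor combined with the identification \eqref{eqn:Spr-k-k'}. Tracing $\eta'\in\End(\Groth^{\bk'})$, both routes yield $\For^\cN_{\bk,\bk'}(i_\cN^*\eta'[-r])$, conjugated by an isomorphism $i_\cN^*\Groth^{\bk'}_\bk[-r]\simto\Spr^{\bk'}_\bk$: the up-then-right route uses $\phi'$, while the right-then-up route uses $\For^\cN_{\bk,\bk'}$ applied to $\eqref{eqn:groth-res}^{\bk'}$, combined with \eqref{eqn:for^*} and \eqref{eqn:Spr-k-k'}. The second defining square of $\phi'$ says exactly that these two isomorphisms coincide.

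The second diagram is handled identically, with $i_\cN^*$ replaced by $\bT_\fg$, the isomorphisms \eqref{eqn:groth-res} by \eqref{eqn:groth-fourier}, and \eqref{eqn:for^*} by \eqref{eqn:for-fourier}; the two defining squares for $\psi$ and $\psi'$ give commutativity of the top and bottom halves, respectively. I expect the main obstacle to be purely one of bookkeeping: one must carefully unfold the vertical arrows on the $\bk'$ side (which combine a forgetful functor with \eqref{eqn:Spr-k-k'}) and recognize that the resulting commutative square is literally one of the two squares used to define $\phi,\phi',\psi$, or $\psi'$. There is no additional content beyond the definitions.
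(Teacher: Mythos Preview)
Your proposal is correct and follows exactly the same approach as the paper: the upper squares commute by the definition of \eqref{eqn:groth-res-k-k'} (resp.~\eqref{eqn:morphism-Fourier-k-k'}) as $\phi$ (resp.~$\psi$), while the lower squares commute by the equivalent definition as $\phi'$ (resp.~$\psi'$). The paper states this in two sentences; you have simply unfolded the bookkeeping in more detail.
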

\begin{proof}
Consider the left-hand diagram. The commutativity of the upper square easily follows from the definition of \eqref{eqn:groth-res-k-k'} as the morphism $\phi$ of \S\ref{ss:Spr-Groth-change-rings}, and the commutativity of the lower square easily follows from the equivalent definition of this morphism as the morphism $\phi'$ of \S\ref{ss:Spr-Groth-change-rings}. The proof of the commutativity of the right-hand diagram is similar, using 
the morphisms $\psi$ and $\psi'$ of 
\S\ref{ss:fourier-spr-groth-change-rings}.
\end{proof}

\begin{lem}\label{lem:scalar-injective}
The maps 
\[
\End(\Groth^{\bk'}) \to \End(\Groth^{\bk'}_\bk) \qquad\text{and}\qquad \End(\Spr^{\bk'}) \to \End(\Spr^{\bk'}_\bk)
\]
defined above are injective.
\end{lem}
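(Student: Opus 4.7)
I describe the plan for the statement about $\Spr$; the $\Groth$ case is analogous, with $\pi$ replacing $\mu$ (both are proper).

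By Lemma~\ref{lem:end-functor} applied to each of $\Spr^{\bk'} : \bk'\allmod \to \Perv(\cN,\bk')$ and $\Spr^{\bk'}_{\bk} : \bk'\allmod \to \Perv(\cN,\bk)$ (both are exact and commute with direct sums, by the reasoning in the proof of Lemma~\ref{lem:end-gen}), evaluation at $\bk' \in \bk'\allmod$ gives isomorphisms
\begin{equation*}
\End(\Spr^{\bk'}) \simto \End_{\Perv(\cN,\bk')}\bigl(\Spr^{\bk'}(\bk')\bigr), \quad \End(\Spr^{\bk'}_{\bk}) \simto \End_{\Perv(\cN,\bk)}\bigl(\For^\cN_{\bk,\bk'}\Spr^{\bk'}(\bk')\bigr),
\end{equation*}
under which the map of interest becomes the map induced by $\For^\cN_{\bk,\bk'}$ on endomorphism groups. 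So the task reduces to showing that this forgetful map is injective on $\End_{\Perv(\cN,\bk')}(\Spr^{\bk'}(\bk'))$.

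To this end I will use the adjunction $(-)\otimes^L_{\bk}\bk' \dashv \For^\cN_{\bk,\bk'}$ between $\Db(\cN,\bk)$ and $\Db(\cN,\bk')$: for any $X, Y \in \Db(\cN,\bk')$, the forgetful map on $\Hom(X,Y)$ is identified with precomposition by the counit $c_X : (\For^\cN_{\bk,\bk'} X) \otimes^L_{\bk}\bk' \to X$, and injectivity follows if $c_X$ is a split epimorphism. For $X = \Spr^{\bk'}(\bk') = \mu_!\ub{\bk'}[2N]$, the projection formula together with the $\For$-compatibilities \eqref{eqn:for_!} and \eqref{eqn:for-constant} yields an identification
\[
(\For^\cN_{\bk,\bk'} X) \otimes^L_{\bk} \bk' \cong \mu_!\ub{\bk' \otimes^L_{\bk} \bk'}_{\tcN}[2N],
\]
under which $c_X$ becomes $\mu_!$ applied to the constant-sheaf version of the ring multiplication $\bk' \otimes^L_{\bk} \bk' \to \bk'$.

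This multiplication is split in $D(\bk')$ by the algebra-unit $\bk' \to \bk' \otimes^L_{\bk} \bk'$, $x \mapsto x \otimes 1$, available because $\bk'$ is a commutative $\bk$-algebra; applying $\ubb_{\tcN}$ and then $\mu_!$ preserves the splitting, so $c_X$ is a split epimorphism and the forgetful map is injective. The main obstacle will be to execute the chain of identifications defining $c_X$ rigorously, ensuring that the projection formula, the adjunction counit, and the various forgetful-compatibility isomorphisms assemble into a single commutative diagram; this is exactly the kind of coherence verification that Appendix~\ref{sect:isomsheaf} is designed to support.
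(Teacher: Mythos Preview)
Your approach is sound and genuinely different from the paper's. The paper proves the $\Groth$ statement first: by Lemma~\ref{lem:end-gen} it suffices to check injectivity after restricting to $\fg_\rs$, where $\Groth^{\bk'}(\bk')$ is a shifted local system; since the forgetful functor on the abelian category of sheaves is obviously faithful, injectivity is immediate. The $\Spr$ statement is then deduced from the $\Groth$ one via the second diagram of Lemma~\ref{lem:scalar-compare}, where the horizontal maps $\tilde\varphi$ are isomorphisms. Your argument, by contrast, treats both cases uniformly and invokes neither the regular semisimple locus nor the Fourier--Sato transform: it shows that for any object of the form $X=f_!\ub{\bk'}$ the counit of the adjunction $(-)\otimes^L_\bk\bk'\dashv\For$ at $X$ is split, hence precomposition by it is injective. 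This is a more portable fact, at the cost of having to set up the extension-of-scalars adjunction and check that it intertwines $f_!$ and $\ubb$ correctly; the paper's route avoids that bookkeeping by exploiting the specific geometry.

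One small correction: with the standard convention that $(-)\otimes^L_\bk\bk'$ carries its $\bk'$-structure via the right-hand tensor factor, the map $x\mapsto x\otimes 1$ is \emph{not} $\bk'$-linear. The section you want is $x\mapsto 1\otimes x$, obtained by applying $(-)\otimes^L_\bk\bk'$ to the structure map $\bk\to\bk'$; this \emph{is} $\bk'$-linear and composes with multiplication to the identity, so the rest of your argument goes through unchanged. Note also that the coherence you flag (that the counit for $\mu_!\ub{\bk'}$ is $\mu_!$ of the counit for $\ub{\bk'}$ under the identifications) requires compatibility of $(-)\otimes^L_\bk\bk'$ with $(\cdot)_!$ and $\ubb$, which is of the same flavour as Appendix~\ref{sect:isomsheaf} but not literally among the lemmas recorded there.
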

\begin{proof}
For the first assertion, by Lemma~\ref{lem:end-gen}, it suffices to show that the map
\[
\End_{\Perv(\fg_\rs,\bk')}(\Groth^{\bk'}(\bk')|_{\fg_\rs}) \to
\End_{\Perv(\fg_\rs,\bk)}(\Groth^{\bk'}_{\bk}(\bk')|_{\fg_\rs})
\]
is injective.  But this is clear, since we are now comparing two endomorphism rings of a single locally constant sheaf on $\fg_\rs$.  The result for $\Spr$ then follows from the second diagram in Lemma~\ref{lem:scalar-compare}, using the fact that the morphisms $\tilde\varphi$ are isomorphisms.
\end{proof}

\subsection{$\bW$-action on $\Groth$}

Let us now construct a canonical isomorphism
\[
r_{\bk}: \bk[\bW] \simto \End(\Groth^{\bk}).
\]
Throughout this construction, the notation $\ub M$ will indicate a constant sheaf on $\tfg_\rs$.  Because $\pi_\rs$ is a regular covering map whose group of deck transformations is $\bW$, there is a natural action of $\bW$ on any object of the form $\pi_{\rs *} \ub M$.  Since all our objects live in $\bk$-linear categories, this extends to a natural action of the ring $\bk[\bW]$. Moreover, the morphism $\tilde r_\bk : \bk[\bW] \to \End_{\Sh(\fg_\rs,\bk)}(\pi_{\rs *} \ub \bk)$ induced by this action is an isomorphism. Combining this with Lemma \ref{lem:end-gen} provides the isomorphism $r_\bk$.

\begin{prop}\label{prop:groth-end}
If $\bk'$ is a $\bk$-algebra, the morphism $\End(\Groth^\bk) \to \End(\Groth^{\bk'}_\bk)$ defined in {\rm \S\ref{subsect:changerings2}} factors (uniquely) through a morphism
\[
\alpha^{\bk'}_\bk : \End(\Groth^\bk) \to \End(\Groth^{\bk'}).
\]
Moreover the following diagram commutes, where the left vertical morphism is the natural algebra morphism:
\[
\xymatrix@R=0.6cm{
\bk[\bW] \ar[d]\ar[r]^-{r_\bk} & \End(\Groth^\bk) \ar[d]^-{\alpha^{\bk'}_\bk} \\
\bk'[\bW] \ar[r]_-{r_{\bk'}} & \End(\Groth^{\bk'})}
\]
\end{prop}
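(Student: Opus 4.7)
The plan is to reduce to a calculation on the regular semisimple locus, exploiting both the injectivity in Lemma \ref{lem:scalar-injective} and the presentation of $\End(\Groth^\bk)$ via the deck-transformation action supplied by $r_\bk$.

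Uniqueness of the factorization is immediate: the map $\End(\Groth^{\bk'}) \to \End(\Groth^{\bk'}_\bk)$ is injective by Lemma \ref{lem:scalar-injective}. The same injectivity reduces the construction of $\alpha^{\bk'}_\bk$ to producing, for each $\eta \in \End(\Groth^\bk)$, \emph{some} element of $\End(\Groth^{\bk'})$ whose image in $\End(\Groth^{\bk'}_\bk)$ coincides with that of $\eta$. Since $r_\bk$ is an isomorphism, it suffices to treat $\eta = r_\bk(w)$ for $w \in \bW$ and then extend $\bk$-linearly. The natural candidate is $\alpha^{\bk'}_\bk(r_\bk(w)) := r_{\bk'}(w)$, and once this is shown to be well defined, commutativity of the square with $\bk[\bW]$ and $\bk'[\bW]$ is automatic, since both composites send $w \in \bW$ to $r_{\bk'}(w)$.

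The heart of the argument is therefore to verify, for each $w \in \bW$, that the images of $r_\bk(w)$ and $r_{\bk'}(w)$ in $\End(\Groth^{\bk'}_\bk)$ agree. By Lemma \ref{lem:end-gen} this equality may be tested after evaluating the endomorphism of $\Groth^{\bk'}_\bk$ at $\bk' \in \bk'\allmod$ and restricting along $j_\rs$. On $\fg_\rs$ both natural transformations then act on the single sheaf $\pi_{\rs *} \ub{\bk'}_{\tfg_\rs}$ as the deck-transformation action of $w$, the one directly and the other after applying $\For^{\fg_\rs}_{\bk,\bk'}$. Since this action is defined purely in terms of the $\bW$-action on $\tfg_\rs$ and the canonical $\bW$-equivariance isomorphism $w_* \ub M \cong \ub M$ of the constant sheaf, it is manifestly compatible with the forgetful functor.

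The main obstacle is the bookkeeping required to render this compatibility precise. One must unwind the definition of the vertical map $\End(\Groth^\bk) \to \End(\Groth^{\bk'}_\bk)$ from \S\ref{subsect:changerings2}, trace $\tilde r_\bk$ at the sheaf level through the isomorphisms \eqref{eqn:for_!}--\eqref{eqn:for-constant}, and confirm that the forgetful functor commutes with pushforward along the covering $\pi_\rs$ in a $\bW$-equivariant way. These verifications are formal and rely on the compatibility lemmas collected in Appendix \ref{sect:isomsheaf}; no new geometric input is required beyond the transparent observation that the deck action is intrinsic to the geometric covering $\pi_\rs$ and is therefore insensitive to the choice of coefficient ring.
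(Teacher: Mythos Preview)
Your proposal is correct and follows essentially the same approach as the paper: both reduce to the regular semisimple locus and use that the deck-transformation $\bW$-action on $\pi_{\rs *}\ub{\bk'}$ is intrinsic to the covering and hence independent of the coefficient ring. The paper packages the argument as one large commutative diagram linking $\bk[\bW]$, $\End_{\Sh(\fg_\rs,\bk)}(\pi_{\rs *}\ub{\bk'})$, $\End_{\Perv(\fg,\bk)}(\Groth^\bk(\bk'))$, and $\End(\Groth^{\bk'}_\bk)$, whereas you exploit the isomorphism $r_\bk$ to work element-by-element on $w\in\bW$; but the content is the same, including the use of Lemma~\ref{lem:end-functor} (evaluation at $\bk'$) and the intermediate-extension property to justify testing equality after restriction to $\fg_\rs$.
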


\begin{proof}
Consider the following diagram:
\[
\xymatrix@C=0.6cm@R=0.6cm{
\bk[\bW] \ar[r]_-{\sim}^-{\tilde{r}_\bk} \ar[dd] & \End_{\Sh(\fg_\rs,\bk)}(\pi_{\rs *} \ub \bk) & \End_{\Perv(\fg,\bk)}(\Groth^{\bk}(\bk)) \ar[l]^-{\sim}_-{c} & \End(\Groth^{\bk}) \ar[l]^-{\sim}_-{\mathrm{ev}_\bk} \ar[d]^-{a} \ar[ld]_-{\mathrm{\mathrm{ev}_{\bk'}}} \\
& \End_{\Sh(\fg_\rs,\bk)}(\pi_{\rs *} \ub \bk') & \End_{\Perv(\fg,\bk)}(\Groth^{\bk}(\bk')) \ar[l]^-{\sim}_-{c} & \End(\Groth^{\bk'}_{\bk}) \ar[l]^-{\sim}_-{\mathrm{ev}_{\bk'}} \\
\bk'[\bW] \ar[r]_-{\sim}^-{\tilde r_{\bk'}} & \End_{\Sh(\fg_\rs,\bk')}(\pi_{\rs *} \ub \bk') \ar[u]_-{b} & \End_{\Perv(\fg,\bk')}(\Groth^{\bk'}(\bk'))  \ar[l]^-{\sim}_-{c} \ar[u]_-{b} & \End(\Groth^{\bk'}) \ar[l]^-{\sim}_-{\mathrm{ev}_{\bk'}} \ar[u]_-{b}}
\]
The map labelled ``$a$" is induced by the functor $\Formod_{\bk,\bk'}$, the maps 
``$b$'' are induced by 
$\For^\fg_{\bk,\bk'}$ or $\For^{\fg_\rs}_{\bk,\bk'}$, and the maps 
``$c$'' are induced by 
$j_\rs^*$. By Lemma \ref{lem:end-functor}, the horizontal maps from the fourth column to the third one are isomorphisms.

The two squares on the bottom of the diagram are obviously commutative, as is the triangle in the top right corner. Hence the diagram defines two maps from $\bk[\bW]$ to $\End_{\Sh(\fg_\rs,\bk)}(\pi_{\rs *} \ub \bk')$. We claim that these maps coincide: 
this follows from the fact that these maps are induced by the $\bW$-action on $\pi_{\rs *} \ub \bk'$, which is independent of the coefficient ring. 
Hence the diagram as a whole is commutative.

It follows that the image of $\End(\Groth^\bk) \xrightarrow{a} \End(\Groth^{\bk'}_\bk)$ is contained in the image of $\End(\Groth^{\bk'}) \xrightarrow{b} \End(\Groth^{\bk'}_\bk)$. Since the latter map is injective (see Lemma \ref{lem:scalar-injective}), the existence of $\alpha^{\bk'}_{\bk}$ follows. The commutativity of the diagram is clear. 
\end{proof}

\subsection{$\bW$-actions on $\Spr$}

We now define
\begin{align*}
\rho^\bk&: \bk[\bW] \to \End(\Spr^\bk) &&\text{by} & \rho^\bk &= \tilde\rho^\bk \circ r_\bk, \\
\varphi^\bk&: \bk[\bW] \to \End(\Spr^\bk) &&\text{by} & \varphi^\bk &= \tilde\varphi^\bk \circ r_\bk.
\end{align*}
Like $\tilde\varphi^\bk$, the map $\varphi^\bk$ is an isomorphism of $\bk$-algebras.  By composing with the isomorphism in Lemma~\ref{lem:end-gen}, we obtain the maps $\drho^\bk, \dvarphi^\bk$ considered in Theorem~\ref{thm:main-intro}.

\begin{prop}
\label{prop:scalars}
If $\bk'$ is a $\bk$-algebra, there is a canonical $\bk$-algebra homomorphism
\[
\beta^{\bk'}_\bk : \End(\Spr^\bk) \to \End(\Spr^{\bk'})
\]
such that the following diagrams commute:
\[
\xymatrix@R=0.5cm{
\bk[\bW] \ar[d]\ar[r]^-{\rho^\bk} & \End(\Spr^\bk) \ar[d]^-{\beta^{\bk'}_\bk} \\
\bk'[\bW] \ar[r]^-{\rho^{\bk'}} & \End(\Spr^{\bk'})}
\qquad\qquad
\xymatrix@R=0.5cm{
\bk[\bW] \ar[d]\ar[r]^-{\varphi^\bk} & \End(\Spr^\bk) \ar[d]^-{\beta^{\bk'}_\bk} \\
\bk'[\bW] \ar[r]^-{\varphi^{\bk'}} & \End(\Spr^{\bk'})}
\]
\end{prop}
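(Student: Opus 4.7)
The plan is to define $\beta^{\bk'}_\bk$ by exploiting the fact, established in \S\ref{subsect:changerings2}, that $\tilde\varphi^\bk$ and $\tilde\varphi^{\bk'}$ are $\bk$-algebra isomorphisms, and then to verify the two diagrams separately. The main obstacle is that $\tilde\rho^\bk$ and $\tilde\rho^{\bk'}$ are not a priori known to be isomorphisms---this is essentially what Theorem~\ref{thm:main-intro} will assert---so one cannot define $\beta^{\bk'}_\bk$ symmetrically via $\tilde\rho$, and the commutativity of the $\rho$-diagram will require more work than that of the $\varphi$-diagram.

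First, I would set
\[
\beta^{\bk'}_\bk \; := \; \tilde\varphi^{\bk'} \circ \alpha^{\bk'}_\bk \circ (\tilde\varphi^\bk)^{-1}.
\]
With this definition, commutativity of the right-hand ($\varphi$) diagram is immediate: using $\varphi^\bk = \tilde\varphi^\bk \circ r_\bk$ and $\varphi^{\bk'} = \tilde\varphi^{\bk'} \circ r_{\bk'}$ together with the diagram of Proposition~\ref{prop:groth-end}, one computes
\[
\beta^{\bk'}_\bk \circ \varphi^\bk \;=\; \tilde\varphi^{\bk'} \circ \alpha^{\bk'}_\bk \circ r_\bk \;=\; \tilde\varphi^{\bk'} \circ r_{\bk'} \circ \iota \;=\; \varphi^{\bk'} \circ \iota,
\]
where $\iota : \bk[\bW] \to \bk'[\bW]$ denotes the natural algebra homomorphism.

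Next, I would observe, via a short diagram chase in the right-hand square of Lemma~\ref{lem:scalar-compare}, that the composite of $\beta^{\bk'}_\bk$ with the natural map $\End(\Spr^{\bk'}) \to \End(\Spr^{\bk'}_\bk)$ coincides with the natural map $\End(\Spr^\bk) \to \End(\Spr^{\bk'}_\bk)$ of \S\ref{subsect:changerings2}. The chase uses the fact (built into the construction of $\alpha^{\bk'}_\bk$ in Proposition~\ref{prop:groth-end}) that the natural map $\End(\Groth^\bk) \to \End(\Groth^{\bk'}_\bk)$ factors through $\End(\Groth^{\bk'})$ via $\alpha^{\bk'}_\bk$, combined with both squares of the right-hand diagram in Lemma~\ref{lem:scalar-compare}.

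Finally, I would establish commutativity of the left-hand ($\rho$) diagram. By Lemma~\ref{lem:scalar-injective}, the map $\End(\Spr^{\bk'}) \to \End(\Spr^{\bk'}_\bk)$ is injective, so it suffices to verify the equality $\beta^{\bk'}_\bk \circ \rho^\bk = \rho^{\bk'} \circ \iota$ after postcomposing with this injection. Using the previous step and the top square of the left-hand diagram of Lemma~\ref{lem:scalar-compare}, the left-hand side becomes $\tilde\rho^{\bk'}_\bk$ applied to the image of $r_\bk$ in $\End(\Groth^{\bk'}_\bk)$ along $\End(\Groth^\bk) \to \End(\Groth^{\bk'}_\bk)$; using the bottom square of the same diagram, the right-hand side becomes $\tilde\rho^{\bk'}_\bk$ applied to the image of $r_{\bk'} \circ \iota$ in $\End(\Groth^{\bk'}_\bk)$ along $\End(\Groth^{\bk'}) \to \End(\Groth^{\bk'}_\bk)$. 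These two elements of $\End(\Groth^{\bk'}_\bk)$ coincide by the commutative diagram of Proposition~\ref{prop:groth-end}, which completes the verification.
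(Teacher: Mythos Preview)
Your proof is correct and uses the same ingredients as the paper's, with only a minor presentational difference. The paper defines $\beta^{\bk'}_\bk$ \emph{implicitly} via the factorization property you establish in your third paragraph (image containment from Lemma~\ref{lem:scalar-compare} plus injectivity from Lemma~\ref{lem:scalar-injective}), and then deduces both ``simplified'' squares $\tilde\rho^{\bk'}\circ\alpha^{\bk'}_\bk=\beta^{\bk'}_\bk\circ\tilde\rho^\bk$ and $\tilde\varphi^{\bk'}\circ\alpha^{\bk'}_\bk=\beta^{\bk'}_\bk\circ\tilde\varphi^\bk$ symmetrically from the two diagrams of Lemma~\ref{lem:scalar-compare}; combining with Proposition~\ref{prop:groth-end} then gives both required squares at once. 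You instead define $\beta^{\bk'}_\bk$ \emph{explicitly} as $\tilde\varphi^{\bk'}\circ\alpha^{\bk'}_\bk\circ(\tilde\varphi^\bk)^{-1}$, which makes the $\varphi$-square immediate but forces you to recover the factorization property afterwards in order to handle the $\rho$-square. The two definitions of $\beta^{\bk'}_\bk$ agree, and the underlying diagram chases are identical.
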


\begin{proof}
Recall the morphisms considered in Lemma \ref{lem:scalar-compare}.
We claim first that the image of the morphism $\End(\Spr^\bk) \to \End(\Spr^{\bk'}_{\bk})$ is contained in the image of the map $\End(\Spr^{\bk'}) \to \End(\Spr^{\bk'}_{\bk})$.  Indeed, this follows from the corresponding assertion for $\Groth$ (proved in the course of establishing Proposition~\ref{prop:groth-end}) and the second commutative diagram in Lemma~\ref{lem:scalar-compare}, in which the horizontal maps are isomorphisms.  In view of Lemma~\ref{lem:scalar-injective}, we get a natural map $\beta^{\bk'}_\bk : \End(\Spr^\bk) \to \End(\Spr^{\bk'})$, and then the two commutative diagrams in Lemma~\ref{lem:scalar-compare} yield the simpler diagrams
\[
\vcenter{
\xymatrix{
\End(\Groth^{\bk}) \ar[r]^-{\tilde\rho^{\bk}} \ar[d]_-{\alpha^{\bk'}_\bk} & \End(\Spr^{\bk}) \ar[d]^-{\beta^{\bk'}_\bk} \\
\End(\Groth^{\bk'}) \ar[r]^-{\tilde\rho^{\bk'}} & \End(\Spr^{\bk'})}
}
\qquad\text{and}\qquad
\vcenter{
\xymatrix{
\End(\Groth^{\bk}) \ar[r]^-{\tilde\varphi^{\bk}} \ar[d]_-{\alpha^{\bk'}_\bk} & \End(\Spr^{\bk}) \ar[d]^-{\beta^{\bk'}_\bk} \\
\End(\Groth^{\bk'}) \ar[r]^-{\tilde\varphi^{\bk'}} & \End(\Spr^{\bk'})}
}
\]
We conclude by combining these diagrams with Proposition~\ref{prop:groth-end}.
\end{proof}

\subsection{Comparing the actions}

We can now explain how the main result of the paper, Theorem~\ref{thm:main-intro}, reduces to the $\bk=\Z$ case.  By Lemma~\ref{lem:end-gen}, Theorem~\ref{thm:main-intro} (for any given $\bk$) is equivalent to the following statement.

\begin{thm}\label{thm:main}
The two maps $\rho^\bk, \varphi^\bk: \bk[\bW] \to \End(\Spr^{\bk})$ are related as follows:
\[
\rho^\bk = \varphi^\bk \circ \sgn.
\]
\end{thm}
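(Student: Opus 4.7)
The plan is to reduce the statement to the case $\bk = \Z$, which (as noted in \S\ref{ss:strategy}) lies in the `easy' regime and will be proved independently in Section~\ref{sect:thm-Q} via the classical comparison on the cohomology of the flag variety $\cB$.

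For the reduction, I would view the given ring $\bk$ as a $\Z$-algebra through its structure map, and invoke Proposition~\ref{prop:scalars} with base ring $\Z$ and target ring $\bk$. This provides a ring morphism $\beta^{\bk}_{\Z} : \End(\Spr^{\Z}) \to \End(\Spr^{\bk})$ together with two commutative squares, one for $\rho$ and one for $\varphi$, relating the actions over $\Z$ and over $\bk$ through the natural morphism $\iota : \Z[\bW] \to \bk[\bW]$.

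Assuming the case $\bk = \Z$, namely $\rho^{\Z} = \varphi^{\Z} \circ \sgn$, I would postcompose with $\beta^{\bk}_{\Z}$ and use the two squares, together with the obvious identity $\iota \circ \sgn = \sgn \circ \iota$ (the sign twist being defined by the same character $\varepsilon$ on both sides), to conclude
\[
\rho^{\bk} \circ \iota \;=\; \varphi^{\bk} \circ \sgn \circ \iota
\]
as maps $\Z[\bW] \to \End(\Spr^{\bk})$. Since the image of $\iota$ contains the elements of $\bW$, which form a basis of $\bk[\bW]$ as a free $\bk$-module, and since both $\rho^{\bk}$ and $\varphi^{\bk} \circ \sgn$ are $\bk$-linear, the equality extends by $\bk$-linearity to all of $\bk[\bW]$.

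This reduction is essentially formal once the functorial framework of Section~\ref{sect:restrict} is in hand; the only substantive step is the $\bk = \Z$ case. I expect that to be the main obstacle, since it relies on the fact that $H^{\bullet}(\cB;\Z)$ is a faithful $\Z[\bW]$-module---a property that fails exactly for the coefficient rings of primary interest in modular representation theory, and which is precisely why the detour through the functorial formalism is necessary.
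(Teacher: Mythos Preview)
Your proposal is correct and matches the paper's own argument essentially verbatim: the paper also regards $\bk$ as a $\Z$-algebra, invokes Proposition~\ref{prop:scalars} to reduce to $\bk=\Z$, and defers that case to Section~\ref{sect:thm-Q}. Your added remark about extending the equality from the image of $\iota$ to all of $\bk[\bW]$ by $\bk$-linearity on the basis $\bW$ makes explicit what the paper leaves as ``obvious''.
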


\noindent
Since we can regard $\bk$ as a $\Z$-algebra, it is obvious from Proposition~\ref{prop:scalars} that it suffices to prove Theorem~\ref{thm:main} in the $\bk=\Z$ case.
The $\bk=\Z$ case of Theorem~\ref{thm:main-intro} is covered by known arguments, explained in the next section.

\section{Proof of the `easy' case}
\label{sect:thm-Q}

Here we explain how to prove Theorem~\ref{thm:main-intro} in the `easy' case of \S\ref{ss:strategy}.
The required arguments 
appeared first around thirty years ago in papers by Hotta~\cite{hotta}, Brylinski~\cite{bry} and Spaltenstein~\cite{spalt}, and subsequently in the surveys by Shoji~\cite{shoji} and Jantzen~\cite{jantzen}. Those references considered the \'etale topology and $\ell$-adic cohomology, so some adaptations to our setting are needed. The other reason for revisiting these arguments is that our statement is slightly more precise than, for example, ~\cite[Proposition 17.7]{shoji}, in that it amounts to proving $\bW$-equivariance of a specific isomorphism rather than the mere existence of a $\bW$-equivariant isomorphism. 

\subsection{Actions on the cohomology of $\cB$}
\label{ss:cohomology-B}

We begin by recalling the classical $\bW$-action on $H^\bullet(\cB;\bk)$.  Choose a Borel subgroup $B \subset G$ and a maximal torus $T \subset B$; we obtain a canonical identification $\bW = N_G(T)/T$.  Then $\bW$ acts on $G/T$ and on $H^\bullet(G/T;\bk)$.  The map $G/T \to G/B = \cB$ is a fibre bundle whose fibre $B/T$ is an affine space of dimension $N$, and it thus induces an isomorphism
\begin{equation} \label{eqn:cohom}
H^\bullet(\cB;\bk) \cong H^\bullet(G/T;\bk).
\end{equation}
We use this isomorphism to define a $\bW$-action on $H^\bullet(\cB;\bk)$. Note that this construction is compatible with change of rings in the obvious sense: if $\bk'$ is a $\bk$-algebra then the natural morphism $H^\bullet(\cB;\bk) \to H^\bullet(\cB;\bk')$ is $\bW$-equivariant.
A similar construction defines a $\bW$-action on the Chow ring $A(\cB)$ of $\cB$ (see \cite[\S 4.7]{demazure-ens}).

\begin{lem}
\label{lem:Chow-cohomology}
There is a canonical isomorphism $\bk \otimes_\Z A(\cB) \cong H^\bullet(\cB;\bk)$ of graded $\bk[\bW]$-modules.
\end{lem}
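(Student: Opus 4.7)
The plan is to establish the isomorphism over $\Z$ first, then bootstrap to arbitrary $\bk$ via universal coefficients, and finally verify $\bW$-equivariance by tracing everything through $G/T$.

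First I would use the Bruhat decomposition, which stratifies $\cB$ by affine Schubert cells $BwB/B$, $w \in \bW$, of complex dimension $\ell(w)$. Standard consequences are that $H^\bullet(\cB;\Z)$ is a free $\Z$-module concentrated in even degrees, with basis given by the fundamental classes of the closed Schubert varieties, and that the cycle class map
\[
\mathrm{cl}_\Z : A(\cB) \to H^\bullet(\cB;\Z)
\]
is an isomorphism of graded rings. Because $H^\bullet(\cB;\Z)$ is $\Z$-free, the universal coefficient theorem gives a canonical isomorphism $H^\bullet(\cB;\bk) \cong \bk \otimes_\Z H^\bullet(\cB;\Z)$. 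Composing with $\mathrm{id}_\bk \otimes \mathrm{cl}_\Z$ yields the desired isomorphism of graded $\bk$-modules $\bk \otimes_\Z A(\cB) \simto H^\bullet(\cB;\bk)$.

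Next I would check $\bW$-equivariance. Both sides carry $\bW$-actions that are constructed in the same way: the action on $H^\bullet(\cB;\bk)$ was defined in \S\ref{ss:cohomology-B} via the pullback isomorphism $H^\bullet(\cB;\bk) \simto H^\bullet(G/T;\bk)$ (induced by the affine bundle $G/T \to \cB$ with fibre $B/T$) together with the natural $\bW$-action on $G/T$; Demazure's $\bW$-action on $A(\cB) \cong A(G/T)$ in \cite[\S 4.7]{demazure-ens} is defined through the analogous pullback isomorphism in Chow theory and the natural $\bW$-action on $A(G/T)$. Since the cycle class map $A(G/T) \to H^\bullet(G/T;\bk)$ and the pullback along an affine bundle are natural with respect to arbitrary morphisms of varieties, each building block is $\bW$-equivariant, and compatibility with extension of scalars from $\Z$ to $\bk$ is automatic. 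Naturality of the universal coefficient isomorphism in the space argument completes the verification.

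The step requiring the most care is arranging the $\bW$-equivariance so that the two actions on $H^\bullet(\cB;\bk)$ and $A(\cB)$ match through the \emph{same} isomorphism we constructed, rather than via some possibly different identification. This is handled by observing that everything factors through $G/T$, where both $\bW$-actions are literally induced by the same geometric $\bW$-action on $G/T$, so naturality of $\mathrm{cl}$ with respect to the automorphisms given by $w \in \bW$ gives the required compatibility without any further calculation.
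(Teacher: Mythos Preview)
Your proposal is correct and follows essentially the same approach as the paper: reduce to $\bk=\Z$ via freeness of $H^\bullet(\cB;\Z)$ from the Bruhat cell decomposition, then use the cycle class map and argue $\bW$-equivariance from functoriality. Your treatment of the $\bW$-equivariance, tracing both actions through $G/T$ where they arise from the same geometric $\bW$-action, is more explicit than the paper's one-word appeal to ``functoriality,'' but the content is identical.
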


\begin{proof}
Since $\cB$ is paved by affine spaces, $H^\bullet(\cB;\bk)$ is a free $\bk$-module of rank $|\bW|$, and the natural morphism $H^\bullet(\cB;\Z) \to H^\bullet(\cB;\bk)$ induces an isomorphism $\bk \otimes_\Z H^\bullet(\cB;\Z) \xrightarrow{\sim} H^\bullet(\cB;\bk)$. Hence it is enough to prove the lemma when $\bk=\Z$.

There is a canonical morphism $A(\cB) \to H^\bullet(\cB;\Z)$, see e.g.~\cite[\S 19.1]{fulton}. By functoriality this morphism is $\bW$-equivariant. The arguments in \cite[Proposition 13.1]{jantzen} prove that it is injective. Surjectivity follows from the fact that $H^\bullet(\cB;\Z)$ is spanned by characteristic classes of Schubert varieties. (See also \cite[\S 8]{demazure-inv}.)
\end{proof}

\begin{lem}
\label{lem:regular}
Let $\mathbb{F}$ be either $\Q$ or $\mathbb{F}_p$, where $p$ does not divide $|\bW|$. Then the $\mathbb{F}[\bW]$-module $\mathbb{F} \otimes_\Z A(\cB)$ is isomorphic to the regular representation.
\end{lem}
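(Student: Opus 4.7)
The plan is to identify $\mathbb{F} \otimes_\Z A(\cB)$ with the coinvariant algebra of $\bW$ over $\mathbb{F}$, and then invoke the Chevalley--Shephard--Todd theorem on invariants of finite reflection groups.

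First I would note, using the Bruhat decomposition, that $\cB$ is paved by $|\bW|$ Schubert cells, so $A(\cB)$ is a free $\Z$-module of rank $|\bW|$; hence $\mathbb{F} \otimes_\Z A(\cB)$ has $\mathbb{F}$-dimension $|\bW|$, which is the correct total dimension for the regular representation.

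Next I would invoke the Borel characteristic map $c : S(X^*(T)) \to A(\cB)$ (sending a character to the first Chern class of the associated line bundle), which is a $\bW$-equivariant graded ring homomorphism. After tensoring with $\mathbb{F}$, and using the hypothesis that $|\bW|$ is invertible in $\mathbb{F}$, this becomes a surjection whose kernel is the ideal generated by positive-degree $\bW$-invariants. This yields a $\bW$-equivariant graded isomorphism
\[
\mathbb{F} \otimes_\Z A(\cB) \;\cong\; S_\mathbb{F}(V)\big/\bigl\langle S_\mathbb{F}(V)^\bW_+ \bigr\rangle,
\]
where $V$ is the reflection representation of $\bW$ over $\mathbb{F}$ (identified with a $\bW$-stable subquotient of $X^*(T) \otimes \mathbb{F}$). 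Finally the Chevalley--Shephard--Todd theorem, valid whenever $|\bW|$ is invertible in the base field, identifies this coinvariant algebra with the regular representation of $\bW$ as an $\mathbb{F}[\bW]$-module.

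The main obstacle is justifying the Borel presentation with the given coefficient rings: one needs that the kernel of $c \otimes \mathbb{F}$ is \emph{exactly} the ideal generated by positive-degree $\bW$-invariants, which in turn uses that $S_\mathbb{F}(V)$ is free over $S_\mathbb{F}(V)^\bW$ (Chevalley--Shephard--Todd again) combined with the dimension count from the first step. An alternative, perhaps cleaner, route is to first reduce to the case $\mathbb{F} = \Q$ via a Brauer character argument: since $A(\cB)$ is a $\Z[\bW]$-lattice whose underlying $\Z$-module is free, and since $\mathbb{F}_p[\bW]$ is semisimple when $p \nmid |\bW|$, the $\mathbb{F}_p[\bW]$-module $\mathbb{F}_p \otimes_\Z A(\cB)$ is determined by the character of $\Q \otimes_\Z A(\cB)$; one then cites the classical rational statement (Borel, Chevalley) that $\Q \otimes_\Z A(\cB) \cong H^\bullet(\cB;\Q)$ affords the regular representation.
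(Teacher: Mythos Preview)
Your proposal is correct, and in fact your ``alternative, perhaps cleaner, route'' is precisely the paper's argument: the paper first treats $\mathbb{F}=\Q$ by identifying $\Q\otimes_\Z A(\cB)$ with the coinvariant algebra (citing Demazure) and invoking the classical result that the coinvariant algebra affords the regular representation (citing Bourbaki); then for $\mathbb{F}=\F_p$ it uses that the composition factors of the modular reduction of a $\Q_p[\bW]$-module are independent of the chosen $\Z_p$-lattice (citing Serre), together with the semisimplicity of $\F_p[\bW]$ when $p\nmid|\bW|$.

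Your main approach---establishing the Borel presentation directly over $\F_p$ and then applying Chevalley--Shephard--Todd in that characteristic---would also go through, but as you rightly flag, it requires more work to pin down the kernel of the characteristic map over $\F_p$. The paper's reduction-to-$\Q$ argument sidesteps this entirely by only ever needing the coinvariant description rationally, at the cost of invoking the standard lattice-independence fact from modular representation theory. Either way the semisimplicity of $\F_p[\bW]$ is the crucial hypothesis.
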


\begin{proof}
By \cite[\S 4.6]{demazure-ens}, $\Q \otimes_\Z A(\cB)$ is isomorphic to the coinvariant algebra $C_\Q$. Hence when $\mathbb{F}=\Q$ the result follows from~\cite[Ch.~V, \S5, Th\'eor\`eme 2]{bourbaki}. If $\mathbb{F}=\mathbb{F}_p$, recall that the composition factors of the modular reduction of a $\Q_p[\bW]$-module do not depend on the $\Z_p$-lattice used for the reduction, see~\cite[\S 15.2]{serre}. As $\Q_p \otimes_\Z A(\cB)$ is isomorphic to the regular representation, we deduce that $\mathbb{F} \otimes_\Z A(\cB)$ has the same composition factors as $\mathbb{F}[\bW]$. Under our assumptions $\mathbb{F}[\bW]$ is a semisimple ring, whence the claim. 
\end{proof}

We denote by $\bk_\mathrm{triv}$, respectively~$\bk_\varepsilon$, the $\bk[\bW]$-module which is free of rank one over $\bk$ with trivial $\bW$-action, respectively~with $\bW$-action defined by $\varepsilon$.

\begin{lem}
\label{lem:cohomology-B}
\begin{enumerate}
\item
The $\bk[\bW]$-module $H^{2N}(\cB;\bk)$ is isomorphic to $\bk_{\varepsilon}$.
\item
Assume that $\bk$ embeds either in a $\Q$-algebra, or in an $\F_p$-algebra where $p$ does not divide $|\bW|$.
Then the $\bk[\bW]$-module $H^\bullet(\cB;\bk)$ is faithful.
\end{enumerate}
\end{lem}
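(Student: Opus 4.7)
My plan is to deduce both parts from the preceding two lemmas, working almost entirely inside the Chow ring $A(\cB)$.

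For part~(1), I apply Lemma~\ref{lem:Chow-cohomology} in top degree to obtain an isomorphism $H^{2N}(\cB;\bk) \cong \bk \otimes_\Z A^N(\cB)$ of $\bk[\bW]$-modules, so it suffices to show $A^N(\cB) \cong \Z_\varepsilon$ as $\Z[\bW]$-modules. The paving by Schubert cells gives that $A^N(\cB)$ is free of rank one over $\Z$, generated by the class of a point, so the $\bW$-action on it is described by a character $\bW \to \{\pm 1\}$. To identify this character I tensor with $\Q$ and use the identification $\Q \otimes_\Z A(\cB) \cong C_\Q$ from~\cite[\S 4.6]{demazure-ens} (already invoked in Lemma~\ref{lem:regular}); the problem then reduces to the classical statement that the top-degree component of the coinvariant algebra is the sign representation, see e.g.~\cite[Ch.~V, \S5]{bourbaki}. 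Hence $A^N(\cB) \cong \Z_\varepsilon$, and tensoring with $\bk$ yields part~(1).

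For part~(2), let $\bk'$ denote an $\mathbb{F}$-algebra into which $\bk$ embeds, where $\mathbb{F} = \Q$ or $\F_p$ with $p \nmid |\bW|$. Combining Lemma~\ref{lem:Chow-cohomology}, associativity of tensor products, Lemma~\ref{lem:regular}, and the canonical isomorphism $\bk' \otimes_\mathbb{F} \mathbb{F}[\bW] \cong \bk'[\bW]$, I obtain $\bk'[\bW]$-linear isomorphisms
\[
\bk' \otimes_\bk H^\bullet(\cB;\bk) \;\cong\; \bk' \otimes_\Z A(\cB) \;\cong\; \bk'[\bW].
\]
Thus $\bk' \otimes_\bk H^\bullet(\cB;\bk)$ is a free, hence faithful, $\bk'[\bW]$-module. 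Any $\eta \in \bk[\bW]$ that annihilates $H^\bullet(\cB;\bk)$ also annihilates its extension of scalars to $\bk'$; faithfulness then gives $\eta = 0$ in $\bk'[\bW]$, and the injectivity of $\bk \hookrightarrow \bk'$ forces $\eta = 0$ in $\bk[\bW]$, as required.

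The only genuinely nontrivial input is the classical identification of the top piece of the coinvariant algebra with the sign representation of $\bW$; once that is in hand, both parts reduce to bookkeeping with the preceding lemmas and a standard extension-of-scalars argument.
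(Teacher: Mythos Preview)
Your proof is correct; both parts follow the same overall strategy as the paper (reduce to the Chow ring via Lemma~\ref{lem:Chow-cohomology}, then invoke Lemma~\ref{lem:regular} for faithfulness), and your part~(2) is essentially a more explicit version of the paper's extension-of-scalars reduction.

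The one genuine difference is in part~(1). The paper identifies the $\bW$-action on the top Chow group directly, by citing the explicit formula for the action of simple reflections on $A(\cB)$ in \cite[\S4.7]{demazure-ens}. You instead observe that the action on a rank-one free $\Z$-module is given by a character $\bW\to\{\pm1\}$, tensor with $\Q$, and then appeal to the classical identification of the top graded piece of the coinvariant algebra $C_\Q$ with the sign representation (via \cite[\S4.6]{demazure-ens} and \cite[Ch.~V, \S5]{bourbaki}). Both arguments are short and standard; the paper's is slightly more self-contained in that it stays within the Chow ring and uses a single reference, while yours trades that for an appeal to a fact about coinvariant algebras that many readers will know independently.
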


\begin{proof}
(1) By Lemma \ref{lem:Chow-cohomology}, it is enough to prove the analogous claim for $A(\cB)$. We already know that $A^{2N}(\cB)$ is a free $\Z$-module of rank $1$, and we only have to identify the $\bW$-action. However, the action of simple reflections is described in \cite[\S 4.7]{demazure-ens}, which is sufficient to prove the claim.

(2) 
It is enough to prove the claim when $\bk$ is a $\Q$-algebra, or an $\F_p$-algebra where $p$ does not divide $|\bW|$. However in this case, by Lemmas \ref{lem:Chow-cohomology} and \ref{lem:regular}, the $\bk[\bW]$-module $H^\bullet(\cB;\bk)$ is isomorphic to the regular representation, which is faithful.
\end{proof}

One can also consider cohomology with compact support. As above there is a canonical isomorphism
\begin{equation*} 
H^\bullet_c(\cB;\bk) \cong H^{\bullet+2N}_c(G/T;\bk),
\end{equation*}
which we can use to define a $\bW$-action on $H^\bullet_c(\cB;\bk)$. 

\begin{lem}
\label{lem:cohomology-c-B}
The $\bW$-module $H_c^{2N}(\cB;\bk)$ is isomorphic to $\bk_{\mathrm{triv}}$.
\end{lem}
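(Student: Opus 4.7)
The plan is to exploit the defining isomorphism $H^{\bullet}_c(\cB;\bk) \cong H^{\bullet+2N}_c(G/T;\bk)$, which in degree $\bullet = 2N$ becomes a $\bW$-equivariant identification $H^{2N}_c(\cB;\bk) \cong H^{4N}_c(G/T;\bk)$. The right-hand side is the top compactly supported cohomology of the smooth complex variety $G/T$, which has complex dimension $\dim_{\C} G - \dim_{\C} T = 2N$ and therefore real dimension $4N$.

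First I would check that $G/T$ is connected: the map $G/T \to G/B = \cB$ is a fibre bundle with fibre $B/T \cong U_B \cong \C^N$, and the base $\cB$ is connected, so the total space is as well. Consequently $H^{4N}_c(G/T;\bk) \cong \bk$ is free of rank one.

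Next I would argue that the $\bW$-action on this module is trivial. The action of $\bW = N_G(T)/T$ on $G/T$ is induced by right multiplication by representatives in $N_G(T)$, and is therefore by algebraic, in particular holomorphic, automorphisms of the complex manifold $G/T$. Such automorphisms preserve the canonical complex orientation, so they act as the identity on the top compactly supported cohomology with $\Z$-coefficients; the general $\bk$-case then follows by extension of scalars, much as in the proof of Lemma~\ref{lem:cohomology-B}.

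I do not anticipate any serious obstacle. The main point worth flagging is the apparent tension with Lemma~\ref{lem:cohomology-B}(1), where the analogous top cohomology of $\cB$ carried the sign character rather than the trivial character: this discrepancy is fully accounted for by the degree shift of $2N$ distinguishing the compactly supported isomorphism used here from the ordinary cohomology isomorphism used there. Under the latter, $H^{2N}(\cB;\bk) \cong H^{2N}(G/T;\bk)$ sits in the middle of the cohomology of $G/T$ rather than at the top, so the simple orientation argument no longer applies and a more delicate computation (via Chow rings and Demazure's formulas) is required.
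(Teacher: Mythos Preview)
Your proposal is correct and follows essentially the same approach as the paper: both arguments pass to $H^{4N}_c(G/T;\bk)$ via the defining isomorphism, observe that this is free of rank one generated by the complex orientation class, and conclude triviality of the $\bW$-action from the fact that $\bW$ acts by holomorphic automorphisms and hence preserves orientation. Your additional remarks on connectedness and on the contrast with Lemma~\ref{lem:cohomology-B}(1) are accurate but not needed for the proof itself.
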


\begin{proof}
By definition we have an isomorphism $H_c^{2N}(\cB;\bk) \cong H^{4N}_c(G/T;\bk)$ of $\bW$-modules. Now $H^{4N}_c(G/T;\bk)$ is free of rank one, with a canonical generator given by the orientation of $G/T$ induced by the complex structure. This generator is fixed by $\bW$, which proves the claim.
\end{proof}

Of course, since $\cB$ is compact, one would not normally distinguish $H^\bullet_c(\cB;\bk)$ from $H^\bullet(\cB;\bk)$. We do so here because the $\bW$-actions are not quite the same:

\begin{lem}\label{lem:poincare}
The natural isomorphism 
\begin{equation}
\label{eqn:poincare}
H^\bullet_c(\cB;\bk) \simto H^\bullet(\cB;\bk) 
\end{equation}
is $(\bW,\varepsilon)$-equivariant, in the sense that it becomes $\bW$-equivariant when one of the two sides is tensored with the sign representation of $\bW$.
\end{lem}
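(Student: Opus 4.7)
The plan is to exploit Poincar\'e duality on $G/T$, which is $\bW$-equivariant since $\bW$ acts on $G/T$ by right translation through holomorphic, hence orientation-preserving, automorphisms. Note that since $\cB$ is compact, the map~\eqref{eqn:poincare} is the identity of the $\bk$-module $H^\bullet(\cB;\bk)$, so the task reduces to comparing two $\bW$-actions on this module: the action $\cdot_A$ transferred from $H^\bullet(G/T;\bk)$ via pullback along the projection $p : G/T \to \cB$, and the action $\cdot_B$ transferred from $H^{\bullet+2N}_c(G/T;\bk)$ via the isomorphism induced by $p_!\,\ub{\bk}_{G/T} \cong \ub{\bk}_\cB[-2N]$.

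First, Poincar\'e--Verdier duality gives a $\bW$-equivariant isomorphism $\mathrm{PD}_{G/T} : H^i(G/T;\bk) \simto H^{4N-i}_c(G/T;\bk)^\vee$, explicitly $a \mapsto (c \mapsto \int_{G/T} a\cup c)$. Using $p^*$ on the source and the dual of the isomorphism $H^{2N-i}(\cB;\bk) \simto H^{4N-i}_c(G/T;\bk)$ on the target, one obtains a pairing $H^i(\cB;\bk) \times H^{2N-i}(\cB;\bk) \to \bk$, which the projection formula for $p$ identifies with the standard Poincar\'e pairing $(x,y) \mapsto \int_\cB x \cup y$ on $\cB$. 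The $\bW$-equivariance of $\mathrm{PD}_{G/T}$ then translates to the identity
\[
\int_\cB (w \cdot_A x) \cup y \;=\; \int_\cB x \cup (w^{-1} \cdot_B y) \qquad \text{for all } w \in \bW,\ x, y.
\]

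Second, since $p^*$ is a ring homomorphism and the $\bW$-action on $H^\bullet(G/T;\bk)$ is induced by an action on the space, cup product on $H^\bullet(\cB;\bk)$ is $\bW$-equivariant for $\cdot_A$. Combined with Lemma~\ref{lem:cohomology-B}(1), which states $H^{2N}(\cB;\bk) \cong \bk_\varepsilon$ as a $\bW$-module under $\cdot_A$, this yields
\[
\int_\cB (w \cdot_A x) \cup (w \cdot_A y) \;=\; \varepsilon(w) \int_\cB x \cup y.
\]
Substituting $y \mapsto w \cdot_A y$ in the previous identity and comparing gives $\int_\cB x \cup \bigl( w^{-1} \cdot_B (w \cdot_A y) - \varepsilon(w)\, y \bigr) = 0$ for all $x$. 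Non-degeneracy of the Poincar\'e pairing on the free $\bk$-module $H^\bullet(\cB;\bk)$ (inherited by base change from the unimodular pairing over $\Z$) then gives $w \cdot_A y = \varepsilon(w)\,(w \cdot_B y)$, which is precisely the $(\bW,\varepsilon)$-equivariance of~\eqref{eqn:poincare}.

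The delicate point is the projection-formula identification in the second paragraph: one must verify that the ordinary and compactly-supported versions of ``integration along fibers'' for $p$ interact with Poincar\'e duality on $G/T$ to reproduce the standard pairing on $\cB$. Once this is established, the remaining manipulations with cup products and the sign character on top cohomology are routine.
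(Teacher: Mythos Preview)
Your argument is correct and matches the paper's approach: both compare the two $\bW$-actions via the Poincar\'e pairing on $\cB$, using its perfectness together with the computation of the top-degree actions. The paper phrases this as a commutative square of $\bW$-equivariant cup-product pairings, citing both Lemma~\ref{lem:cohomology-B}(1) and Lemma~\ref{lem:cohomology-c-B}, whereas you derive the equivariance of the mixed pairing explicitly from Poincar\'e duality on $G/T$ and the projection formula---in effect supplying the justification the paper leaves to the word ``natural'', and using the orientation-preserving property of the $\bW$-action on $G/T$ (which is exactly the content of Lemma~\ref{lem:cohomology-c-B}) directly rather than by citation.
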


\begin{proof}
For any $i$ we have natural (in particular, $\bW$-equivariant) pairings giving the horizontal arrows of the commutative diagram
\[
\xymatrix@R=0.5cm{
H^i(\cB;\bk)\otimes_\bk H^{2N-i}_c(\cB;\bk) \ar[r] \ar[d] & H^{2N}_c(\cB;\bk) \ar[d] \\
H^i(\cB;\bk)\otimes_\bk H^{2N-i}(\cB;\bk) \ar[r] & H^{2N}(\cB;\bk)
}
\]
where the vertical arrows are given by \eqref{eqn:poincare}. By Poincar{\'e} duality, the upper arrow is a perfect pairing. (Note that $H^\bullet(\cB;\bk)$ is free, so that Poincar{\'e} duality holds with coefficients in $\bk$.) Hence the same is true for the lower arrow. By Lemma \ref{lem:cohomology-B} and Lemma \ref{lem:cohomology-c-B}, the right vertical map is $(\bW,\varepsilon)$-equivariant. The claim follows.
\end{proof}

\subsection{Comparison with Springer actions}

For any variety $X$, let $a_X$ denote the constant map $X \to \mathrm{pt}$.  Factoring $a_{\tfg}$ as either $a_\fg \circ \pi$ or as $a_\cB \circ t$ (where $t: \tfg \to \cB$ is the projection, a vector bundle of rank $\frac{1}{2}(d+r)$), we see that there are isomorphisms
\begin{equation}\label{eqn:groth-hcb}
H^\bullet \big( a_{\fg!}\Groth(\bk) \big) \simto H^\bullet \big( a_{\tfg!}\underline{\bk}_{\tfg}[d] \big) \simto H^{\bullet-r}_c(\cB;\bk).
\end{equation}
On the other hand, applying base-change to the cartesian square
\[
\vcenter{\xymatrix@R=0.6cm{
\cB \ar[r]^{j_\cB} \ar[d] \ar@{}[dr]|{\square} & \tfg \ar[d]^{\pi} \\
\{0\} \ar[r]_{j_0} & \fg}}
\]
(where all morphisms are the natural ones)
we obtain an isomorphism
\begin{equation}\label{eqn:groth-hob}
H^\bullet \big( j_0^*\Groth(\bk) \big) \simto H^{\bullet+d}(\cB;\bk).
\end{equation}

Let us denote by $\dr_\bk$ the composition of $r_\bk$ with the morphism $\mathrm{ev}_\bk : \End(\Groth) \to \End(\Groth(\bk))$. Both~\eqref{eqn:groth-hcb} and~\eqref{eqn:groth-hob} are $\bW$-equivariant, in the following sense.

\begin{lem}\label{lem:gamma-action}
\begin{enumerate}
\item Let $\gamma: \End(\Groth(\bk)) \to \End(H^\bullet(\cB;\bk))$ be the map induced by~\eqref{eqn:groth-hob}.  Then $\gamma \circ \dr_\bk: \bk[\bW] \to \End(H^\bullet(\cB;\bk))$ coincides with the $\bW$-action on $H^\bullet(\cB;\bk)$ defined in \S{\rm \ref{ss:cohomology-B}}.
\item Let $\gamma_c: \End(\Groth(\bk)) \to \End(H^\bullet_c(\cB;\bk))$ be the map induced by~\eqref{eqn:groth-hcb}.  Then $\gamma_c \circ \dr_\bk: \bk[\bW] \to \End(H^\bullet_c(\cB;\bk))$ coincides with the $\bW$-action on $H^\bullet_c(\cB;\bk)$ defined in \S{\rm \ref{ss:cohomology-B}}.
\end{enumerate}
\end{lem}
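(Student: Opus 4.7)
The plan is to verify, in each part, that the $\bW$-action on the cohomology of $\cB$ coming from $\dr_\bk$ coincides with the classical action defined in \S\ref{ss:cohomology-B}. The two actions have different origins---sheaf-theoretic (deck transformations on $\pi_\rs$) versus topological (the $\bW$-action on $G/T$)---so I need a geometric bridge. This will be the auxiliary space $G/T \times \Lie(T)$, where $T \subset B$ is the maximal torus fixed in \S\ref{ss:cohomology-B}, equipped with the $\bW$-action $w\cdot(gT,h) = (g\dot w^{-1} T, \mathrm{Ad}(\dot w) h)$ (well-defined since $T$ acts trivially on $\Lie(T)$), together with the natural map
\[
\sigma : G/T \times \Lie(T) \to \tfg, \qquad (gT,h) \mapsto (\mathrm{Ad}(g)h,\, gBg^{-1}).
\]
A direct computation on points will show that the restriction of $\sigma$ to $G/T \times (\Lie(T)\cap\fg_\rs)$ is a $\bW$-equivariant isomorphism onto $\tfg_\rs$, where the target carries the natural deck-transformation action. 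Thus on the regular semisimple part, the $\bW$-action factorizes as the product of the classical action on $G/T$ with the natural action on $\Lie(T)\cap\fg_\rs$.

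For part (2), I would apply proper base change to the factorization $a_\fg\circ\pi = a_\cB\circ t$ (where $t:\tfg\to\cB$ is the vector bundle projection of rank $d-N$) to recover~\eqref{eqn:groth-hcb}. The K\"unneth decomposition
\[
H^\bullet_c\bigl(G/T\times(\Lie(T)\cap\fg_\rs);\bk\bigr) \cong H^\bullet_c(G/T;\bk)\otimes H^\bullet_c(\Lie(T)\cap\fg_\rs;\bk)
\]
will then reduce the $\bW$-action on the left to the classical action on $H^\bullet_c(G/T;\bk)$, since the $\Lie(T)\cap\fg_\rs$ factor is $\bW$-equivariantly trivial in top degree (spanned by the orientation class of the complex vector space $\Lie(T)$; lower-degree hyperplane-complement contributions must either vanish or be accounted for via an open--closed argument along $\tfg_\rs \subset \tfg$). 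Combined with the affine-bundle isomorphism $H^\bullet_c(G/T;\bk)\cong H^{\bullet-2N}_c(\cB;\bk)$ and the shift in~\eqref{eqn:groth-hcb}, this identifies the two actions on $H^{\bullet-r}_c(\cB;\bk)$.

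For part (1), I would apply proper base change along the cartesian square $\cB \hookrightarrow \tfg \to \fg \hookleftarrow \{0\}$ to obtain~\eqref{eqn:groth-hob}. The comparison of $\bW$-actions would then proceed by a specialization argument: restrict everything to the one-parameter family $\C\cdot h_0 \hookrightarrow \fg$ for $h_0 \in \Lie(T)\cap\fg_\rs$, where $\pi$ becomes a trivial $\bW$-cover over $\C^*$ degenerating to $\cB$ over $0$. The nearby-cycle $\bW$-action at $0$ will coincide with the action from $\dr_\bk$ (by compatibility of nearby cycles with proper base change) and, via $\sigma$ applied to this family, with the classical action on $H^\bullet(G/T;\bk)$.

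The main technical obstacle will be the specialization argument in part (1): setting up nearby cycles for $\bk$-sheaves and verifying their $\bW$-equivariance requires some care, as the cited $\ell$-adic treatments~\cite{hotta,bry,spalt,shoji,jantzen} do not transfer verbatim. A secondary obstacle in part (2) will be the open--closed decomposition needed to pass from $\tfg_\rs$ to $\tfg$, which is technical but conceptually standard.
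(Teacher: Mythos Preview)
Your identification of the bridge space $G/T \times \Lie(T)$ and the map $\sigma$ is exactly the key ingredient in the argument the paper cites (Spaltenstein's, as recapitulated in \cite[Proposition~13.7]{jantzen} and \cite[Proposition~5.4]{shoji}), and your verification that $\sigma$ restricts to a $\bW$-equivariant isomorphism over the regular semisimple locus is correct. However, your execution for part~(1) has a real gap. Restricting to a line $\C\cdot h_0$ and invoking nearby cycles cannot recover the $\bW$-action on $H^\bullet(\cB;\bk)$: the specialization map from the stalk $j_0^*\Groth(\bk)\cong H^\bullet(\cB;\bk)[d]$ (spread over cohomological degrees $0$ through $2N$) to the nearby-cycle object (the generic stalk $\bk^{|\bW|}$, concentrated in a single degree) kills everything in positive degree, so the action on the source is not determined by the action on the target. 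Nearby cycles over a disc carry only a $\pi_1(\C^*)\cong\Z$ monodromy, not a $\bW$-action, so the phrase ``nearby-cycle $\bW$-action'' has no evident meaning here.

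The argument in the cited references proceeds differently. One first observes that the natural map $a_{\fg*}\Groth(\bk)\to j_0^*\Groth(\bk)$ is an isomorphism (both sides identify with $H^\bullet(\cB;\bk)[d]$, since $t:\tfg\to\cB$ is a vector bundle with zero section $j_\cB$), so it suffices to compute the $\bW$-action on $H^\bullet(\tfg;\bk)=H^{\bullet-d}(a_{\fg*}\Groth(\bk))$. This is determined by its restriction to $\fg_\rs$, where it is the deck-transformation action on $H^\bullet(\tfg_\rs;\bk)$. Now use your map $\sigma$: the composite
\[
H^\bullet(\cB;\bk)\;\cong\;H^\bullet(\tfg;\bk)\;\longrightarrow\;H^\bullet(\tfg_\rs;\bk)\;\overset{\sigma^*}{\cong}\;H^\bullet(G/T\times\fH_\rs;\bk)\;\longrightarrow\;H^\bullet(G/T;\bk),
\]
the last arrow being restriction to a slice $G/T\times\{h_0\}$, is precisely the isomorphism~\eqref{eqn:cohom}, and it intertwines the $\dr_\bk$-action with the classical one. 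No one-parameter degeneration or nearby-cycles machinery is needed. For part~(2) the paper runs the same argument with $H^\bullet_c$ in place of $H^\bullet$ and all arrows reversed; your K\"unneth/open--closed sketch is in the right spirit but faces an analogous difficulty, namely that the map $H^\bullet_c(\tfg_\rs;\bk)\to H^\bullet_c(\tfg;\bk)$ is not surjective in most of the relevant degrees.
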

\begin{proof}
Spaltenstein's proof of the $\ell$-adic analogue of (1)~\cite[Theorem 2]{spalt}, which is recapitulated in \cite[Proposition 5.4]{shoji} and \cite[Proposition 13.7]{jantzen}, adapts directly to our setting by simply replacing $\ell$-adic cohomology with $\bk$-cohomology throughout. A large part of this adapted proof amounts to showing that the following variant of~\eqref{eqn:groth-hcb} is $\bW$-equivariant:
$H^\bullet \big( a_{\fg*}\Groth(\bk) \big) \simto H^\bullet \big( a_{\tfg*}\underline{\bk}_{\tfg}[d] \big) \simto H^{\bullet+d}(\cB;\bk)$.
The proof of (2) is entirely analogous, but with $H^{\bullet+2\dim X}_c(X;\bk)$ instead of $H^\bullet(X;\bk)$ and with induced maps on cohomology going in the reverse direction.
\end{proof}

\subsection{Proof of the theorem}

We consider the cartesian diagram 
\[
\vcenter{\xymatrix@R=0.6cm{
\cB \ar[r]^{i_\cB} \ar[d] \ar@{}[dr]|{\square} & \tcN \ar[d]^{\mu}\ar[r]^{i_{\tcN}} \ar@{}[dr]|{\square} & \tfg \ar[d]^{\pi} \\
\{0\} \ar[r]_{i_0} & \cN \ar[r]_{i_\cN} & \fg}}
\]
where again all maps are the natural ones.
By the base-change theorem, we obtain an isomorphism
\[
H^{\bullet-2N} \big( i_0^* \Spr(\bk) \big) \simto H^\bullet(\cB;\bk).
\]
This induces a map of endomorphism rings, which we write as
\begin{equation}\label{eq:sigma}
\sigma: \End(\Spr(\bk)) \to \End(H^\bullet(\cB;\bk)) \overset{\eqref{eqn:poincare}}{\cong} \End(H^\bullet_c(\cB;\bk)).
\end{equation}
Denote by $\tilde\rho(\bk), \tilde\varphi(\bk) : \End(\Groth(\bk)) \to \End(\Spr(\bk))$ the maps induced by $\tilde\rho^\bk$, $\tilde\varphi^\bk$.

\begin{lem}\label{lem:sigma-action}
We have $\gamma = \sigma \circ \tilde\rho(\bk)$ and $\gamma_c = \sigma \circ \tilde\varphi(\bk)$.
\end{lem}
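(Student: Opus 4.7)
The plan is to prove both equalities by diagram chases in the composite cartesian diagram displayed just above the lemma, using the basic observation that $i_\cN \circ i_0 = j_0$, so that the pasting of the two small squares is the outer rectangle.

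For part~(1), I unfold $\sigma \circ \tilde\rho(\bk)$ applied to $f \in \End(\Groth(\bk))$: apply $i_\cN^*[-r]$ to $f$, transport via \eqref{eqn:groth-res} to an endomorphism of $\Spr(\bk)$, apply $i_0^*$, and finally identify cohomology via base change in the left small square. The point is that \eqref{eqn:groth-res} was itself built from base change in the right small square (see the proof of Lemma~\ref{lem:groth-res}), so the whole composition is governed by pasting two base changes. That pasting equals base change in the outer rectangle, which is precisely the isomorphism \eqref{eqn:groth-hob} used to define $\gamma(f)$; the equality follows.

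For part~(2), the route is analogous but with $\bT_\fg$ replacing $i_\cN^*$. The endomorphism $\tilde\varphi(\bk)(f)$ corresponds, under $i_{\cN!}$ (which is fully faithful) and \eqref{eqn:groth-fourier}, to $\bT_\fg(f)$; applying $i_0^*$ amounts, using that $j_0^* \circ i_{\cN!} \cong i_0^*$ (since $j_0$ factors through $i_\cN$), to applying $j_0^* \circ \bT_\fg$ to $f$. Viewing $\fg$ as a complex vector bundle over the point and $j_0$ as its zero section, the identity \eqref{eqn:zero-section} gives an isomorphism $j_0^* \circ \bT_\fg \simto a_{\fg!}[d]$, converting our composition into $a_{\fg!}(f)[d]$. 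After unwinding through the factorization $a_{\fg!}\Groth(\bk) \cong a_{\tfg!}\ub\bk_{\tfg}[d]$ and the vector bundle projection $t : \tfg \to \cB$, one recovers precisely the action of $\gamma_c(f)$ on $H^{\bullet+2N}_c(\cB;\bk)$.

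The main obstacle is coherence: one must verify that the various natural isomorphisms in play---base change, the $\Co$ identity for $i_\cN \circ i_0 = j_0$, the defining base change in \eqref{eqn:groth-res}, the Fourier--Sato isomorphism \eqref{eqn:groth-fourier}, and (for part~(2)) the zero-section identity \eqref{eqn:zero-section}---fit together into strictly commutative diagrams. This is precisely the bookkeeping collected in Appendix~\ref{sect:isomsheaf}; once it is invoked, each assertion reduces to a formal diagram chase with no additional geometric input.
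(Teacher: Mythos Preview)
Your proposal is correct and takes essentially the same approach as the paper's proof. One small correction: the coherence lemmas actually needed for part~(2) are Lemmas~\ref{lem:fourier-con-zero-section}, \ref{lem:fourier_!-zero-section-new}, and \ref{lem:fourier^*-zero-section-new} (the Fourier--zero-section compatibilities at the end of Appendix~\ref{sect:isomsheaf}), not the change-of-rings lemmas that make up most of that appendix; for part~(1) the paper invokes the pasting compatibility of base change with composition from \cite[Lemma~B.7(a)]{ahr}.
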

\begin{proof}
The first equality follows from compatibility of the base-change isomorphism with the composition isomorphism for $(\cdot)^*$ functors, see \cite[Lemma B.7(a)]{ahr}. The second equality follows from the commutativity of the following diagram:
\[
\xymatrix@C=1.5cm@R=0.6cm{
j_0^* \bT_\fg \Groth(\bk) \ar[r]^-{\eqref{eqn:zero-section}} \ar[d]_-{\eqref{eqn:groth-fourier}} &  a_{\fg !} \Groth(\bk)[d] \ar[r]^-{\Co} &  a_{\cB !} t_! \ub \bk_{\tfg}[2d] \ar[d]^-{\eqref{eqn:cohc}} \\
j_0^* i_{\cN !} \Spr(\bk) \ar[r]^-{\BC} & i_0^* \Spr(\bk) \ar[r]^-{\BC} & a_{\cB!} \ub \bk_\cB[2N]
}
\]
To prove this commutativity, one must analyse the left vertical map, obtained by applying the functor $j_0^*$ to the isomorphism~\eqref{eqn:groth-fourier}. That is, one must recall from the proof of Lemma~\ref{lem:groth-fourier} the individual isomorphisms of which~\eqref{eqn:groth-fourier} is the composition, and determine what isomorphism each of these induces on the stalk at $0$. Apart from basic compatibilities between composition and base-change isomorphisms as in~\cite[Lemmas B.4 and B.7]{ahr}, the key results are Lemmas~\ref{lem:fourier-con-zero-section},~\ref{lem:fourier_!-zero-section-new} and~\ref{lem:fourier^*-zero-section-new}. Explicitly, a special case of Lemma~\ref{lem:fourier_!-zero-section-new} provides the commutativity of the diagram
\[
\xymatrix@C=1.5cm@R=0.6cm{
j_0^* \bT_\fg f_! x_!\ub\bk_{\tfg}[d] \ar[d]_-{\eqref{eqn:fourier_!}} \ar[r]^-{\eqref{eqn:zero-section}} 
& a_{\fg !}f_!x_!\ub\bk_{\tfg}[2d] \ar[r]^-{\Co}
& (a_{\fg\times\cB})_!x_!\ub\bk_{\tfg}[2d] \ar[d]^-{\Co}\\
j_0^* f_! \bT_{\fg\times\cB} x_!\ub\bk_{\tfg}[d] \ar[r]^-{\BC}
& a_{\cB !} (y')^* \bT_{\fg\times\cB} x_!\ub\bk_{\tfg}[d] \ar[r]^-{\eqref{eqn:zero-section}}
& a_{\cB !} p'_! x_!\ub\bk_{\tfg}[2d]
}
\]
where $p'$ denotes the vector bundle projection $\fg\times\cB\to\cB$ and $y':\cB\to\fg\times\cB$ denotes the embedding of the zero section in that vector bundle. 
Similarly, Lemma~\ref{lem:fourier^*-zero-section-new} provides the commutativity of the diagram
\[
\xymatrix@R=0.6cm{
a_{\cB !} (y')^* \bT_{\fg\times\cB} x_! \ub\bk_{\tfg}[d] \ar[rr]^-{\eqref{eqn:zero-section}} \ar[d]_-{\eqref{eqn:fourier^*}} &&
  a_{\cB !} p'_! x_! \ub\bk_{\tfg}[2d] \ar[d]^-{\Co} \\
a_{\cB !} (y')^* (\ut x)^* \bT_{\tfg} \ub\bk_{\tfg}[d+N] \ar[r]^-{\Co}  &
  a_{\cB !} y^* \bT_{\tfg} \ub\bk_{\tfg}[d+N] \ar[r]^-{\eqref{eqn:zero-section}} &
  a_{\cB !} t_! \ub\bk_{\tfg}[2d] 
}
\]
and Lemma~\ref{lem:fourier-con-zero-section} provides the commutativity of the diagram
\[
\xymatrix@C=2cm@R=0.6cm{
a_{\cB !} y^* \bT_{\tfg} \ub\bk_{\tfg} [d+N] \ar[r]^-{\eqref{eqn:zero-section}} \ar[d]_-{\eqref{eqn:fourier-con}} &  a_{\cB !} t_! \ub\bk_{\tfg}[2d] \ar[d]^-{\eqref{eqn:cohc}} \\
a_{\cB !} y^* y_! \ub\bk_{\cB} [2N]  \ar[r] & a_{\cB !} \ub\bk_{\cB}[2N]
}
\]
where the lower horizontal map is the obvious one.
Details are left to the reader. 
\end{proof}
 
\begin{thm}
\label{thm:main-easy}
Assume that $\bk$ embeds either in a $\Q$-algebra, or in an $\F_p$-algebra where $p$ does not divide $|\bW|$.
Then:
\begin{enumerate}
\item the map $\sigma : \End(\Spr(\bk)) \to \End(H^\bullet(\cB;\bk))$
  defined in \eqref{eq:sigma} is injective;
\item we have $\drho^\bk = \dvarphi^\bk \circ \sgn$.
\end{enumerate}
\end{thm}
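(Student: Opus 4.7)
The plan is to transport everything via $\sigma$ to endomorphism rings of cohomology, where the two $\bW$-actions can be compared directly using the lemmas of \S\ref{ss:cohomology-B}. The main inputs are: the faithfulness statement of Lemma~\ref{lem:cohomology-B}(2), the comparison of $\sigma$ with $\gamma$ and $\gamma_c$ from Lemma~\ref{lem:sigma-action}, the identification of $\gamma \circ \dr_\bk$ and $\gamma_c \circ \dr_\bk$ with the natural $\bW$-actions from Lemma~\ref{lem:gamma-action}, and the twist by $\varepsilon$ in the Poincar\'e isomorphism provided by Lemma~\ref{lem:poincare}.

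For part~(1), I observe that $\dvarphi^\bk = \tilde\varphi(\bk) \circ \dr_\bk$ and $\tilde\varphi^\bk$ is an isomorphism (since $\bT_\fg$ is an equivalence), so $\dvarphi^\bk : \bk[\bW] \to \End(\Spr(\bk))$ is an isomorphism. Hence $\sigma$ is injective if and only if $\sigma \circ \dvarphi^\bk$ is injective. But by Lemma~\ref{lem:sigma-action} and Lemma~\ref{lem:gamma-action}(2), $\sigma \circ \dvarphi^\bk = \gamma_c \circ \dr_\bk$ is the structure map of the natural $\bW$-action on $H^\bullet_c(\cB;\bk)$. Under the hypothesis, Lemma~\ref{lem:cohomology-B}(2) says $H^\bullet(\cB;\bk)$ is a faithful $\bk[\bW]$-module, and by Lemma~\ref{lem:poincare} it is isomorphic as a $\bk[\bW]$-module to $H^\bullet_c(\cB;\bk) \otimes_\bk \bk_\varepsilon$; tensoring with a one-dimensional character preserves faithfulness, so $H^\bullet_c(\cB;\bk)$ is also faithful. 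This gives the required injectivity.

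For part~(2), by~(1) it suffices to check that $\sigma \circ \drho^\bk = \sigma \circ \dvarphi^\bk \circ \sgn$ inside either $\End(H^\bullet(\cB;\bk))$ or $\End(H^\bullet_c(\cB;\bk))$, which are identified via~\eqref{eqn:poincare}. By Lemmas~\ref{lem:sigma-action} and~\ref{lem:gamma-action}(1), the image of $\sigma \circ \drho^\bk$ in $\End(H^\bullet(\cB;\bk))$ is the natural $\bW$-action. By Lemmas~\ref{lem:sigma-action} and~\ref{lem:gamma-action}(2), the image of $\sigma \circ \dvarphi^\bk$ in $\End(H^\bullet_c(\cB;\bk))$ is the natural $\bW$-action on compactly supported cohomology. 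By Lemma~\ref{lem:poincare}, the identification $\End(H^\bullet_c(\cB;\bk)) \cong \End(H^\bullet(\cB;\bk))$ given by conjugation by~\eqref{eqn:poincare} sends the natural $\bW$-action on the left to the natural $\bW$-action on the right twisted by $\varepsilon$. Therefore $\sigma \circ \dvarphi^\bk(w)$ corresponds to $\varepsilon(w)$ times the natural action of $w$ on $H^\bullet(\cB;\bk)$, that is, to $\sigma \circ \drho^\bk(\varepsilon(w) w) = \sigma \circ \drho^\bk(\sgn(w))$. Injectivity of $\sigma$ then yields the claim.

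The only non-bookkeeping step is keeping track of the Poincar\'e twist, which is where the essential content of Hotta's classical argument enters; this is precisely what Lemma~\ref{lem:poincare} and Lemma~\ref{lem:sigma-action} package for us. The main potential obstacle is ensuring that the identifications $\sigma \circ \tilde\rho(\bk) = \gamma$ and $\sigma \circ \tilde\varphi(\bk) = \gamma_c$ are both being read in compatible ways (i.e.\ with $\sigma$ landing in $\End(H^\bullet(\cB;\bk))$ and in $\End(H^\bullet_c(\cB;\bk))$ via the same Poincar\'e identification); since Lemma~\ref{lem:sigma-action} is stated using the isomorphism~\eqref{eqn:poincare} already built into the definition of $\sigma$ in~\eqref{eq:sigma}, this is automatic.
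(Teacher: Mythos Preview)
Your proof is correct and follows essentially the same route as the paper: both arguments use Lemma~\ref{lem:sigma-action} to rewrite $\sigma\circ\drho^\bk$ and $\sigma\circ\dvarphi^\bk$ as $\gamma\circ\dr_\bk$ and $\gamma_c\circ\dr_\bk$, invoke Lemma~\ref{lem:gamma-action} to identify these with the classical $\bW$-actions, apply Lemma~\ref{lem:poincare} to see that they differ by $\sgn$, and appeal to Lemma~\ref{lem:cohomology-B}(2) together with the fact that $\dvarphi^\bk$ is an isomorphism to obtain the injectivity of $\sigma$. The only cosmetic difference is that the paper establishes the equality $\sigma\circ\tilde\varphi(\bk)\circ\dr_\bk=\sigma\circ\tilde\rho(\bk)\circ\dr_\bk\circ\sgn$ first and then reads off both (1) and (2) simultaneously, whereas you prove (1) separately before (2); also, your justification that $\dvarphi^\bk$ is an isomorphism should mention that $\dr_\bk$ (equivalently $r_\bk$) is an isomorphism as well, not just $\tilde\varphi^\bk$.
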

\begin{proof}
By Lemmas \ref{lem:poincare} and \ref{lem:gamma-action} we have
$\gamma_c \circ \dr_\bk = \gamma \circ \dr_\bk \circ \sgn$. Then by
Lemma \ref{lem:sigma-action} we deduce that $\sigma \circ
\tilde\varphi(\bk) \circ \dr_\bk = \sigma \circ \tilde\rho(\bk) \circ
\dr_\bk \circ \sgn$. Moreover, by Lemma \ref{lem:gamma-action} again
and Lemma \ref{lem:cohomology-B}(2), these morphisms are injective. As
$\tilde\varphi(\bk) \circ \dr_\bk$ is an isomorphism, we deduce (1),
and also that $\tilde\varphi(\bk) \circ
\dr_\bk = \tilde\rho(\bk) \circ \dr_\bk \circ \sgn$. This proves (2),
as by definition we have $\drho^\bk=\tilde\rho(\bk) \circ
\dr_\bk$ and $\dvarphi^\bk = \tilde\varphi(\bk) \circ \dr_\bk$.
\end{proof}

\section{Consequences}
\label{sect:zeroweight}

Reverting to a more usual notion
of perverse sheaves, 
we let $\Perv_G(\cN,\bk)$ denote the category of $G$-equivariant perverse $\bk$-sheaves on $\cN$ (assumed to be constructible with respect to the stratification by $G$-orbits). Let $\fN_\bk$ be the set of isomorphism classes of simple objects of $\Perv_G(\cN,\bk)$. (Abusing notation slightly, we will sometimes write $\cA\in\fN_\bk$ when we mean that $\cA$ is such a simple object.) Let $\Rep(\bW,\bk)$ denote the category of finitely-generated $\bk[\bW]$-modules, and let $\Irr\bk[\bW]$ be the set of isomorphism classes of simple objects of $\Rep(\bW,\bk)$. (Again, we will sometimes write $E\in\Irr\bk[\bW]$ when we mean that $E$ is such a simple object.) Note that both categories $\Perv_G(\cN,\bk)$ and $\Rep(\bW,\bk)$ are noetherian, since $\bk$ is noetherian, $G$ has finitely many orbits on $\cN$, and $\bW$ is finite.

\subsection{Springer correspondence}

One of the main objects of study in~\cite{ahr} was the functor
\[
\Ss_{\drho}:\Perv_G(\cN,\bk)\to\Rep(\bW,\bk), \quad \cA\mapsto\Hom_{\Perv_G(\cN,\bk)}(\Spr(\bk),\cA),
\]
where the $\bW$-action on $\Hom(\Spr(\bk),\cA)$ is defined 
by $w \cdot f = f \circ
\drho(w^{-1})$. 
By~\cite[\S 7]{ahr}, the functor $\Ss_{\drho}$
commutes with restriction to Levi subgroups in an appropriate sense,
and 
it is exact (i.e.~$\Spr(\bk)$ is projective in $\Perv_G(\cN,\bk)$).
On the other hand, the functor $\Ss_{\dvarphi}$ defined in the same
way but using the map $\dvarphi$ was studied implicitly
in~\cite{juteau} (in its \'etale version using Fourier--Deligne
transform) and explicitly in~\cite{mautner} (in the same setting as
the present paper).
We conclude from Theorem~\ref{thm:main-intro} that 
$\Ss_{\drho}$ and $\Ss_{\dvarphi}$
differ only by a sign twist.
Thus all the results proved about either of them are valid for both.
From now on, 
we will use the notation $\Ss$ to refer to $\Ss_{\dvarphi}$. 

\begin{prop}  \label{prop:left-adjoint}
$\Ss$ has a left adjoint $T:\Rep(\bW,\bk)\to\Perv_G(\cN,\bk)$ such that the natural transformation $\mathrm{id}\to \Ss\circ T$ is an isomorphism. 
\end{prop}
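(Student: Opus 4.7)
The plan is to construct $T$ using the standard ``tensor with $\Spr(\bk)$'' formula, exploiting two facts recalled just before the proposition: that $\Spr(\bk)$ is projective in $\Perv_G(\cN,\bk)$ (equivalent to exactness of $\Ss$), and that $\dvarphi : \bk[\bW] \simto \End(\Spr(\bk))$ is a ring isomorphism. Since $\bW$ is finite and $\bk$ is noetherian, the group algebra $\bk[\bW]$ is noetherian, so every $V \in \Rep(\bW,\bk)$ admits a finite presentation $\bk[\bW]^m \xrightarrow{\alpha} \bk[\bW]^n \to V \to 0$.

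Given such a presentation, I would define $T(V)$ as the cokernel (taken in the abelian category $\Perv_G(\cN,\bk)$) of the morphism $\Spr(\bk)^m \to \Spr(\bk)^n$ whose matrix of components is the image of $\alpha$ under $\dvarphi$. Standard homological algebra---lifting a morphism $V \to V'$ to a chain map between finite free presentations using projectivity of free $\bk[\bW]$-modules, and transporting the lift to the perverse-sheaf side via $\dvarphi$---shows that $T$ is a well-defined additive functor, independent of the presentation up to canonical isomorphism.

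The adjunction $\Hom_{\Perv_G(\cN,\bk)}(T(V),\cA)\cong\Hom_{\bk[\bW]}(V,\Ss(\cA))$ then follows by applying $\Hom_{\Perv_G(\cN,\bk)}(-,\cA)$ to the defining right exact sequence $\Spr(\bk)^m\to\Spr(\bk)^n\to T(V)\to 0$: the resulting left exact sequence $0\to\Hom(T(V),\cA)\to \Ss(\cA)^n\to \Ss(\cA)^m$ has rightmost map computed from the matrix $\alpha$ acting through the $\bk[\bW]$-structure on $\Ss(\cA)$ (induced by $\dvarphi$), so its kernel is exactly $\Hom_{\bk[\bW]}(V,\Ss(\cA))$.

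For the unit $V \to \Ss(T(V))$, I would apply the exact functor $\Ss$ to the same defining right exact sequence, obtaining $\Ss(\Spr(\bk))^m\to\Ss(\Spr(\bk))^n\to\Ss(T(V))\to 0$. The isomorphism $\Ss(\Spr(\bk))\cong \bk[\bW]$ of $\bk[\bW]$-modules provided by $\dvarphi$ identifies this with the original presentation of $V$, yielding $\Ss(T(V))\cong V$. I do not expect any serious obstacle: the main care required is in verifying that the $\bk[\bW]$-module identification $\Ss(\Spr(\bk))\cong \bk[\bW]$ is correct, given that the $\bW$-action on $\Ss(\cA)$ is defined by $w\cdot f=f\circ\dvarphi(w^{-1})$, and that the iso $V\cong \Ss(T(V))$ so constructed coincides with the unit of the adjunction---both of which amount to routine bookkeeping.
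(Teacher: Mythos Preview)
Your proposal is correct and follows essentially the same approach as the paper: define $T(V)$ as the cokernel of $\dvarphi(\alpha):\Spr(\bk)^m\to\Spr(\bk)^n$ for a finite free presentation of $V$, verify the adjunction by applying $\Hom(-,\cA)$, and verify that the unit is an isomorphism by applying the exact functor $\Ss$ and using $\Ss(\Spr(\bk))\cong\bk[\bW]$. The only cosmetic difference is that the paper invokes a general result of Gabriel to handle well-definedness of $T$ on morphisms, whereas you outline the direct homological-algebra verification.
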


\begin{proof}
The argument is almost the same as that in~\cite[pp.~403--404]{gabriel}. Let $M\in\Rep(\bW,\bk)$. Since $\Rep(\bW,\bk)$ is noetherian, there exists an exact sequence 
\begin{equation} \label{eqn:pres1}
\xymatrix{
\bk[\bW]^{\oplus m}\ar[r]^{\alpha} & \bk[\bW]^{\oplus n}\ar[r] & M\ar[r] & 0.
}
\end{equation}
Here $\alpha$ can be viewed as an $n\times m$ matrix of elements of $\bk[\bW]\simto\End_{\bk[\bW]}(\bk[\bW])$ (where the latter identification sends $w\in\bW$ to right multiplication by $w^{-1}$). Let $\dvarphi(\alpha)$ be the $n\times m$ matrix of elements of $\End(\Spr(\bk))$ obtained by applying $\dvarphi$ to each entry of $\alpha$. Define $T(M)\in\Perv_G(\cN,\bk)$ by the exact sequence 
\begin{equation} \label{eqn:pres2} 
\xymatrix{
\Spr(\bk)^{\oplus m}\ar[r]^{\dvarphi(\alpha)} &\Spr(\bk)^{\oplus n}\ar[r] & T(M)\ar[r] & 0.
}
\end{equation}
One can easily check that $T(M)$ represents the functor $\Hom_{\bk[\bW]}(M,\Ss(-))$. By a general principle~\cite[p.~346, Proposition 10]{gabriel}, there is a unique way to define $T$ on morphisms so that it becomes a functor that is left adjoint to $\Ss$.

Since $\dvarphi$ is a ring isomorphism, we have an isomorphism $\bk[\bW]\simto\Ss(\Spr(\bk))$ in $\Rep(\bW,\bk)$ sending $y\in\bW$ to $\dvarphi(y^{-1})\in\End(\Spr(\bk))=\Ss(\Spr(\bk))$. The induced ring isomorphism $\bk[\bW]\simto\End_{\bk[\bW]}(\bk[\bW])\simto\End_{\bk[\bW]}(\Ss(\Spr(\bk)))$ coincides with $\Ss\circ\dvarphi$.  
Applying the exact functor $\Ss$ to~\eqref{eqn:pres2} and comparing with~\eqref{eqn:pres1}, we conclude that the natural transformation $\mathrm{id}\to \Ss\circ T$ is an isomorphism.
\end{proof}

\begin{cor} \label{cor:quotient}
$\Ss$ is a quotient functor. That is, it induces an equivalence of categories between $\Rep(\bW,\bk)$ and the quotient of $\Perv_G(\cN,\bk)$ by the kernel of $\Ss$.
\end{cor}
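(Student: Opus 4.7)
The plan is to deduce this from a standard criterion in Gabriel's thesis \cite{gabriel}: an exact functor admitting a fully faithful left adjoint is a quotient functor onto its target. Both ingredients are already in place. Exactness of $\Ss=\Ss_{\dvarphi}$ is recorded in the discussion preceding Proposition~\ref{prop:left-adjoint} (it differs from $\Ss_{\drho}$ only by a sign twist, and $\Spr(\bk)$ is projective in $\Perv_G(\cN,\bk)$ by~\cite[\S7]{ahr}). Proposition~\ref{prop:left-adjoint} supplies the left adjoint $T$ together with an isomorphism $\eta:\mathrm{id}\simto\Ss\circ T$, which is well known to be equivalent to $T$ being fully faithful.

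First I would observe that because $\Ss$ is exact, its kernel $\ker(\Ss)$ is a Serre subcategory of $\Perv_G(\cN,\bk)$. The Serre quotient $\Perv_G(\cN,\bk)/\ker(\Ss)$ is therefore an abelian category, the localization functor $Q:\Perv_G(\cN,\bk)\to\Perv_G(\cN,\bk)/\ker(\Ss)$ is exact, and $\Ss$ factors uniquely as $\bar\Ss\circ Q$ for some exact functor $\bar\Ss$. Essential surjectivity of $\bar\Ss$ is immediate from the isomorphism $\bar\Ss(Q(T(M)))=\Ss(T(M))\xrightarrow{\eta_M^{-1}}M$ for any $M\in\Rep(\bW,\bk)$.

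The key step is fully faithfulness of $\bar\Ss$, which I would establish by analyzing the counit $\varepsilon:T\circ\Ss\to\mathrm{id}$ of the adjunction. The triangle identity gives $\Ss(\varepsilon_\cA)\circ\eta_{\Ss(\cA)}=\mathrm{id}_{\Ss(\cA)}$ for every $\cA\in\Perv_G(\cN,\bk)$, so the invertibility of $\eta_{\Ss(\cA)}$ forces $\Ss(\varepsilon_\cA)$ to be an isomorphism. Exactness of $\Ss$ then shows that both $\ker(\varepsilon_\cA)$ and $\mathrm{coker}(\varepsilon_\cA)$ lie in $\ker(\Ss)$, hence $Q(\varepsilon_\cA)$ is an isomorphism in the quotient category. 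This produces a natural isomorphism $Q\circ T\circ\Ss\simto Q$, which when combined with the essential surjectivity argument identifies $Q\circ T$ as a quasi-inverse to $\bar\Ss$.

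No step here is a serious obstacle: the statement is a formal consequence of Proposition~\ref{prop:left-adjoint} together with the exactness of $\Ss$. If desired, the argument can be replaced by a direct appeal to Gabriel's general theorem on quotient functors~\cite{gabriel}, already invoked in the construction of $T$.
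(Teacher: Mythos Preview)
Your proposal is correct and follows the same approach as the paper: both deduce the result from Proposition~\ref{prop:left-adjoint} together with the exactness of $\Ss$, invoking Gabriel's criterion~\cite[p.~374, Proposition~5]{gabriel}. The paper simply cites Gabriel's proposition, whereas you also unpack its proof; the ingredients and logic are identical.
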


\begin{proof}
This follows from Proposition~\ref{prop:left-adjoint} as in~\cite[p.~374, Proposition 5]{gabriel}.
\end{proof}

\begin{cor}[Springer correspondence over $\bk$] \label{cor:correspondence}
There is a subset $\fN_\bk^0\subseteq\fN_\bk$ such that $\Ss$ induces a bijection $\fN_\bk^0\simto\Irr\bk[\bW]$, and $\Ss(\cA)=0$ for $\cA\in\fN_\bk\setminus\fN_\bk^0$.
\end{cor}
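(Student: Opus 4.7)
The plan is to deduce the corollary from Corollary~\ref{cor:quotient} together with the general theory of simple objects in Serre quotients of abelian categories. Set
\[
\fN_\bk^0 := \{\cA \in \fN_\bk \mid \Ss(\cA) \neq 0\},
\]
so that $\Ss(\cA)=0$ for $\cA \in \fN_\bk \setminus \fN_\bk^0$ by construction, and it remains to show that $\Ss$ induces a bijection $\fN_\bk^0 \simto \Irr\bk[\bW]$. By Corollary~\ref{cor:quotient}, $\Ss$ identifies $\Rep(\bW,\bk)$ with the Serre quotient $\Perv_G(\cN,\bk)/\mathcal{K}$, where $\mathcal{K}:=\ker\Ss$.

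For the forward direction --- that $\Ss$ sends a simple $\cA \in \fN_\bk^0$ to a simple $\bk[\bW]$-module, and distinguishes isomorphism classes --- I would use the left adjoint $T$ of Proposition~\ref{prop:left-adjoint} combined with the exactness of $\Ss$. The counit $\epsilon_{\cA}:T\Ss(\cA)\to\cA$ becomes an isomorphism after applying $\Ss$ (by the triangle identity together with $\mathrm{id}\simto\Ss T$), so both $\ker\epsilon_{\cA}$ and $\mathrm{coker}\,\epsilon_{\cA}$ lie in $\mathcal{K}$. For simple $\cA$ outside $\mathcal{K}$, this forces $\epsilon_{\cA}$ to be surjective; combined with exactness of $\Ss$ and the standard description of subobjects in a Serre quotient, this yields that $\Ss(\cA)$ is simple and that $\cA$ is recovered from $\Ss(\cA)$ as the unique simple quotient of $T\Ss(\cA)$ not lying in $\mathcal{K}$, giving simplicity and injectivity simultaneously.

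For surjectivity, given $M \in \Irr\bk[\bW]$, I would consider $T(M) \in \Perv_G(\cN,\bk)$, which is finitely generated (hence noetherian in the noetherian category $\Perv_G(\cN,\bk)$) by the presentation~\eqref{eqn:pres2}, and satisfies $\Ss(T(M)) \cong M$. The main obstacle is to produce from $T(M)$ an honest simple object $\cA \in \Perv_G(\cN,\bk)$ with $\Ss(\cA)\cong M$, rather than merely an object with the correct image in the Serre quotient: a general Serre quotient of noetherian abelian categories does \emph{not} have this lifting property (for instance, $\mathbb{Z}\text{-Mod}/(\mathrm{torsion}) \cong \mathbb{Q}\text{-Mod}$ has $\mathbb{Q}$ simple with no lift), so one must exploit the specific structure of our setup. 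Concretely, I would use the projectivity of $\Spr(\bk)$ together with $\End(\Spr(\bk)) \cong \bk[\bW]$: via the projective-Morita dictionary, the simple quotients of $T(M)$ not lying in $\mathcal{K}$ correspond to the simple quotients of $M$ as a $\bk[\bW]$-module, and since $M$ is itself simple, this produces the unique desired $\cA \in \fN_\bk^0$ with $\Ss(\cA) \cong M$, completing the bijection.
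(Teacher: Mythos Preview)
Your proposal is correct and follows essentially the same route as the paper: the forward direction is immediate from the Serre-quotient description (Corollary~\ref{cor:quotient}), and surjectivity is obtained by taking a simple quotient of $T(E)$ and using projectivity of $\Spr(\bk)$. The paper's surjectivity step is phrased slightly more concretely than your ``projective--Morita dictionary'': since $E$ is a quotient of $\bk[\bW]$ and $T$ is right exact, $T(E)$ is a quotient of $T(\bk[\bW])\cong\Spr(\bk)$, so \emph{every} simple quotient $\cA$ of $T(E)$ admits a nonzero map from $\Spr(\bk)$ and hence has $\Ss(\cA)\neq 0$; exactness of $\Ss$ then forces $\Ss(\cA)\cong E$.
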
 
\begin{proof}
The only part which is not immediate from Corollary~\ref{cor:quotient} is the statement that any $E\in\Irr\bk[\bW]$ arises as $\Ss(\cA)$ for some $\cA\in\fN_\bk$. But any such $E$ is a quotient of $\bk[\bW]$ in $\Rep(\bW,\bk)$, hence (since $T$ is right exact) $T(E)$ is a quotient of $T(\bk[\bW])\cong\Spr(\bk)$. Any simple quotient $\cA$ of $T(E)$ is therefore also a quotient of $\Spr(\bk)$, 
hence $\Ss(\cA)$ is nonzero. Since $\Ss$ is exact and $\Ss(T(E))\cong E$ is simple, we conclude that $\Ss(\cA)\cong E$ as required. This shows, incidentally, that any maximal subobject of $T(E)$ belongs to the kernel of $\Ss$, so $T(E)$ has a unique maximal subobject and 
$\cA$ is the head of $T(E)$.
\end{proof}

We can give a formula for the inverse bijection $\Irr\bk[\bW]\simto\fN_\bk^0$. Recall that we have a
surjective group homomorphism $\pi_1(\fg_\rs)\to\bW$, so every
$E\in\Irr\bk[\bW]$ determines a simple $\bk$-local system $\cL_E$ on
$\fg_\rs$. (Here we have to choose a base point in $\fg_\rs$, but this choice is irrelevant for the next proposition.)

\begin{prop} \label{prop:explicit}
For any $E\in\Irr\bk[\bW]$, the element of $\fN_\bk^0$ corresponding
to $E$ is $\bT_\fg(j_\rs)_{!*}\cL_{E}[d]$
\textup{(}or, strictly speaking, the restriction to $\cN$ of this
simple perverse sheaf supported on $\cN$\textup{)}.
\end{prop}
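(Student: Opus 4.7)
The plan is to prove this by computing $\Ss$ directly on the proposed object and invoking the bijection of Corollary~\ref{cor:correspondence}. Set $\widetilde{\cB}_E := \bT_\fg(j_\rs)_{!*}\cL_E[d]$; this is a simple perverse sheaf on $\fg$ since $\cL_E[d]$ is simple on $\fg_\rs$, intermediate extension preserves simplicity, and $\bT_\fg$ is an equivalence.

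The first step is to show that $\widetilde{\cB}_E$ is supported on $\cN$, so that its `restriction' yields a simple perverse sheaf $\cB_E$ on $\cN$. Since $E$ is a simple quotient of $\bk[\bW]$ and the $\bW$-Galois cover $\pi_\rs$ provides an equivalence between $G$-equivariant $\bk$-local systems on $\fg_\rs$ and $\bk[\bW]$-modules, there is a surjection $(\pi_\rs)_!\ub\bk \twoheadrightarrow \cL_E$. Applying $(j_\rs)_{!*}[d]$ (functorial on morphisms of perverse sheaves) and using that the intermediate extension admits no nonzero quotient supported on $\fg\setminus\fg_\rs$, we obtain a surjection $\Groth(\bk) \twoheadrightarrow (j_\rs)_{!*}\cL_E[d]$. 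Applying $\bT_\fg$ and invoking Lemma~\ref{lem:groth-fourier} yields a surjection $i_{\cN!}\Spr(\bk) \twoheadrightarrow \widetilde{\cB}_E$, so $\widetilde{\cB}_E$ is supported on $\cN$.

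The second step is the computation of $\Ss(\cB_E)$. Using full faithfulness of $i_{\cN!}$, Lemma~\ref{lem:groth-fourier}, and the equivalence $\bT_\fg$, one has
\[
\Ss(\cB_E) \cong \Hom_{\Perv_G(\fg,\bk)}\bigl(\Groth(\bk),\, (j_\rs)_{!*}\cL_E[d]\bigr).
\]
For perverse sheaves $A$, $B$ on $\fg_\rs$, restriction induces an isomorphism $\Hom((j_\rs)_{!*}A,(j_\rs)_{!*}B) \simto \Hom(A,B)$: injectivity follows from $(j_\rs)_{!*}B$ having no subobject supported on $\fg\setminus\fg_\rs$, and surjectivity from the functorial lift $(j_\rs)_{!*}\phi$ providing a section of restriction. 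Taking $A = (\pi_\rs)_!\ub\bk[d]$, $B = \cL_E[d]$, and passing through the equivalence between $G$-equivariant local systems on $\fg_\rs$ and $\bk[\bW]$-modules, we arrive at $\Ss(\cB_E) \cong \Hom_{\bk[\bW]}(\bk[\bW],E) \cong E$ as $\bk$-modules.

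The final step, which is also the main technical obstacle, is to verify that this isomorphism $\Ss(\cB_E)\cong E$ is $\bW$-equivariant. The $\bW$-action on $\Ss(\cB_E)$ is given by $w\cdot f = f\circ\dvarphi(w^{-1})$, and by the construction of $\dvarphi^\bk$ via $r_\bk$ and Lemma~\ref{lem:groth-fourier}, this translates through the chain of isomorphisms into the action induced by deck transformations of $\bW$ on $(\pi_\rs)_!\ub\bk$, ultimately yielding the regular $\bk[\bW]$-module structure. The required bookkeeping — showing that the natural action on $E$ appears rather than a variant twisted by $w\mapsto w^{-1}$ — must be done with care, using the variance conventions fixed in \S\ref{subsect:changerings2}. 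Once this $\bW$-equivariant isomorphism $\Ss(\cB_E)\cong E$ in $\Rep(\bW,\bk)$ is established, Corollary~\ref{cor:correspondence} identifies $\cB_E$ as the element of $\fN_\bk^0$ corresponding to $E$, completing the proof.
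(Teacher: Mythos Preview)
Your proposal is correct and follows essentially the same route as the paper: compute $\Ss$ on the candidate via the chain $\Hom(\Spr(\bk),\cB_E)\cong\Hom(\Groth(\bk),(j_\rs)_{!*}\cL_E[d])\cong\Hom(\pi_{\rs!}\ub\bk,\cL_E)\cong\Hom_{\bk[\bW]}(\bk[\bW],E)\cong E$, using Lemma~\ref{lem:groth-fourier}, full faithfulness of $(j_\rs)_{!*}$, and the Galois-cover description of local systems on $\fg_\rs$, then invoke Corollary~\ref{cor:correspondence}. Your additional first step, explicitly verifying that $\bT_\fg(j_\rs)_{!*}\cL_E[d]$ is supported on $\cN$, is a welcome clarification the paper leaves implicit in the parenthetical of the statement.
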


\begin{proof}
The argument is the same as 
in the \'etale setting (see~\cite[\S 6.2.4]{juteau}).
Namely, since $\pi_\rs$ is a Galois covering map with group $\bW$, we have an isomorphism
\begin{equation}
\Hom_{\Sh(\fg_\rs,\bk)}(\pi_{\rs *}\ub\bk,\cL_E)\cong \Hom_{\bk[\bW]}(\bk[\bW],E)\cong E
\end{equation}
of $\bk[\bW]$-modules, where the action of $w\in\bW$ on the middle (resp.~left) term is by precomposing with right multiplication by $w$ (resp.~with ${\tilde r}_{\bk}(w^{-1})$). Since $(j_\rs)_{!*}$ is fully faithful, this induces an isomorphism
\begin{equation}
\Hom_{\Perv(\fg,\bk)}(\Groth(\bk),(j_\rs)_{!*}\cL_E[d])\cong E
\end{equation}
of $\bk[\bW]$-modules. Applying the equivalence $\bT_\fg$ and using Lemma~\ref{lem:groth-fourier}, we deduce an isomorphism of $\bk[\bW]$-modules
\begin{equation}
\Hom_{\Perv_G(\cN,\bk)}(\Spr(\bk),\bT_\fg(j_\rs)_{!*}\cL_E[d])\cong E,
\end{equation}
where $\bW$ acts on $\Spr(\bk)$ via $\dvarphi$. The claim follows.
\end{proof}

\begin{rmk}
Similar descriptions of the left and right adjoints of $\Ss$ in the case that $\bk$ is a field are given by~\cite[Theorem 6.2 and Proposition 7.2]{mautner}. 
\end{rmk}

Previous definitions of the Springer correspondence have required $\bk$ to be a field. 
When $\bk=\Q_\ell$, the above correspondence is Springer's original
version \cite{springer}, differing by a sign twist from Lusztig's reformulation
\cite{lus,bm}. This follows from Lemma \ref{lem:gamma-action} and
Theorem \ref{thm:main-easy}(1).
This `ordinary' Springer correspondence has been determined explicitly,
see~\cite[Section 13.3]{carter} (where Lusztig's version is used).

When $\bk$ is a finite field of characteristic $\ell>0$, there is also
the `modular' Springer correspondence of~\cite[Theorem
  6.2.8]{juteau}. It is given by the same formula as
Proposition~\ref{prop:explicit}, but in the \'etale setting with
Fourier--Deligne transform. The change
from Fourier--Deligne transform to Fourier--Sato transform makes no
difference to the combinatorics of the correspondence, with the
obvious identification of the two versions of $\fN_\bk$.
Indeed, by \cite[Section 3]{jls}, the modular Springer correspondence
is completely determined by the ordinary Springer correspondence and
the decomposition matrix of $\bW$, and the results used to prove this, such
as \cite[Theorem 6.3.2]{juteau}, are equally valid for the Fourier-Sato
transform. (The fact that the decomposition matrix is unknown in
general is irrelevant for this uniqueness argument.) 

The modular Springer correspondence has been explicitly
determined in \cite{juteau} for types $A_n$ and $G_2$, in
\cite{jls} for classical types when $\ell \neq 2$, and in as-yet
unpublished tables for the remaining exceptional types.

\subsection{Zero weight spaces of small representations}

Assume now that $\bk$ is a field and that $G$ is simple and simply-connected. Let $\Gv$ denote the split connected reductive group over $\bk$ whose root datum is dual to that of $G$ (so that $\Gv$ is simple of adjoint type). For any dominant coweight $\lambda$ of $G$, there is an irreducible representation $L(\lambda)$ of $\Gv$ with highest weight $\lambda$. The zero weight space $L(\lambda)_0$ carries a representation of the Weyl group of $\Gv$, which can be identified with $\bW$.

The main result of~\cite{ahr} determined the representation of $\bW$
on $L(\lambda)_0$ in the case where $\lambda$ is \emph{small} (meaning
that the convex hull of $\bW\lambda$ does not contain
$2\check{\alpha}$ for any root $\alpha$). Namely, we have the
following isomorphism, obtained from~\cite[(1.3)]{ahr} by tensoring
both sides by the sign character (recall that in the present paper,
$\Ss$ means $\Ss_{\dvarphi}$ whereas in \cite{ahr} it means $\Ss_{\drho}$):
\begin{equation} \label{eqn:zerowtspace}
L(\lambda)_0\cong \Ss(\pi_*(\IC(\Gr^\lambda,\bk)|_{\cM})).
\end{equation}  
Here $\Gr^\lambda$ is the orbit in the affine Grassmannian of $G$ labelled by $\lambda$, $\cM$ is the $G$-stable locally closed subvariety of the affine Grassmannian defined in~\cite{ah} (which intersects $\overline{\Gr^\lambda}$ in an open dense subvariety for all small $\lambda$), and $\pi:\cM\to\cN$ is the $G$-equivariant finite map defined and described in~\cite{ah}. (We follow the notation of~\cite{ah} for this finite map since we no longer need the Grothendieck--Springer map denoted $\pi$ in previous sections.)

We can now express the right-hand side of~\eqref{eqn:zerowtspace} in terms of the Springer correspondence of Corollary~\ref{cor:correspondence}. The nilpotent orbits contained in $\pi(\Gr^\lambda\cap\cM)$ for each small $\lambda$ are described explicitly in~\cite[Table 1, Table 6]{ah}, and we have the following dichotomy~\cite[Proposition 3.2]{ah}:
\begin{prop} \label{prop:dichotomy}
For any small coweight $\lambda$ of $G$, one of the following occurs.
\begin{enumerate}
\item $\pi(\Gr^\lambda\cap\cM)$ is a single $G$-orbit $\cO_\lambda$, and $\pi$ restricts to an isomorphism $\Gr^\lambda\cap\cM\simto\cO_\lambda$.
\item $\pi(\Gr^\lambda\cap\cM)$ is the union of two $G$-orbits, of which one, say $\cO_\lambda$, contains the other in its closure, and $\pi$ restricts to a non-trivial $2:1$ Galois covering $U_\lambda\to\cO_\lambda$, where $U_\lambda$ is open in $\Gr^\lambda\cap\cM$.
\end{enumerate}
\end{prop}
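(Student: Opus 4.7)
I would structure the proof around the fact that $\pi \colon \cM \to \cN$ is $G$-equivariant and finite, so that all the relevant information is encoded as covering-theoretic data over a $G$-orbit stratification. Since $\pi$ is finite and $G$-equivariant, the image $\pi(\Gr^\lambda \cap \cM)$ is a $G$-stable constructible subset, hence a finite union of nilpotent orbits. Using the description of $\Gr^\lambda \cap \cM$ from \cite{ah} (which one expects to yield a single irreducible $G$-stable locus), there is a unique top orbit $\cO_\lambda$ in the image, over which $\pi$ restricts to a finite $G$-equivariant \'etale covering on an open subset $U_\lambda \subseteq \Gr^\lambda \cap \cM$. Such connected equivariant \'etale covers of $\cO_\lambda$ are classified by subgroups of the component group $A_\lambda = Z_G(x)/Z_G(x)^\circ$ for $x \in \cO_\lambda$.

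The second step is to use smallness to pin down the degree of this covering. The hypothesis that the convex hull of $\bW\lambda$ does not contain $2\check\alpha$ for any root $\alpha$ restricts the orbits $\Gr^\mu \subset \overline{\Gr^\lambda}$ and, via the interaction with $\cM$ described in \cite{ah}, should force the generic fibre of $\pi|_{\Gr^\lambda\cap\cM}$ to have at most two elements. A connected \'etale cover of degree $2$ is automatically Galois (its deck group is forced to be $\Z/2\Z$), so if the cover is non-trivial one lands in case~(2); otherwise $\pi$ is an isomorphism onto a single orbit and one is in case~(1). When case~(2) occurs, the second $G$-orbit in $\pi(\Gr^\lambda \cap \cM)$ appears as the image of a smaller-dimensional $G$-stable closed subset of $\Gr^\lambda \cap \cM$ where the covering degenerates, i.e.~as the branch locus.

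The third step is to complete the argument by verifying the two claims just sketched: that the image contains at most two orbits, and that the generic covering degree is at most $2$. The cleanest route I see is to use the classification of small coweights for $G$ simple and simply-connected, which is finite type-by-type (it is tabulated explicitly in \cite[Tables 1, 6]{ah}), and for each $\lambda$ compute the orbits in $\pi(\Gr^\lambda \cap \cM)$ and the fibres of $\pi$ directly from the explicit description of the map in \cite{ah}. The main obstacle will be the absence of a fully uniform, conceptual reason that rules out higher-degree covers or three or more orbits in the image; short of such a uniform argument, one is reduced to a type-by-type check. A conceptual reformulation, if available, would likely proceed by identifying the relevant quotient of $A_\lambda$ acting on the fibre of $\pi$ with a quotient of the stabilizer of $\lambda$ in $\bW$, and showing that smallness constrains this quotient to order at most $2$.
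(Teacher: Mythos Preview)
Your outline is reasonable, but there is nothing in the present paper to compare it to: the proposition is not proved here at all. It is quoted verbatim from \cite[Proposition~3.2]{ah}, as the sentence immediately preceding it makes explicit. The paper treats it as an input, not as something to be established.

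Your sketch is in fact a plausible reconstruction of how the argument in \cite{ah} goes. In particular, your third step---reducing to the finite list of small coweights and reading off the orbits and fibres from the tables in \cite{ah}---is exactly the mechanism by which that reference establishes the dichotomy. Your first two steps (equivariance and finiteness of $\pi$, classification of connected equivariant covers by subgroups of the component group, and the observation that a degree-$2$ connected cover is automatically Galois) are the right structural scaffolding, but as you yourself note, they do not by themselves bound the degree by~$2$ or the number of orbits in the image; that bound comes from the case-by-case inspection. So your ``main obstacle'' is real, and the resolution is precisely the one you propose: check the tables.
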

\begin{cor} \label{cor:dichotomy}
In the two cases of Proposition~{\rm \ref{prop:dichotomy}}, we have, respectively:
\begin{enumerate}
\item $L(\lambda)_0\cong\Ss(\IC(\cO_\lambda,\bk))$. Hence $L(\lambda)_0$ is either simple or zero.
\item Assuming that the characteristic of $\bk$ is not $2$, $L(\lambda)_0\cong \Ss(\IC(\cO_\lambda,\bk))\oplus\Ss(\IC(\cO_\lambda,\cL))$ where $\cL$ denotes the nontrivial rank-one $G$-equivariant $\bk$-local system on $\cO_\lambda$ arising from the Galois covering $U_\lambda\to\cO_\lambda$. Hence $L(\lambda)_0$ is either the direct sum of two nonisomorphic simples, simple, or zero.
\end{enumerate}
\end{cor}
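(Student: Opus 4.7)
The plan is to substitute the description of $\pi|_{\Gr^\lambda \cap \cM}$ from Proposition~\ref{prop:dichotomy} into formula~\eqref{eqn:zerowtspace}, identify the resulting perverse sheaf on $\cN$ as a direct sum of simple IC sheaves supported on $\overline{\cO_\lambda}$, and then conclude using Corollary~\ref{cor:correspondence}.

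The main general tool I will invoke is that since $\pi:\cM\to\cN$ is a finite (hence small) map, $\pi_*$ is $t$-exact for the perverse $t$-structure and commutes with intermediate extension. Consequently, if $j:\cO_\lambda\hookrightarrow\overline{\cO_\lambda}$ denotes the open inclusion, then
\[
\pi_*\bigl(\IC(\Gr^\lambda,\bk)|_\cM\bigr) \;\cong\; j_{!*}\bigl((\pi|_{U_\lambda})_*\ub\bk_{U_\lambda}[\dim\cO_\lambda]\bigr),
\]
so the identification reduces to analysing the pushforward of the constant sheaf along $\pi|_{U_\lambda}:U_\lambda\to\cO_\lambda$.

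In case (1), $\pi|_{U_\lambda}$ is an isomorphism, so the right-hand side is $\IC(\cO_\lambda,\bk)$. Substituting into~\eqref{eqn:zerowtspace} yields $L(\lambda)_0\cong\Ss(\IC(\cO_\lambda,\bk))$, which by Corollary~\ref{cor:correspondence} is a simple $\bk[\bW]$-module or zero. In case (2), $\pi|_{U_\lambda}$ is a finite étale $\Z/2$-Galois covering. Since $\mathrm{char}\,\bk\neq 2$, averaging over the deck group splits $(\pi|_{U_\lambda})_*\ub\bk_{U_\lambda}\cong\ub\bk_{\cO_\lambda}\oplus\cL$ as $G$-equivariant local systems on $\cO_\lambda$. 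Passing to intermediate extensions gives
\[
\pi_*\bigl(\IC(\Gr^\lambda,\bk)|_\cM\bigr) \;\cong\; \IC(\cO_\lambda,\bk)\oplus\IC(\cO_\lambda,\cL),
\]
and applying the exact functor $\Ss$ to~\eqref{eqn:zerowtspace} produces the stated decomposition. By Corollary~\ref{cor:correspondence}, each summand is simple or zero, and when both are nonzero the bijection $\fN_\bk^0\simto\Irr\bk[\bW]$ ensures that the two simple $\bk[\bW]$-modules are nonisomorphic, because $\IC(\cO_\lambda,\bk)$ and $\IC(\cO_\lambda,\cL)$ are nonisomorphic objects of $\fN_\bk$.

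The main obstacle I anticipate is justifying the commutation of the finite pushforward $\pi_*$ with the intermediate extension $j_{!*}$ used above. This is a standard property of small maps with arbitrary coefficients, and once granted it reduces the entire calculation to the local analysis of $\pi$ on the preimage of the open orbit $\cO_\lambda$, where the dichotomy of Proposition~\ref{prop:dichotomy} takes full effect; the remaining splitting in case (2) then depends only on the elementary averaging argument that requires $\mathrm{char}\,\bk\neq 2$.
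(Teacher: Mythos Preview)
Your proposal is correct and follows essentially the same route as the paper: both identify $\pi_*(\IC(\Gr^\lambda,\bk)|_\cM)$ as $\IC(\cO_\lambda,\bk)$ in case~(1) and as $\IC(\cO_\lambda,\bk)\oplus\IC(\cO_\lambda,\cL)$ in case~(2), then apply~\eqref{eqn:zerowtspace} and Corollary~\ref{cor:correspondence}. You have simply made explicit the mechanism behind the paper's phrase ``the finiteness of $\pi$ \dots\ implies'': namely, that a finite map is small, so the pushforward of the IC sheaf is the intermediate extension of the pushforward over the open orbit $\cO_\lambda$, reducing everything to the local analysis of $\pi|_{U_\lambda}$; and you have spelled out the averaging argument that splits the rank-two local system when $\mathrm{char}\,\bk\neq 2$. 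Your final remark, that the two summands are nonisomorphic because the bijection of Corollary~\ref{cor:correspondence} separates $\IC(\cO_\lambda,\bk)$ from $\IC(\cO_\lambda,\cL)$, is a point the paper leaves implicit but which is indeed needed for the ``nonisomorphic'' clause in the statement.
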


\begin{proof}
In the first case, the finiteness of $\pi$ and Proposition~\ref{prop:dichotomy}(1) imply that
\begin{equation*} 
\pi_*(\IC(\Gr^\lambda,\bk)|_{\cM})\cong\IC(\cO_\lambda,\bk).
\end{equation*}
In the second case, the finiteness of $\pi$, the assumption that the characteristic of $\bk$ is not $2$, and Proposition~\ref{prop:dichotomy}(2) imply that
\begin{equation*} 
\pi_*(\IC(\Gr^\lambda,\bk)|_{\cM})\cong\IC(\cO_\lambda,\bk)\oplus\IC(\cO_\lambda,\cL).
\end{equation*}
The result now follows from~\eqref{eqn:zerowtspace} and Corollary~\ref{cor:correspondence}.
\end{proof}

\begin{rmk}
In the situation of Corollary~\ref{cor:dichotomy}(2), the
$G$-equivariant fundamental group of $\cO_\lambda$ is either $\Z/2\Z$
or $S_3$ (the latter occurring only in the case of
Example~\ref{ex:g2} below). Hence $\cL$ is in fact the unique nontrivial
rank-one $G$-equivariant local system on $\cO_\lambda$.
\end{rmk}

If $\mathrm{char}(\bk)=0$,
Corollary \ref{cor:dichotomy}
recovers the calculations in \cite{R1,R2}, as 
noted in \cite{ahr}.
Now we consider the case where 
$\mathrm{char}(\bk)=\ell > 0$.

\begin{ex}
If $G = SL_n$, we are always in case $(1)$. Let $\lambda = (a_1, \ldots, a_n)$
where $a_1 \geq \ldots \geq a_n \geq -1$ and $\sum_{i = 1}^n a_i = 0$.
Let $\hat \lambda = (a_1 + 1, \ldots, a_n + 1)$ be the associated
partition of $n$. Then the orbit denoted $\cO_\lambda$ above
consists of nilpotent matrices with Jordan type $\hat\lambda$.
Using the determination of
the modular Springer correspondence in this case \cite[Theorem
  6.4.1]{juteau}, we recover the well-known fact \cite[A.23(5)]{jantzen1}
that $L(\lambda)_0 \simeq \DB^{{\hat \lambda}^t}$ when
$\lambda$ is $\ell$-restricted, and $0$ otherwise. (We use the standard notation for the simple
modules of $\bk[S_n]$.) 
\end{ex}

\begin{ex}
Suppose $G$ is of type $B$, $C$ or $D$ and
$\ell > 2$. The modular Springer correspondence for this case has been determined in
\cite[Theorem 4.21]{jls}. Combining that result with \cite[Table
  1]{ah}, we obtain the explicit description of zero weight spaces
of small representations displayed in Table \ref{table}. We use the
notation of \cite{ah} for the small coweights, and the
notation of \cite{jls} for the simple $\bk[\bW]$-modules.
If $\ell = 3$ and $G$ is of type $B_3$, one should interpret
$\DB^{((1^3),\emptyset)}$ as $0$. Among all the cases considered in
the present example, this is the only time that one of the
intersection cohomology complexes appearing in Corollary
\ref{cor:dichotomy} is killed by $\Ss$.
\end{ex}

\begin{table}[hbt]
\[
\begin{array}{|c|cc|c|}
\hline
G
& \lambda
&
& L(\lambda)_0
\\
\hline
B_n
& (2 1^{2j} 0^{n - 2j - 1})
& (\text{for } 1\leq j \leq \frac{n - 1}{2})
& \DB^{((n - j - 1, j), (1))}
\oplus\ \DB^{((n - j - 1, j, 1), \emptyset)}\\
\cline{2-4}
& (2 0^{n - 1})
&
& \DB^{((n - 1), (1))}
\\
\cline{2-4}
& (1^{2j} 0^{2n - j})
&
& \DB^{((n - j, j), \emptyset)}
\\
\hline
C_n
& (1^j 0^{n - j})
&
& \DB^{((n - \frac j 2), (\frac j 2))} \text{ if } j \text{ even}\\
&&& \DB^{((\frac{j-1}2), (n - \frac{j-1}2))} \text{ if } j \text{ odd}
\\
\hline
D_n
& (2 1^{n - 2} (\pm 1))
& (\text{if } n \text{ odd})
& \EB^{[(\frac{n-1}2, 1), (\frac{n-1}2)]}
\\
\cline{2-4}
& (2 1^{2j} 0^{n - 2j - 1})
& (\text{for } 1\leq j < \frac{n - 1}{2})
& \EB^{[(n - j - 1, 1), (j)]}\ \oplus
\EB^{[(n - j - 1), (j, 1)]}
\\
\cline{2-4}
& (2 0^{n - 1})
&
& \EB^{[(n - 1, 1), \emptyset]}
\\
\cline{2-4}
& (1^{n - 1} (\pm 1))
& (\text{if } n \text{ even})
& \EB^{[(\frac n 2), \pm]}
\\
\cline{2-4}
& (1^{2j} 0^{n - 2j})
& (\text{for } 0\leq j < \frac{n}{2})
& \EB^{[(n - j), (j)]}
\\
\hline
\end{array}
\]
\caption{Zero weight spaces of small representations for classical groups}\label{table}
\end{table}

\begin{ex} \label{ex:g2}
Assume $G$ is of type $G_2$ and $\ell > 2$. The modular Springer
correspondence for this type is described in~\cite[\S 7.7]{juteau}. 
Let $\lambda$ be the higher fundamental coweight, so
that $L(\lambda)$ is either the adjoint representation of $\Gv$ or its
unique simple quotient if $\ell=3$. We are in case (2),
$\cO_\lambda$ is the subregular orbit,
$\Ss(\IC(\cO_\lambda,\cL))=0$, and accordingly
$L(\lambda)_0\cong\Ss(\IC(\cO_\lambda,\bk))$
(of dimension $1$ if $\ell = 3$ and $2$
if $\ell > 3$).
\end{ex}

\begin{ex}
Among the cases where $G$ is of exceptional type and $\ell > 2$, there
are only two occasions other than
that described in the preceding example where $\Ss$
kills one of the intersection cohomology complexes appearing in
Corollary \ref{cor:dichotomy}. Namely, if
$\lambda = 3\wt_1$ or $3\wt_6$ in type $E_6$ (in the numbering of
\cite{bourbaki}), Corollary \ref{cor:dichotomy}(1) says that
$L(\lambda)_0  \simeq \Ss(\IC(2A_2,\bk))$.
If $\ell = 3$ the latter vanishes; in this case $L(\lambda)$ is the
Frobenius twist of a minuscule representation.
\end{ex}

Since there are only finitely many small coweights in the exceptional
types, one can compute in every case the image of $L(\lambda)_0$ in
the Grothendieck group of $\Rep(\bW,\bk)$, using the known
answer in characteristic zero \cite{R1,R2} together with the known
decomposition matrices for the small representations of $\Gv$
\cite{LUB} and the decomposition matrices for the exceptional Weyl groups
which are available in GAP3.
(The latter decomposition matrices were first
computed in \cite{Khos} for $\ell = 3$ and \cite{KM} for $\ell > 3$.) 
As predicted by Corollary \ref{cor:dichotomy}, when $\ell > 2$
one finds that $L(\lambda)_0$ is multiplicity-free and has at most two
simple constituents. Note that
in the case where two simple constituents appear, Corollary
\ref{cor:dichotomy}(2) gives the additional information that
$L(\lambda)_0$ is the direct sum of the two.

If we are in the situation of Proposition
\ref{prop:dichotomy}(2) and $\ell = 2$, it is possible for
$L(\lambda)_0$ to have length greater than $2$. (For example,
in type $E_7$ the $\bk[\bW]$-module $L(\wt_3)_0$ has four distinct simple
constituents, three of which appear with multiplicity $2$, and the
last one with multiplicity $1$.)
In geometric terms, the reason that
characteristic $2$ is different is that the sheaf
$(\pi|_{U_\lambda})_*\ub\bk_{U_\lambda}$ is a non-split extension of
$\ub\bk_{\cO_\lambda}$ by itself. Since the intermediate
extension functor preserves socles and heads~\cite[Proposition
  2.28]{juteau-aif}, one can conclude that
$\pi_*(\IC(\Gr^\lambda,\bk)|_{\cM})$ has socle and head equal to
$\IC(\cO_\lambda,\bk)$. However, it may also have additional simple
constituents in the middle, supported on
$\overline{\cO_\lambda}\setminus\cO_\lambda$. When one then applies the functor $\Ss$ to obtain
$L(\lambda)_0$,
diverse situations occur: there are cases where the head and socle are
killed by $\Ss$ and the constituents in the middle remain, and others where only the head and
socle survive.

\section{\'Etale version}
\label{sect:etale}

The arguments of this paper can be adapted to prove an analogue of the main result in the \'etale topology over a finite field, substituting the Fourier--Deligne transform for the Fourier--Sato transform. In fact, the `change of rings' operation is simpler (since it is at the heart of the construction of the derived categories); in particular we don't need the functorial point of view. In this section, we describe the set-up, and we briefly indicate some of the points that require special attention.

\subsection{Derived categories of sheaves}

Fix a finite field $\F_q$ of characteristic $p$, and a prime number $\ell \ne p$.  Let $X$ be a variety over $\F_q$.

We say that a ring $\bk$ is \emph{admissible} if it is isomorphic to a finite integral extension of some $\Z/\ell^n\Z$ or of $\Z_\ell$, or else to a (possibly infinite) integral extension of $\Z_\ell$ or $\Q_\ell$.  For each of these rings, there is a triangulated category $\Dbc(X,\bk)$, to be thought of as the category of `constructible complexes of $\bk$-sheaves'.  The definition of $\Dbc(X,\bk)$ is rather complicated when $\bk$ is not a finite ring; see~\cite[Appendix~A]{kw} for an exposition.  One special case will be recalled in Section~\ref{subsect:changerings-et}.
In each case, $\Dbc(X,\bk)$ has a `natural' $t$-structure and a perverse $t$-structure, with hearts 
$\Shc(X,\bk)$ and $\Perv(X,\bk)$, respectively.  
We have a constant sheaf $\ub \bk_X \in \Shc(X,\bk)$.

Suppose now that $\psi: \F_q \to \bk^\times$ is a nontrivial homomorphism. If $E$ and $E^*$ are dual vector bundles over an $\F_q$-variety $Y$, there is a \emph{Fourier--Deligne transform}, denoted $\bT^\psi_E: \Dbc(E,\bk) \simto \Dbc(E^*,\bk)$.  Its theory is developed in the generality we need in~\cite[Chap.~5]{juteau}.  

\subsection{Springer and Grothendieck functors}

Let $G$ be a split connected reductive group over $\F_q$.  The varieties $\cB$, $\cN$, $\tcN$, etc., are then also defined over $\F_q$, and we can define objects
\[
\Groth(\bk) \in \Perv(\fg,\bk), \qquad \Spr(\bk) \in \Perv(\cN,\bk).
\] 
We also assume that $\fg$ can be equipped with a $G$-invariant nondegenerate bilinear form, and we fix such a form.  (This assumption imposes a constraint on $p$; see~\cite[\S 2.5]{letellier}.)  Having identified $\fg \cong \fg^*$, we regard $\bT^\psi_\fg$ as an autoequivalence
\[
\bT^\psi_\fg: \Dbc(\fg,\bk) \simto \Dbc(\fg,\bk).
\]
From~\cite[\S6.2]{juteau}, we have \'etale analogues of Lemmas~\ref{lem:groth-res} and~\ref{lem:groth-fourier}, although the latter now includes a Tate twist:
\begin{align*}
i_\cN^* \Groth(\bk)[-r] &\cong \Spr(\bk), &
\bT^\psi_\fg \circ \Groth(\bk) &\cong i_{\cN!} \circ \Spr(\bk)(-N-r).
\end{align*}
One can then define analogues of $\drho^\bk$ and $\dvarphi^\bk$, see~\cite[Theorem~6.2.1, Corollary~6.2.5]{juteau}.

\subsection{Change of rings}
\label{subsect:changerings-et}

Let us assume that $\bk$ is a finite integral extension of $\Z_\ell$, with maximal ideal $\fm$.  Any quotient $\bk/\fm^n$ is a finite ring.  For $n \ge 1$, we consider the full triangulated subcategory $\Dbctf(X,\bk/\fm^n) \subset \Dbc(X,\bk/\fm^n)$ whose objects are quasiisomorphic to a bounded complex of flat sheaves of $\bk/\fm^n$-modules.  Then
\begin{equation}\label{eqn:inv-limit}
\Dbc(X,\bk) = 
\varprojlim_n
\Dbctf(X,\bk/\fm^n)
\end{equation}
by definition. If $\bk'$ is any quotient of $\bk$, we define the functor
\[
\Ex^X_{\bk,\bk'}: \Dbc(X,\bk) \to \Dbc(X,\bk')
\]
to be the composition of the canonical functor $\Dbc(X,\bk) \to \Dbctf(X,\bk')$ with the inclusion $\Dbctf(X,\bk') \to \Dbc(X,\bk')$. This functor is an `extension of scalars' functor rather than a forgetful functor as in \S\ref{subsect:changerings}. Sheaf functors $(\cdot)_!$, $(\cdot)^*$ and the tensor product on $\Dbc(\cdot,\bk)$, as well as the usual isomorphisms between them (composition, base-change, projection formula) are defined termwise with respect to the inverse limit~\eqref{eqn:inv-limit}, so that they are compatible with the functors $\Ex_{\bk,\bk'}$ in the natural sense. Similar remarks apply to the Fourier--Deligne transform (which is defined in terms of the preceding operations) and its compatibility properties similar to \eqref{eqn:fourier^*}, \eqref{eqn:fourier_!}. Moreover, one clearly has $\Ex^X_{\bk,\bk'}(\ub \bk_X) = \ub {\bk'}_X$.

The following result follows from these remarks:

\begin{prop}
\label{prop:scalars-et}
Suppose that $\bk$ is a finite integral extension of $\Z_\ell$, and that $\bk'$ is a quotient of $\bk$.
Then there is a canonical $\bk$-algebra homomorphism
\[
\End(\Spr(\bk)) \to \End(\Spr({\bk'}))
\]
induced by the functor $\Ex^\cN_{\bk,\bk'}$. Moreover,
the following diagrams commute:
\[
\vcenter{\xymatrix@R=0.6cm{
\bk[\bW] \ar[d]\ar[r]^-{\drho^\bk} & \End(\Spr(\bk)) \ar[d] \\
\bk'[\bW] \ar[r]^-{\drho^{\bk'}} & \End(\Spr({\bk'}))}}
\qquad\qquad
\vcenter{\xymatrix@R=0.6cm{
\bk[\bW] \ar[d]\ar[r]^-{\dvarphi^\bk} & \End(\Spr(\bk)) \ar[d] \\
\bk'[\bW] \ar[r]^-{\dvarphi^{\bk'}} & \End(\Spr({\bk'}))}}
\]
\end{prop}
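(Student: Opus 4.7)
The plan is to exploit the fact that, unlike the restriction-of-scalars functor used in the Fourier--Sato setting of Section~\ref{sect:restrict}, the extension-of-scalars functor $\Ex^X_{\bk,\bk'}$ directly sends $\Spr(\bk)$ to $\Spr(\bk')$ (up to canonical isomorphism), so it induces a ring homomorphism on endomorphism algebras without needing the functorial framework. Accordingly, one bypasses the machinery of \S\ref{sect:restrict} entirely; the functorial viewpoint was only needed because $\For^X_{\bk,\bk'}$ does not commute with the constant-sheaf construction in the appropriate way, whereas $\Ex^X_{\bk,\bk'}$ does: $\Ex^X_{\bk,\bk'}(\ub\bk_X) = \ub{\bk'}_X$.

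First, I would establish a canonical isomorphism
\[
\Ex^\cN_{\bk,\bk'}(\Spr(\bk)) \simto \Spr(\bk'),
\]
by combining the identity $\Ex^\cN_{\bk,\bk'}\circ \mu_! \cong \mu_!\circ \Ex^{\tcN}_{\bk,\bk'}$ (compatibility of $\Ex$ with pushforward) with the constant-sheaf identity $\Ex^{\tcN}_{\bk,\bk'}(\ub\bk_{\tcN}) = \ub{\bk'}_{\tcN}$. Applying $\Ex^\cN_{\bk,\bk'}$ to endomorphisms and conjugating by this canonical isomorphism defines the desired $\bk$-algebra map $\End(\Spr(\bk))\to\End(\Spr(\bk'))$. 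An analogous construction produces a compatible map $\End(\Groth(\bk))\to\End(\Groth(\bk'))$.

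Next, for the diagram involving $\drho$, I would use the \'etale analogue of Lemma~\ref{lem:groth-res}, namely $i_\cN^*\Groth(\bk)[-r]\cong \Spr(\bk)$, and observe that the base-change isomorphism used in its proof is compatible with $\Ex_{\bk,\bk'}$ (since all the relevant sheaf functors and their natural isomorphisms are defined termwise with respect to the inverse limit \eqref{eqn:inv-limit}). Combined with the fact that the $\bW$-action on $\pi_{\rs *}\ub\bk$ is obviously compatible with $\Ex_{\bk,\bk'}$ (it is induced by the covering structure, independently of the coefficient ring), this gives the commutativity of the $\drho$-diagram. For the $\dvarphi$-diagram, the same strategy applies, replacing the base-change isomorphism with the Fourier--Deligne isomorphism $\bT^\psi_\fg\circ\Groth(\bk)\cong i_{\cN !}\circ\Spr(\bk)(-N-r)$; compatibility of $\bT^\psi$ with $\Ex_{\bk,\bk'}$ again follows from the termwise nature of the construction of $\bT^\psi$ (it is built from $(-)_!$, $(-)^*$ and tensor product, each of which is so compatible).

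There is essentially no genuine obstacle here: the entire content of the proposition is that each constituent natural isomorphism in the definitions of $\drho^\bk$ and $\dvarphi^\bk$ commutes with $\Ex_{\bk,\bk'}$, and this is built into the construction of $\Dbc(X,\bk)$ via the inverse limit. The most delicate point to verify carefully is that the identification of the $\bW$-action on $\pi_{\rs *}\ub\bk'$ obtained from $\Ex_{\bk,\bk'}$ applied to the $\bk$-version agrees with the intrinsic $\bW$-action defined directly at the level of $\bk'$; but this follows immediately from the fact that both come from the Galois group of the covering $\pi_\rs$ acting on the constant sheaf, together with $\Ex^{\fg_\rs}_{\bk,\bk'}(\ub\bk_{\fg_\rs}) = \ub{\bk'}_{\fg_\rs}$. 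I would therefore present the proof as a short verification, referring to the general compatibility of $\Ex_{\bk,\bk'}$ with sheaf operations and to the analogues of Lemmas~\ref{lem:groth-res} and~\ref{lem:groth-fourier} in the \'etale setting cited in the text.
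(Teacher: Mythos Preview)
Your proposal is correct and matches the paper's approach exactly: the paper does not give a formal proof of Proposition~\ref{prop:scalars-et} at all, stating only that it ``follows from these remarks'' (the preceding paragraph on termwise compatibility of $\Ex_{\bk,\bk'}$ with $(\cdot)_!$, $(\cdot)^*$, tensor product, base-change, composition, Fourier--Deligne, and the identity $\Ex^X_{\bk,\bk'}(\ub\bk_X)=\ub{\bk'}_X$). Your write-up is a faithful unpacking of precisely those remarks, including the observation that the functorial machinery of Section~\ref{sect:restrict} is unnecessary here because extension of scalars (unlike restriction) sends $\Spr(\bk)$ directly to $\Spr(\bk')$.
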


\subsection{Main result}

The following statement is analogous to Theorem~\ref{thm:main}.

\begin{thm}\label{thm:main-et}

Let $\bk$ be an admissible ring.
The two maps $\drho^\bk, \dvarphi^\bk: \bk[\bW] \to \End(\Spr(\bk))$ are related as follows:
$\drho^\bk = \dvarphi^\bk \circ \sgn$.
\end{thm}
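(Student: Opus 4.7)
The plan is to mirror the strategy for Theorem~\ref{thm:main}: first establish an ``easy case'' where $H^\bullet(\cB;\bk)$ is a faithful $\bk[\bW]$-module, then reduce every other admissible $\bk$ to it by change of rings. In the étale setting, extension of scalars is built into the derived categories, so the functorial formalism of Sections~\ref{sect:prelim}--\ref{sect:restrict} is not needed: Proposition~\ref{prop:scalars-et} already handles quotient morphisms, and one supplements it by inverting $\ell$.

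The easy case will cover every integral extension of $\Q_\ell$. For these I would adapt Section~\ref{sect:thm-Q} directly, substituting $\ell$-adic étale cohomology for singular cohomology and the Fourier--Deligne transform for the Fourier--Sato transform. The geometric inputs---the paving of $\cB$ by affine cells, the fibration $G/T\to\cB$, Poincaré duality, the Chow-ring description $\bk\otimes_\Z A(\cB)\cong H^\bullet(\cB;\bk)$, and the computation of the $\bW$-action on the top class---all have well-known $\ell$-adic analogues over $\F_q$, so Lemmas~\ref{lem:cohomology-B}--\ref{lem:sigma-action} and Theorem~\ref{thm:main-easy} carry over. The only bookkeeping subtlety is the Tate twist in the étale version of Lemma~\ref{lem:groth-fourier}, which affects neither the $\bW$-actions nor the final identity.

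The remaining admissible rings are handled in two subcases. For $\bk$ a finite or infinite integral extension of $\Z_\ell$, I would embed $\bk$ in its total ring of fractions $K$, which is an integral extension of $\Q_\ell$ and hence covered by the easy case. The natural localization functor $\Dbc(\cN,\bk)\to\Dbc(\cN,K)$, defined termwise on the inverse limit~\eqref{eqn:inv-limit}, induces a ring morphism $\End(\Spr(\bk))\to\End(\Spr(K))$ that intertwines $\drho^\bk$ with $\drho^K$ and $\dvarphi^\bk$ with $\dvarphi^K$ in the manner of Proposition~\ref{prop:scalars-et}, and that is injective because the local system $\pi_{\rs*}\ub\bk$ on $\fg_\rs$ is $\bk$-locally-free of rank $|\bW|$. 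For $\bk$ a finite integral extension of some $\Z/\ell^n\Z$, realize $\bk$ as a quotient of a finite integral extension $\bk_0$ of $\Z_\ell$ (a ramified Witt-vector ring on each local factor) and apply Proposition~\ref{prop:scalars-et} together with the preceding subcase applied to $\bk_0$.

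The main obstacle is the first stage: although each individual ingredient of Section~\ref{sect:thm-Q} has a well-known $\ell$-adic analogue, assembling them with the correct shifts and Tate twists and tracing through the compatibility diagrams of Lemma~\ref{lem:sigma-action} is a nontrivial bookkeeping exercise. A secondary technical point is confirming that the localization functor $\Dbc(\cN,\bk)\to\Dbc(\cN,K)$ enjoys all the compatibility properties needed for the $\drho$- and $\dvarphi$-diagrams to commute, given the inverse-limit definition of $\Dbc(\cN,\bk)$.
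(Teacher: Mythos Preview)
Your strategy matches the paper's: reduce via Proposition~\ref{prop:scalars-et} to rings where the classical Section~\ref{sect:thm-Q} argument applies. The one difference is that you restrict the ``easy case'' to integral extensions of $\Q_\ell$ and then handle integral extensions of $\Z_\ell$ separately via a localization-to-$K$ argument. The paper is shorter here: an integral extension of $\Z_\ell$ already embeds in its fraction field, which is a $\Q$-algebra, so the hypothesis of Theorem~\ref{thm:main-easy} is satisfied and the Section~\ref{sect:thm-Q} argument applies to it directly. Thus the paper's proof simply says that after Proposition~\ref{prop:scalars-et} it remains to treat integral extensions of $\Z_\ell$ and algebraic extensions of $\Q_\ell$, and both are covered by the classical arguments. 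Your localization detour is correct in spirit but unnecessary.

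One small technical correction if you do keep the localization step: the passage $\Dbc(X,\bk)\to\Dbc(X,K)$ for $K=\mathrm{Frac}(\bk)$ is not literally ``termwise on the inverse limit~\eqref{eqn:inv-limit}''---each $\bk/\fm^n$ is torsion and localizes to zero. In the $\ell$-adic formalism this functor is defined by inverting $\ell$ (equivalently, as a Verdier quotient by torsion objects), as in~\cite[Appendix~A]{kw}. Once set up correctly it has the compatibilities you need, and your injectivity argument via $\pi_{\rs*}\ub\bk$ is fine.
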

\begin{proof}
In view of Proposition~\ref{prop:scalars-et}, it suffices to treat the cases where $\bk$ is either an integral extension of $\Z_\ell$ or an algebraic extension of $\Q_\ell$.  Those cases are covered by the classical arguments explained in Section~\ref{sect:thm-Q}.
\end{proof}

\appendix
\section{Isomorphisms of sheaf functors}
\label{sect:isomsheaf}

In this appendix, we collect a few lemmas on compatibility of various isomorphisms of sheaf functors with change of rings, and compatibility between isomorphisms arising from Fourier--Sato transform. Throughout, $\bk'$ denotes a $\bk$-algebra.

\subsection{Composition and change of rings}

Suppose we have two maps
\begin{equation}\label{eqn:comp-setting}
X \xrightarrow{f} Y \xrightarrow{g} Z
\end{equation}
of topological spaces, and let $h = g \circ f$.

\begin{lem}\label{lem:comp-for}
In the setting of~\eqref{eqn:comp-setting}, the following diagrams commute:
\[
\xymatrix@R=0.6cm{
\For^Z_{\bk,\bk'} g_! f_! \ar[r]^{\eqref{eqn:for_!}} \ar[dd]_{\Co} &
  g_! \For^Y_{\bk,\bk'} f_! \ar[d]^{\eqref{eqn:for_!}} \\
&  g_! f_! \For^X_{\bk,\bk'} \ar[d]^{\Co} \\
\For^Z_{\bk,\bk'} h_! \ar[r]_{\eqref{eqn:for_!}} &
  h_! \For^X_{\bk,\bk'}}
\qquad
\xymatrix@R=0.6cm{
\For^X_{\bk,\bk'} f^* g^* \ar[r]^{\eqref{eqn:for^*}} \ar[dd]_{\Co} &
  f^* \For^Y_{\bk,\bk'} g^* \ar[d]^{\eqref{eqn:for^*}} \\
  & f^* g^* \For^Z_{\bk,\bk'} \ar[d]^{\Co} \\
\For^X_{\bk,\bk'} h^* \ar[r]_{\eqref{eqn:for^*}} &
  h^* \For^Z_{\bk,\bk'}}
\]
\end{lem}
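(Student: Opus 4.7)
The plan is to reduce both diagrams to their underived (sheaf-level) analogues, where commutativity becomes essentially tautological, and then to lift using the construction of the derived isomorphisms via $c$-soft resolutions.

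Consider first the right-hand diagram. The functors $f^*$, $g^*$ and $h^*$ are exact at the sheaf level, so no derivation is needed: the derived isomorphism $\eqref{eqn:for^*}$ is induced by the tautological underived identity $\For^X_{\bk,\bk'} \circ \uo f^* = \uo f^* \circ \For^Y_{\bk,\bk'}$ (both are the same set-theoretic operation, just with a change in the module structure), and similarly $\Co$ comes from the strict equality $\uo f^* \circ \uo g^* = \uo h^*$. Once unwound, the commutativity of the diagram follows by a direct check on sheaves.

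Next, consider the left-hand diagram. At the sheaf level the identity $\For^Y_{\bk,\bk'} \circ \uo f_! = \uo f_! \circ \For^X_{\bk,\bk'}$ is obvious (proper-supports pushforward commutes with restriction of scalars), and similarly for $g$ and $h$; moreover these identities are strictly compatible with $\uo g_! \circ \uo f_! = \uo h_!$. The derived isomorphism $\eqref{eqn:for_!}$ was defined in \S\ref{subsect:changerings} by identifying both sides as right derived functors of $\For \circ \uo f_!$, using that $\For$ sends $c$-soft sheaves to $c$-soft sheaves and that $c$-soft sheaves are acyclic for $f_!$. The point is that since $f_!$, $g_!$ and $h_!$ all preserve $c$-softness (and are acyclic on $c$-soft sheaves), and $\For$ preserves $c$-softness, one can compute every composition in the left-hand diagram by resolving an arbitrary object of $\Db(X,\bk')$ by a complex of $c$-soft sheaves and then applying the underived functors termwise. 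Under such a resolution both paths around the diagram reduce, term by term, to the underived identity noted above; thus the diagram commutes.

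The main obstacle, which is really bookkeeping rather than a substantive difficulty, is making the passage from underived to derived precise. The cleanest way is to invoke \cite[Proposition~1.8.7]{kas} (as was done in the construction of $\eqref{eqn:for_!}$): for each of the three compositions $\For \circ \uo g_! \circ \uo f_!$, $\uo g_! \circ \For \circ \uo f_!$, and $\uo g_! \circ \uo f_! \circ \For$, the class of $c$-soft $\bk$- or $\bk'$-sheaves is acyclic and preserved by all functors involved. This identifies the three derived functors in question with the right derived functor of a common underived composite, and the equality of the underived composition-of-composition isomorphisms makes the derived diagram commute.
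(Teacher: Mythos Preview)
Your proposal is correct and follows essentially the same approach as the paper: the paper's proof is simply the one-line remark that by general principles it suffices to check the underived analogues, which are straightforward. You have spelled out those general principles (acyclicity and preservation of $c$-softness, invoking \cite[Proposition~1.8.7]{kas}) in more detail than the paper does, but the strategy is identical.
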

\begin{proof}
By general principles, it suffices to check the corresponding statements for nonderived functors of abelian categories, and those are straightforward.
\end{proof}

\subsection{Base change and change of rings}

Consider a cartesian square of topological spaces:
\begin{equation}\label{eqn:bc-setting}
\vcenter{\xymatrix@R=0.6cm{
W \ar[r]^{g'} \ar[d]_{f'} \ar@{}[dr]|{\square} & X \ar[d]^{f} \\
Y \ar[r]_g & Z}}
\end{equation}

\begin{lem}\label{lem:bc-for}
In the setting of~\eqref{eqn:bc-setting}, the following diagram commutes:
\[
\xymatrix@R=0.6cm{
\For^Y_{\bk,\bk'} g^* f_! \ar[r]^{\eqref{eqn:for^*}} \ar[d]_{\BC} &
  g^* \For^Z_{\bk,\bk'} f_! \ar[r]^{\eqref{eqn:for_!}} &
  g^* f_! \For^X_{\bk,\bk'} \ar[d]^{\BC} \\
\For^Y_{\bk,\bk'} f'_! g'{}^* \ar[r]^{\eqref{eqn:for_!}} &
  f'_! \For^W_{\bk,\bk'} g'{}^* \ar[r]^{\eqref{eqn:for^*}} &
  f'_! g'{}^* \For^X_{\bk,\bk'} }
\]
\end{lem}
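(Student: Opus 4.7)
The plan is to reduce the statement to the corresponding commutativity at the level of nonderived functors between abelian categories of sheaves, exactly as was done in the proof of Lemma~\ref{lem:comp-for}.

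Recall how each of the four maps in the diagram is constructed. The isomorphism \eqref{eqn:for^*} is induced termwise from the equality $\uo{f}^* \circ \uo{\For}^Z_{\bk,\bk'} = \uo{\For}^X_{\bk,\bk'} \circ \uo{f}^*$ of exact nonderived functors (and similarly for $g,g'$). The isomorphism \eqref{eqn:for_!} was constructed in \S\ref{subsect:changerings} by identifying both sides with the right derived functor of $\uo{f}_! \circ \uo{\For}^X_{\bk,\bk'}$, using that $\For$ sends $c$-soft $\bk'$-sheaves to $c$-soft $\bk$-sheaves. The derived base-change isomorphism $\BC$ is likewise obtained from the nonderived morphism $\uo{g}^* \circ \uo{f}_! \to \uo{f'}_! \circ \uo{g'}^*$ by applying it termwise to a $c$-soft resolution of the input. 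Evaluating the hexagon on a $c$-soft $\bk'$-sheaf $\mathcal{F}$ on $X$, every object appearing in it is then computed by the corresponding nonderived functor applied to $\mathcal{F}$, since $\For$, $g^*$ and $g'^*$ all preserve the $c$-softness needed to derive $f_!$ and $f'_!$. Thus the problem reduces to checking the commutativity of the nonderived analogue of the diagram.

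At the nonderived level, the base-change morphism is built from the $(\uo{g}^*, \uo{g}_*)$-adjunction unit together with the canonical identification $\uo{f}_! \circ \uo{g'}_* \simto \uo{g}_* \circ \uo{f'}_!$; the functor $\uo{\For}$, being exact and commuting with all limits and colimits of modules, intertwines each of these ingredients tautologically, so the commutativity is immediate. The main obstacle is purely bookkeeping: one must verify that each occurrence of $\BC$ and of \eqref{eqn:for_!} in the diagram is induced termwise by its nonderived counterpart on a common $c$-soft resolution, so that the reduction makes sense. Given that the excerpt constructs \eqref{eqn:for_!} in precisely this fashion (invoking \cite[Proposition~1.8.7]{kas}), and that the analogous construction of $\BC$ via resolutions is standard, these identifications are in hand, and the remaining verification parallels the proof of Lemma~\ref{lem:comp-for}.
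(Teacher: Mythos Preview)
Your approach is essentially the same as the paper's: both reduce to the nonderived statement between abelian categories of sheaves and check it there, with the paper citing \cite[Proposition~2.5.11]{kas} for the explicit nonderived base-change map rather than describing it via adjunction as you do. One small caution: your assertion that $g'^*$ preserves $c$-softness is not literally true in general; what one actually needs (and what the reference supplies) is that the pullback of a $c$-soft sheaf along $g'$ is $f'_!$-acyclic, which suffices for the reduction.
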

\begin{proof}
As above, it suffices to prove the analogous nonderived statement.  The reader is referred to~\cite[Proposition~2.5.11]{kas} for an explicit description of the nonderived base-change isomorphism.  It is easily seen that the construction there 
commutes with forgetful functors in the desired way.
\end{proof}

\subsection{Constant sheaf and change of rings}

Let $f : X \to Y$ be a continuous map of topological spaces. The following lemma is obvious.

\begin{lem}\label{lem:const-for}
The following diagram commutes:
\[
\xymatrix@R=0.6cm{
\For^X_{\bk,\bk'} f^* \ubb_Y \ar[r]^{\eqref{eqn:for^*}} \ar[d]_{\Const} &
  f^* \For^Y_{\bk,\bk'} \ubb_Y \ar[r]^{\eqref{eqn:for-constant}} &
  f^* \ubb_Y \Formod_{\bk,\bk'} \ar[d]^{\Const} \\
\For^X_{\bk,\bk'} \ubb_X \ar[rr]^-{\eqref{eqn:for-constant}} &
  & \ubb_X \Formod_{\bk,\bk'} }
\]
\end{lem}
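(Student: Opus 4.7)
The plan is to reduce the commutativity of the diagram to a tautological check on stalks, exploiting that all functors in sight are exact and that each of the three natural isomorphisms involved---namely $\Const$, \eqref{eqn:for^*}, and \eqref{eqn:for-constant}---is essentially the identity on the underlying abelian sheaves.

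First, I would observe that the constant sheaf functor $\ubb$, the pullback $f^*$, and the forgetful functor $\For$ are all exact, so the derived functors appearing in the diagram agree with their nonderived counterparts. Hence it suffices to verify the commutativity in the abelian category $\Sh(X,\bk)$, where both sides are genuine sheaves. Since a morphism of sheaves is determined by its effect on stalks, it is enough to check commutativity after passing to an arbitrary stalk.

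Next, I would unpack each isomorphism at the stalk over a point $x \in X$, setting $y = f(x)$. For $M \in \bk'\allmod$, the stalk $(\ubb_Y M)_y$ is canonically identified with the underlying abelian group of $M$, and likewise for $\ubb_X$. Under these canonical identifications: the $\Const$ isomorphism $f^* \ubb_Y \simto \ubb_X$ is the identity on $M$; the isomorphism \eqref{eqn:for-constant} is likewise the identity, since a constant sheaf of $\bk'$-modules viewed as a sheaf of $\bk$-modules is precisely the constant sheaf on the underlying $\bk$-module; and \eqref{eqn:for^*}, being constructed from the sheaf-level operation of $f^*$ that manifestly commutes with the scalar-forgetting operation, is also the identity on stalks.

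Once each of the three isomorphisms has been identified with the identity at stalk level, the diagram at each stalk becomes a trivial square of identity maps on the underlying abelian group of $M$, and commutativity is immediate. The only ``obstacle''---really a matter of bookkeeping---is to unwind the constructions of \eqref{eqn:for^*} and \eqref{eqn:for-constant} from \S\ref{subsect:changerings} carefully enough to justify these stalk computations. This is presumably the reason the authors deem the lemma obvious and omit the proof.
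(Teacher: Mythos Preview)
Your proposal is correct and is precisely the sort of routine verification the paper has in mind; the authors state that the lemma is ``obvious'' and give no proof, and your reduction to the nonderived setting followed by a stalkwise check is the natural way to substantiate that claim.
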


\subsection{Fourier transform and change of rings}

Consider the context of \eqref{eqn:fourier_!}. This isomorphism is deduced from base-change and composition isomorphisms using the following diagram 
where $Q'$ is to $E'$ what $Q$ is to $E$:
\[
\xymatrix@R=0.6cm{
E' \ar[d]_-{f} \ar@{}[dr]|{\square} & Q' \ar[r]^-{\check q'} \ar[l]_-{q'} \ar[d] \ar@{}[dr]|{\square} & (E')^* \ar[d]^-{f'} \\
E & Q \ar[r]^-{\check q} \ar[l]_-{q} & E^*
}
\]

\begin{lem}\label{lem:fourier-for_!}
In the context of \eqref{eqn:fourier_!}, the following diagram commutes:
\[
\xymatrix@R=0.6cm{
\For^{E^*}_{\bk,\bk'} \bT_E f_! \ar[r]^{\eqref{eqn:for-fourier}} \ar[d]_{\eqref{eqn:fourier_!}}
  & \bT_E \For^E_{\bk,\bk'} f_! \ar[r]^{\eqref{eqn:for_!}}
  & \bT_E f_! \For^E_{\bk,\bk'} \ar[d]^{\eqref{eqn:fourier_!}} \\
\For^{E^*}_{\bk,\bk'} f'_! \bT_{E'} \ar[r]^{\eqref{eqn:for_!}}
  & f'_! \For^{E^*}_{\bk,\bk'} \bT_{E'} \ar[r]^{\eqref{eqn:for-fourier}}
  & f'_! \bT_{E'} \For^E_{\bk,\bk'}}
\]
\end{lem}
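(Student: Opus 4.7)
The plan is to unfold both vertical maps in the square and reduce the commutativity to Lemmas~\ref{lem:comp-for} and~\ref{lem:bc-for}. Let $\tilde f : Q' \to Q$ be the morphism making both squares in
\[
\xymatrix@R=0.6cm{
E' \ar[d]_-{f} \ar@{}[dr]|{\square} & Q' \ar[l]_-{q'} \ar[r]^-{\check q'} \ar[d]^-{\tilde f} \ar@{}[dr]|{\square} & (E')^* \ar[d]^-{f'} \\
E & Q \ar[l]_-{q} \ar[r]^-{\check q} & E^*
}
\]
cartesian. By its construction, \eqref{eqn:fourier_!} is the composition
\[
\bT_E f_! \;=\; \check q_! q^* f_! [\rk E] \xrightarrow{\BC} \check q_! \tilde f_! (q')^* [\rk E] \xrightarrow{\Co} f'_! \check{q'}_! (q')^* [\rk E] \;=\; f'_! \bT_{E'},
\]
while \eqref{eqn:for-fourier} is built by applying \eqref{eqn:for_!} to $\check q_!$ and \eqref{eqn:for^*} to $q^*$ (the shift $[\rk E]$ plays no role).

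First I would substitute these presentations into the square of the lemma, thereby replacing each instance of $\bT_E$ (respectively $\bT_{E'}$) by $\check q_! q^*$ (respectively $\check{q'}_! (q')^*$) and each instance of \eqref{eqn:fourier_!} by its two-step construction above. This yields a larger rectangle whose edges involve only \eqref{eqn:for_!}, \eqref{eqn:for^*}, $\BC$ and $\Co$. Next I would triangulate this rectangle by inserting intermediate vertices so that each subregion is of exactly one of the following three types: a naturality square for \eqref{eqn:for_!} or \eqref{eqn:for^*} with respect to a fixed morphism; a square in which the composition isomorphism $\check q_! \tilde f_! \simto f'_! \check{q'}_!$ is transported across $\For$, which commutes by Lemma~\ref{lem:comp-for}; or a square in which the base-change isomorphism $q^* f_! \simto \tilde f_! (q')^*$ is transported across $\For$, which commutes by Lemma~\ref{lem:bc-for}.

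The main obstacle is purely combinatorial: one must choose the intermediate vertices in the right order so that the rectangle decomposes into subregions of exactly these three types with no leftover pieces. Once this decomposition is in hand, each piece commutes either by naturality or by direct citation of one of the two cited lemmas, and the proof is complete. No new geometric input is required beyond the compatibilities already established in Appendix~\ref{sect:isomsheaf}, so the verification is essentially formal.
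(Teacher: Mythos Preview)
Your proposal is correct and is exactly the approach taken in the paper: the paper's proof of this lemma (given jointly with that of Lemma~\ref{lem:fourier-for^*}) is the single sentence ``The claims follow from Lemmas~\ref{lem:comp-for} and~\ref{lem:bc-for},'' and your outline simply spells out how that reduction goes after unfolding $\bT_E = \check q_! q^*[\rk E]$ and \eqref{eqn:fourier_!} into its $\BC$ and $\Co$ constituents.
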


Now, consider the context of \eqref{eqn:fourier^*}. This isomorphism is deduced from base-change and composition isomorphisms using the following diagram, where both squares are cartesian and both triangles are commutative:
\[
\xymatrix@R=0.6cm{
E_1^* \ar@{}[dr]|{\square} & E_2^* \ar[l]_-{\ut \phi} & \\
Q_1 \ar[rd]_-{q_1} \ar[u]^-{\check q_1} & Q_{12} \ar@{}[dr]|{\square} \ar[l] \ar[r] \ar[u] \ar[d] & Q_2 \ar[lu]_-{\check q_2} \ar[d]^-{q_2} \\
& E_1 \ar[r]^-{\phi} & E_2
}
\]

\begin{lem}\label{lem:fourier-for^*}
In the context of \eqref{eqn:fourier^*}, the following diagram commutes:
\[
\xymatrix@R=0.6cm{
\For^{E_2^*}_{\bk,\bk'} \ut \phi^* \bT_{E_1} [-\rk E_1] \ar[r]^{\eqref{eqn:for^*}} \ar[d]_{\eqref{eqn:fourier^*}} &
  \ut \phi^* \For^{E_1^*}_{\bk,\bk'} \bT_{E_1} [-\rk E_1] \ar[r]^{\eqref{eqn:for-fourier}} &
  \ut \phi^* \bT_{E_1} \For^{E_1}_{\bk,\bk'} [-\rk E_1] \ar[d]^{\eqref{eqn:fourier^*}} \\
\For^{E_2^*}_{\bk,\bk} \bT_{E_2} \phi_! [-\rk E_2] \ar[r]^{\eqref{eqn:for-fourier}} &
  \bT_{E_2} \For^{E_2}_{\bk,\bk'} \phi_! [-\rk E_2] \ar[r]^{\eqref{eqn:for_!}} &
  \bT_{E_2} \phi_! \For^{E_1}_{\bk,\bk'} [-\rk E_2]}
\]
\end{lem}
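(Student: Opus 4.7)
The plan is to reduce the commutativity of the diagram to repeated applications of Lemmas~\ref{lem:comp-for} and~\ref{lem:bc-for}, exploiting the explicit construction of \eqref{eqn:fourier^*} given in the paragraph preceding the statement. The key observation is that \eqref{eqn:fourier^*} is itself nothing but a composite of $\Co$ and $\BC$ isomorphisms for the morphisms appearing in the diagram with $Q_1, Q_2, Q_{12}$ at the top, and similarly \eqref{eqn:for-fourier} is by construction a composite of the instances of \eqref{eqn:for_!} and \eqref{eqn:for^*} coming from the definition \eqref{eqn:fourier-defn} of $\bT$.

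First I would rewrite both sides of the diagram by expanding $\bT_{E_1}$, $\bT_{E_2}$ as $\check q_{i,!} \, q_i^*[\rk E_i]$, and similarly rewriting \eqref{eqn:fourier^*} as the composite of the appropriate $\Co$ isomorphisms (for the commutative triangles $\check q_1 = \ut\phi \circ (\text{top arrow})$ and $q_2 = \phi \circ (\text{bottom arrow})$ in the $Q_{12}$-diagram) and the $\BC$ isomorphism for the central cartesian square of that diagram. After this expansion, each arrow \eqref{eqn:fourier^*} in the outer diagram of the lemma unfolds into a chain of $\Co$'s and one $\BC$, and each arrow \eqref{eqn:for-fourier} unfolds into one instance of \eqref{eqn:for_!} followed by one instance of \eqref{eqn:for^*} (applied to $\check q_{i,!}$ and $q_i^*$ respectively). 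The shifts $[-\rk E_1]$, $[-\rk E_2]$ cancel cleanly and can be ignored throughout.

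At this point the diagram of the lemma breaks up into a tiling of small rectangles, each of which is either of the form treated by Lemma~\ref{lem:comp-for} (a rectangle mixing $\Co$ with forgetful-compatibilities \eqref{eqn:for_!} or \eqref{eqn:for^*}) or of the form treated by Lemma~\ref{lem:bc-for} (a rectangle mixing $\BC$ with those same forgetful-compatibilities). I would simply label each sub-rectangle with the appropriate lemma and conclude commutativity of the whole. Finally one observes that the definition of \eqref{eqn:for-fourier} is exactly arranged so that the composite of the two forgetful-compatibility steps in the top-right and bottom-left of each rewritten arrow agrees with \eqref{eqn:for-fourier} itself; there is no mismatch in the order of \eqref{eqn:for_!} and \eqref{eqn:for^*}.

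The main obstacle is entirely bookkeeping: the diagram has many objects once fully expanded, and one must check that the $\BC$ step in the expansion of \eqref{eqn:fourier^*} lines up correctly with the $\BC$-forgetful compatibility lemma (Lemma~\ref{lem:bc-for}), rather than requiring some further compatibility between $\BC$ and $\Co$ applied to forgetful functors. This is where a careful diagram labelling is needed, but no new geometric input is required beyond what is already provided by Lemmas~\ref{lem:comp-for} and~\ref{lem:bc-for}; as in the earlier lemmas of this appendix, the statement really reduces to the nonderived analogue, which is straightforward.
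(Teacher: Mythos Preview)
Your proposal is correct and matches the paper's own proof, which simply states that the claim follows from Lemmas~\ref{lem:comp-for} and~\ref{lem:bc-for}. Your expansion of \eqref{eqn:fourier^*} via the $Q_{12}$-diagram and of \eqref{eqn:for-fourier} via \eqref{eqn:for_!} and \eqref{eqn:for^*}, followed by tiling into sub-rectangles handled by those two lemmas, is exactly the intended argument spelled out in detail.
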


\begin{proof}[Proofs of Lemmas~\ref{lem:fourier-for_!} and~\ref{lem:fourier-for^*}]
The claims follow from Lemmas~\ref{lem:comp-for} and~\ref{lem:bc-for}.
\end{proof}

\subsection{Fourier transform of the constant sheaf}
\label{ss:fourier-constant}

Consider the context of \eqref{eqn:fourier-con}.
By definition, we have $\bT_E(\ub M_E) = \check q_! q^*\ub M_E[\rk E] = \check q_! \ub M_Q[\rk E]$.   Let $U = \check q^{-1}(E^* \smallsetminus Y)$, and let $h: U \to Q$ and $i: Q \smallsetminus U \to Q$ be the inclusion maps.  It is easy to see that the complement $Q \smallsetminus U$ can be identified with $E$, and $\check q \circ i$ can be identified with $y \circ p: E \to Y \to E^*$.  Form the natural distinguished triangle
\[
h_! \ub M_U \to \ub M_Q \to i_! \ub M_E \to.
\]
It is easily checked that 
$\check q_!h_!(\ub M_U) = 0$.  Thus, the adjunction morphism
\begin{equation}\label{eqn:fourier-con-adj}
\ub M_Q \to i_! i^* \ub M_Q = i_! \ub M_E
\end{equation}
becomes an isomorphism after applying the functor $\check q_!$.  Next, we have a pair of composition isomorphisms
\begin{equation}\label{eqn:fourier-con-comp}
\check q_! i_! \ub M_E \simto (\check q \circ i)_! \ub M_E \simto y_!p_! \ub M_E.
\end{equation}
Finally, as explained in Lemma \ref{lem:cohc} below there is a canonical isomorphism
\begin{equation}\label{eqn:fourier-con-vec}
p_!(\ub M_E) \cong \ub M_Y[-2\rk E].
\end{equation}
The isomorphism~\eqref{eqn:fourier-con} is obtained by composing~\eqref{eqn:fourier-con-adj},~\eqref{eqn:fourier-con-comp}, and~\eqref{eqn:fourier-con-vec}.

\begin{lem}\label{lem:fourier-constant}
The following diagram commutes:
\[
\xymatrix@R=0.6cm{
\For^{E^*}_{\bk,\bk'} \bT_E\ubb_E \ar[r]^{\eqref{eqn:for-fourier}} \ar[d]_{\eqref{eqn:fourier-con}} &
  \bT_E \For^E_{\bk,\bk'} \ubb_E \ar[r]^{\eqref{eqn:for-constant}} &
  \bT_E \ubb_E \Formod_{\bk,\bk'} \ar[d]^{\eqref{eqn:fourier-con}} \\
\For^{E^*}_{\bk,\bk'} y_!\ubb_Y [-\rk E] \ar[r]^{\eqref{eqn:for_!}} &
  y_! \For^Y_{\bk,\bk'} \ubb_Y [-\rk E] \ar[r]^{\eqref{eqn:for-constant}} &
  y_! \ubb_Y \Formod_{\bk,\bk'} [-\rk E]
}
\]
\end{lem}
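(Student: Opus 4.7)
The strategy is to decompose the diagram according to the three-step construction of~\eqref{eqn:fourier-con} recalled in \S\ref{ss:fourier-constant}, and to prove the commutativity of each resulting subdiagram separately. Concretely, the isomorphism~\eqref{eqn:fourier-con} is obtained, after using the $\Const$ isomorphism $q^* \ub M_E \simto \ub M_Q$ and the shift $[\rk E]$ implicit in the definition of $\bT_E$, as the composition of
(i) $\check q_!$ applied to the adjunction isomorphism $\ub M_Q \simto i_! i^* \ub M_Q$ of~\eqref{eqn:fourier-con-adj} (combined with another $\Const$ isomorphism $i^* \ub M_Q \simto \ub M_E$);
(ii) the composition isomorphism~\eqref{eqn:fourier-con-comp} $\check q_! i_! \simto (\check q \circ i)_! = (y\circ p)_! \simto y_! p_!$; and
(iii) $y_!$ applied to~\eqref{eqn:fourier-con-vec}. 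Accordingly, I would cut the rectangle in the statement into four horizontal slices, one for each of the shift/$\Const$ combination, step (i), step (ii), and step (iii), and show that each slice commutes.

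The middle slice (corresponding to (ii)) is an instance of Lemma~\ref{lem:comp-for} applied to the composition $y\circ p = \check q\circ i$, combined with Lemma~\ref{lem:const-for} to handle the interaction with $\ubb$. The slice corresponding to (iii) reduces to the compatibility of~\eqref{eqn:cohc} with change of rings, i.e.~the commutativity of the square obtained by replacing $\bT_E$ by $p_!$, $y_!$ by the shift $[-2\rk E]$, and~\eqref{eqn:fourier-con} by~\eqref{eqn:cohc}; I would prove this auxiliary compatibility as a separate short lemma, following the proof of Lemma~\ref{lem:cohc} step by step and using Lemmas~\ref{lem:comp-for},~\ref{lem:bc-for}, and~\ref{lem:const-for} at each stage (this is essentially bookkeeping since~\eqref{eqn:cohc} is itself built out of $\Co$, $\BC$, and $\Const$ isomorphisms).

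The slice corresponding to (i) requires the observation that, for any continuous map $i$ and any sheaf, the adjunction unit $F\to i_!i^*F$ is natural in $F$, so that applying $\For$ to it and then transporting via~\eqref{eqn:for_!} and~\eqref{eqn:for^*} gives the same result as applying the adjunction unit after $\For$; this is a formal consequence of the fact that $\For$ is compatible with the $(i^*,i_!)$-adjunction (which itself follows from the nonderived statement, as in the proof of Lemma~\ref{lem:bc-for}). One then uses Lemma~\ref{lem:const-for} to absorb the $\Const$ isomorphism $i^* \ub M_Q\simto \ub M_E$ into the picture. The remaining slice, involving only $\Const$ isomorphisms and the shift, is immediate from Lemma~\ref{lem:const-for}.

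The main obstacle is not any single conceptual difficulty but the bookkeeping: one must keep track of half a dozen natural transformations (two forgetful compatibilities, one adjunction, two compositions, one $\Const$) and verify that the several pasted rectangles really do assemble into the original diagram. Once the decomposition is set up, every subdiagram either falls under one of Lemmas~\ref{lem:comp-for},~\ref{lem:bc-for},~\ref{lem:const-for} or expresses the naturality of an adjunction with respect to an exact functor of abelian categories, and so reduces, by the general principle used throughout the appendix, to its obvious nonderived analogue.
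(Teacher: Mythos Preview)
Your proposal is correct and follows essentially the same approach as the paper: both decompose~\eqref{eqn:fourier-con} into its three constituent isomorphisms~\eqref{eqn:fourier-con-adj},~\eqref{eqn:fourier-con-comp},~\eqref{eqn:fourier-con-vec} and verify compatibility with $\For_{\bk,\bk'}$ for each piece separately. The paper handles~\eqref{eqn:fourier-con-comp} by Lemma~\ref{lem:comp-for}, handles~\eqref{eqn:fourier-con-adj} by writing down the relevant square for the adjunction $\id\to i_!i^*$ and checking it at the level of abelian categories (exactly your naturality-of-adjunction argument), and defers the compatibility of~\eqref{eqn:fourier-con-vec} to Lemma~\ref{lem:cohc}, which already contains the change-of-rings diagram you propose to prove as a separate lemma.
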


\begin{proof}

To show that~\eqref{eqn:fourier-con} commutes with $\For_{\bk,\bk'}$, we must show the same statement for each of~\eqref{eqn:fourier-con-adj},~\eqref{eqn:fourier-con-comp}, and~\eqref{eqn:fourier-con-vec}.  For~\eqref{eqn:fourier-con-comp}, this is Lemma~\ref{lem:comp-for}.  For~\eqref{eqn:fourier-con-adj}, this follows from the commutativity of the diagram
\[
\xymatrix@R=0.6cm{
\For^Q_{\bk,\bk'} \ar[r]\ar[d]
  & \For^Q_{\bk,\bk'} i_!i^* \ar[d]^{\eqref{eqn:for_!}}\\
i_! i^* \For^Q_{\bk,\bk'} \ar[r]_{\eqref{eqn:for^*}}
  & i_! \For^E_{\bk,\bk'} i^* 
}
\]
which can be checked at the level of abelian categories, as in Lemmas~\ref{lem:comp-for} and~\ref{lem:bc-for}. Finally, for~\eqref{eqn:fourier-con-vec} this is proved in Lemma \ref{lem:cohc} below.
\end{proof}

\subsection{Cohomology with compact supports and change of rings}

Consider the setting of \S\ref{ss:fourier-constant}. 

\begin{lem}
\label{lem:cohc}
There exists an isomorphism of functors
$p_! \ubb_E \cong \ubb_Y[-2\rk E]$,
see \eqref{eqn:cohc}.
Moreover, the following diagram commutes:
\[
\xymatrix@R=0.6cm{
\For^Y_{\bk,\bk'} p_! \ubb_E \ar[r]^{\eqref{eqn:for_!}} \ar[d]_{\eqref{eqn:cohc}}
  & p_! \For^E_{\bk,\bk'} \ubb_E \ar[r]^{\eqref{eqn:for-constant}}
  & p_! \ubb_E \Formod_{\bk,\bk'} \ar[d]^{\eqref{eqn:cohc}} \\
\For^Y_{\bk,\bk'} \ubb_Y [-2\rk E] \ar[rr]^{\eqref{eqn:for-constant}}
  && \ubb_Y \Formod_{\bk,\bk'} [-2\rk E] }
\]
\end{lem}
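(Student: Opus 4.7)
My plan is to first construct the isomorphism \eqref{eqn:cohc} by a combination of local trivializations and an orientation argument, then verify the isomorphism statement fiberwise via base change, and finally verify the change-of-rings compatibility by unwinding the construction.

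For the construction, I would argue as follows. Since $p:E\to Y$ is a complex vector bundle, it is a smooth submersion of real relative dimension $2\rk E$, and the complex structure on the fibers provides a canonical orientation. This yields a canonical trivialization $p^! \ubb_Y \simto \ubb_E[2\rk E]$ (the relative orientation class). Composing with the counit $p_!p^! \to \id$ gives a morphism $p_!\ubb_E[2\rk E]\to \ubb_Y$, which shifts to the desired map $p_!\ubb_E \to \ubb_Y[-2\rk E]$. To confirm it is an isomorphism it suffices to check on stalks: for $y\in Y$, proper base change along the inclusion $\{y\}\hookrightarrow Y$ identifies the stalk with $H^\bullet_c(E_y;\bk)$ where $E_y\cong\mathbb{C}^{\rk E}$, and the classical computation gives $H^\bullet_c(\mathbb{C}^{\rk E};\bk)\cong \bk[-2\rk E]$.

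As an alternative that sidesteps any appeal to Verdier duality, I could instead work over a trivializing open cover $\{U_i\}$ of $Y$. On each $U_i$, Künneth and the computation $H^\bullet_c(\mathbb{C}^n;\bk)\cong\bk[-2n]$ provide a local isomorphism, and the transition data lies in $GL_n(\mathbb{C})$; since this group is connected, its induced action on $H^{2n}_c(\mathbb{C}^n;\bk)$ is trivial, so the local isomorphisms glue. This approach is manifestly natural in $\bk$.

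For the change-of-rings compatibility square, I would argue that each ingredient in the construction commutes with $\For_{\bk,\bk'}$: the counit of the $(p_!,p^!)$-adjunction is a standard natural transformation which, like the base-change and composition isomorphisms already treated in this appendix, respects $\For_{\bk,\bk'}$; and the orientation trivialization $p^!\ubb_Y\simto \ubb_E[2\rk E]$ is topological in nature, so the analogous isomorphism for $\bk'$ is obtained by applying $\Formod_{\bk,\bk'}$. Chasing the resulting diagram (using Lemma~\ref{lem:const-for} for the compatibility of \eqref{eqn:for^*} with $\Const$, as appropriate) then gives the square in the statement. Equivalently, in the trivializing-cover approach, both paths around the square are built out of the Künneth formula and the constant-sheaf identifications, which visibly commute with $\For_{\bk,\bk'}$.

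The main obstacle is the compatibility of the orientation trivialization $p^!\ubb_Y\simto \ubb_E[2\rk E]$ with change of rings: Verdier duality and the $(p_!,p^!)$-adjunction on non-constructible derived categories can be delicate to handle in the functorial framework of this paper. Because of this I would lean toward the second (local-trivialization plus Künneth) construction, where naturality in $\bk$ is automatic, and use the fiberwise argument only to certify that the globally-defined morphism is an isomorphism.
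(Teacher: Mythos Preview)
Your second (local-trivialization) approach is essentially the one the paper takes: the paper first handles the trivial bundle $E=\C^{\rk E}\times Y$ via the computation in \cite[Proposition~3.2.3(iii)]{kas}, then shows that any bundle automorphism acts trivially on $p_!\ubb_E$ (via \cite[Proposition~3.2.3(iv)]{kas}, which is exactly your connectedness-of-$GL_n(\C)$ argument), concludes that the isomorphism is independent of the choice of trivialization, and finally glues over a trivializing cover; the change-of-rings square is then obvious locally. Your instinct to avoid the $p^!$/Verdier-duality route is sound in this setting, and the paper does not use it either.
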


\begin{proof}
First, assume that the vector bundle is trivial: $E=\C^{\rk E} \times Y$. Then the arguments in the proof of  \cite[Proposition 3.2.3(iii)]{kas} can be easily adapted to produce our isomorphism \eqref{eqn:cohc}, and the commutativity of the diagram is obvious. If now $\phi$ is an automorphism of $E$ (as a complex vector bundle over $Y$) then $\phi$ induces an automorphism of the functor $p_! \ubb_E$. However this automorphism is trivial by the arguments of the proof of  \cite[Proposition 3.2.3(iv)]{kas}.

Now assume that $E$ can be trivialized. Then by the claim about automorphisms above, there exists a canonical isomorphism \eqref{eqn:cohc} (i.e.~any choice of a trivialization produces such an isomorphism, and it does not depend on the trivialization). The commutativity of the diagram also follows from the case $E$ is trivial.

Finally, consider the general case. Let $M$ be in $\bk\allmod$. Then the object $p_! \ub M_E[2 \rk E]$ of $\Db(Y,\bk)$ restricts to a sheaf on any open subset $Y'$ of $Y$ over which $E$ is trivializable. It follows that this object itself is a sheaf. Moreover, the restriction of this sheaf to $Y'$ is canonically isomorphic to $\ub M_{Y'}$ for any $Y'$ as above. The canonicity implies that these isomorphisms can be glued to an isomorphism $p_! \ub M_E[2 \rk E] \cong \ub M_Y$, which is clearly functorial in $M$. Finally, the commutativity of the diagram follows from the commutativity on any open subset on which $E$ is trivializable, which was checked above.
\end{proof}

\subsection{Compatibilities involving Fourier transform}

\begin{lem}
\label{lem:fourier-con-zero-section}
In the context of~\eqref{eqn:fourier-con}, the following diagram commutes, where the bottom horizontal morphism is the obvious one:
\[
\xymatrix@C=1.5cm@R=0.6cm{
y^* \bT_E \ubb_E \ar[r]^-{\eqref{eqn:zero-section}} \ar[d]_-{\eqref{eqn:fourier-con}} &  p_! \ubb_E[\rk E] \ar[d]^-{\eqref{eqn:cohc}} \\
y^* y_! \ubb_Y [-\rk E]  \ar[r] & \ubb_Y[-\rk E]
}
\]
\end{lem}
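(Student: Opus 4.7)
My plan is to reduce the commutativity of the diagram to a standard compatibility between base change and the unit/counit of adjunction for closed immersions, by carefully unwinding both \eqref{eqn:fourier-con} and \eqref{eqn:zero-section}.

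First, I would recall from \S\ref{ss:fourier-constant} that \eqref{eqn:fourier-con} is the composition of three steps: the adjunction unit $\ubb_Q \to i_!\ubb_E$, the composition isomorphism $\check q_! i_! \simto (\check q i)_! = (yp)_! \simto y_! p_!$ (using $\check q \circ i = y \circ p$), and finally \eqref{eqn:cohc}. Since $y$ is a closed immersion, the counit $y^* y_! \to \mathrm{id}$ is an isomorphism, and naturality ensures that the square
\[
\xymatrix@R=0.6cm{
y^* y_! p_! \ubb_E [\rk E] \ar[r]^-{\eqref{eqn:cohc}} \ar[d] & y^* y_! \ubb_Y [-\rk E] \ar[d] \\
p_! \ubb_E [\rk E] \ar[r]^-{\eqref{eqn:cohc}} & \ubb_Y [-\rk E]
}
\]
commutes. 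Hence the overall diagram commutes if and only if the composition $y^* \bT_E \ubb_E \to y^* y_! p_! \ubb_E [\rk E] \to p_! \ubb_E [\rk E]$, built from $y^*$ applied to the first two steps of \eqref{eqn:fourier-con} followed by the counit, agrees with \eqref{eqn:zero-section} on $\ubb_E$.

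Next, I would unwind \eqref{eqn:zero-section} as the special case of \eqref{eqn:fourier^*} with $E_2 = Y$ the trivial rank-$0$ bundle, $\phi = p$, and $\ut \phi = y$. In that case, the auxiliary space $Q_{12}$ in the Kashiwara--Schapira construction is naturally identified with $E$, the maps $Q_{12} \to Q$ and $Q_{12} \to Y$ become $i$ and $p$ respectively, and $q \circ i = \mathrm{id}_E$. Under these identifications, \eqref{eqn:zero-section} applied to $\ubb_E$ collapses to
\[
y^* \bT_E \ubb_E = y^* \check q_! q^* \ubb_E [\rk E] \xrightarrow{\BC} p_! i^* q^* \ubb_E [\rk E] \xrightarrow{\Co,\Const} p_! \ubb_E [\rk E],
\]
where $\BC$ refers to the cartesian square
\[
\xymatrix@R=0.6cm{
E \ar[r]^-{i} \ar[d]_-{p} \ar@{}[dr]|{\square} & Q \ar[d]^-{\check q} \\
Y \ar[r]_-{y} & E^*
}
\]
and the final step uses $q \circ i = \mathrm{id}_E$.

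The remaining task is then to verify the identity of morphisms $y^* \check q_! \ubb_Q \to p_! \ubb_E$:
\[
\left( y^* \check q_! \ubb_Q \xrightarrow{\BC} p_! i^* \ubb_Q \xrightarrow{\Const} p_! \ubb_E \right) \;=\; \left( y^* \check q_! \ubb_Q \xrightarrow{\mathrm{unit}} y^* \check q_! i_! \ubb_E \xrightarrow{\Co} y^* y_! p_! \ubb_E \xrightarrow{\mathrm{counit}} p_! \ubb_E \right).
\]
This is a standard compatibility: for a cartesian square with a closed immersion along one edge, the base-change isomorphism $y^* \check q_! \simto p_! i^*$ factors canonically through the unit $\ubb_Q \to i_! \ubb_E$ and the counit $y^* y_! \to \mathrm{id}$. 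As in the other lemmas of Appendix~\ref{sect:isomsheaf}, it suffices to check the analogous statement at the level of nonderived functors, where it follows from a direct description of $\BC$ and the unit/counit for closed immersions. The main obstacle is the careful bookkeeping required to make precise the specialization of \eqref{eqn:fourier^*} at $E_2 = Y$ (in particular, matching $Q_{12}$ with $E$ and identifying $\bT_Y$ with the identity), after which the underlying sheaf-theoretic compatibility is elementary.
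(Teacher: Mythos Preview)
Your proposal is correct and follows essentially the same route as the paper's own proof. Both arguments unwind \eqref{eqn:fourier-con} into its three steps, identify \eqref{eqn:zero-section} with the base-change map for the cartesian square $(i,p,\check q,y)$, and reduce to the elementary compatibility $\BC = \epsilon \circ \Co \circ \eta$ (the paper's diagram~\eqref{eqn:fourier-con-zero-section-bit}), checked at the level of abelian categories.
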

\begin{proof}
Consider the following diagram, in which the spaces and maps are defined in either \S\ref{ss:fourier-sato-defn} or~\S\ref{ss:fourier-constant}.  The commutative square on the left-hand side is cartesian.
\[
\xymatrix@R=0.5cm{
E \ar[r]_i \ar[d]_p \ar@/^2ex/[rr]^{\id} & Q \ar[r]_q \ar[d]^{\check q} & E \\
Y \ar[r]^y & E^*}
\]
Note that both $i$ and $y$ are closed inclusions, so we have adjunction morphisms $\eta: \id \to i_!i^*$ and $\epsilon: y^*y_! \to \id$.  We claim that the following diagram commutes:
\begin{equation}\label{eqn:fourier-con-zero-section-bit}
\vcenter{\xymatrix@R=0.6cm{
y^*\check q_! \ar[r]^-{\eta} \ar[d]_{\BC} &
  y^* \check q_! i_! i^* \ar[r]^-{\Co} &
  y^* y_! p_! i^* \ar[dll]^{\epsilon} \\
p_! i^*}}
\end{equation}
It suffices to check this at the level of abelian categories, and that can be done using the explicit descriptions of the various morphisms found in~\cite[Chapter~2]{kas}. 

Next, unravelling the definitions of~\eqref{eqn:fourier-con} and~\eqref{eqn:zero-section}, we find that we must prove the commutativity of the following diagram.  Here, for brevity, we put $r = -2\rk E$.
\[
\xymatrix@C=0.4cm{
y^* \check q_! q^* \ubb_E \ar[r]^-{\eta} \ar[d]_{\BC} \ar@{.>}@(dl,l)[drrr]^(.35){\eqref{eqn:zero-section}} 
  \ar@{.>}@/^7ex/[rrrr]_-{\eqref{eqn:fourier-con}}&
  y^* \check q_! i_! i^* q^* \ubb_E \ar[r]^-{\Co} &
  y^* y_! p_! i^* q^*\ubb_E \ar[r]^-{\Co} \ar@/^1ex/[dll]^(.35){\epsilon}&
  y^* y_! p_! \ubb_E \ar[r]^-{\eqref{eqn:cohc}} \ar[d]^{\epsilon}&
  y^* y_! \ubb_Y[r] \ar[d]^{\epsilon} \\
p_! i^* q^* \ubb_E \ar[rrr]_{\Co} &&&
  p_! \ubb_E \ar[r]_-{\eqref{eqn:cohc}} &
  \ubb_Y[r]}
\]
The commutativity of the upper left-hand part follows from~\eqref{eqn:fourier-con-zero-section-bit}, and the commutativity of the portion bounded by the arrows labelled `$\epsilon$' is obvious.
\end{proof}

The following two lemmas can be deduced from the basic compatibilities between $\Co$ and $\BC$ established in~\cite[Lemmas~B.4, B.6, B.7 and B.8]{ahr}.  The proofs are routine applications of the general techniques of~\cite[Appendix B]{ahr}; the details are left to the reader.

First, let $\phi : E_1 \to E_2$ be a morphism of vector bundles, and let $g : Y' \to Y$ be a continuous map. Then we can set $E_1':=Y' \times_Y E_1$, $E_2':=Y' \times_Y E_2$, and consider the natural maps
\[
f_1 : E_1' \to E_1, \ f_1' : (E_1')^* \to E_1^*, \ f_2 : E_2' \to E_2, \
f_2' : (E_2')^* \to E_2^*, \ \phi' : E_1' \to E_2'.
\]
We set $r_1:=\rk E_1$ and $r_2:=\rk E_2$.

\begin{lem}
\label{lem:fourier_!-zero-section-new}
The following diagram commutes:
\[
\xymatrix@C=1.5cm@R=0.6cm{
(\ut \phi)^* \bT_{E_1} f_{1!} [-r_1] \ar[d]_-{\eqref{eqn:fourier_!}} \ar[r]^-{\eqref{eqn:fourier^*}} 
& \bT_{E_2} \phi_! f_{1!} [-r_2] \ar[r]^-{\Co}
& \bT_{E_2} f_{2!} \phi'_![-r_2] \ar[d]^-{\eqref{eqn:fourier_!}}\\
(\ut \phi)^* f'_{1!} \bT_{E'_1}[-r_1] \ar[r]^-{\BC}
& f'_{2!} (\ut \phi ')^* \bT_{E'_1}[-r_1] \ar[r]^-{\eqref{eqn:fourier^*}}
& f'_{2!} \bT_{E_2'} \phi'_! [-r_2]
}
\] 

\end{lem}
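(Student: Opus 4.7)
The plan is to reduce the statement to the elementary compatibilities between $\Co$ and $\BC$ proved in \cite[Appendix~B]{ahr}, in the same spirit as the other compatibility lemmas of this appendix. First I would recall the construction of \eqref{eqn:fourier^*} from \cite[Proposition~3.7.14]{kas}: it is assembled out of the composition and base-change isomorphisms attached to the diagram involving $Q_1, Q_2, Q_{12}$ recalled just before Lemma~\ref{lem:fourier-for^*}. Similarly, \eqref{eqn:fourier_!} is assembled from the composition and base-change isomorphisms attached to the diagram preceding Lemma~\ref{lem:fourier-for_!}, once applied to $E_1, E_1'$, once to $E_2, E_2'$.

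Next I would assemble all of these correspondences into one big commutative diagram of spaces: over $Y$ one has $E_1 \xrightarrow{\phi} E_2$ with the associated $Q_1, Q_2, Q_{12}$; over $Y'$ one has $E_1' \xrightarrow{\phi'} E_2'$ with the associated $Q_1', Q_2', Q_{12}'$; and the base-change maps $f_1, f_2, f_1', f_2'$ (and their counterparts between the $Q$'s and $Q_{12}$'s) produce cartesian squares connecting the two stories. The two paths around the square in the statement of the lemma are then reorganized as two different ways of traversing this larger configuration using only $\Co$ and $\BC$ isomorphisms, together with the same shift $[-r_1]$ or $[-r_2]$ everywhere.

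The verification then splits into several elementary sub-squares, each of which is of one of the following forms: (a) a $\BC$--$\BC$ compatibility for a composition of two cartesian squares, (b) a $\Co$--$\Co$ associativity for three composable maps, or (c) a $\BC$--$\Co$ compatibility for a cartesian square whose vertical (or horizontal) side factors as a composition. These are exactly the statements of \cite[Lemmas~B.4, B.6, B.7 and~B.8]{ahr}, so each sub-square commutes. Tiling them together yields the commutativity of the full diagram.

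The main obstacle is purely organizational: laying out the three-dimensional arrangement of cartesian squares so that the two paths in the lemma decompose cleanly into compatible sub-squares, and keeping track of which morphism at each stage is a $\Co$ and which is a $\BC$ (especially at the junction where one passes from the $Q_i$-correspondences over $Y$ to those over $Y'$). There is no genuinely new idea beyond the ones already used in the proofs of Lemmas~\ref{lem:fourier-for_!} and~\ref{lem:fourier-for^*}; as in the analogous verifications of \cite[Appendix~B]{ahr}, the details are routine and I would leave them to the reader.
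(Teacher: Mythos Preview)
Your proposal is correct and matches the paper's own approach exactly: the paper states that this lemma (together with Lemma~\ref{lem:fourier^*-zero-section-new}) ``can be deduced from the basic compatibilities between $\Co$ and $\BC$ established in~\cite[Lemmas~B.4, B.6, B.7 and B.8]{ahr},'' that the proofs are routine applications of the techniques of~\cite[Appendix~B]{ahr}, and leaves the details to the reader. Your outline spells out a bit more of the organization than the paper does, but the method is identical.
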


Now, consider two morphisms of vector bundles $\phi : E_1 \to E_2$ and $\psi : E_2 \to E_3$. We set $r_1=\rk E_1$, $r_2=\rk E_2$, and $r_3= \rk E_3$.

\begin{lem}
\label{lem:fourier^*-zero-section-new}
The following diagram commutes:
\[
\xymatrix@R=0.6cm{
\ut \psi^* \ut \phi^* \bT_{E_1}[-r_1] \ar[r]^-{\Co} \ar[d]_-{\eqref{eqn:fourier^*}} &
\ut (\psi \circ \phi)^* \bT_{E_1} [-r_1] \ar[r]^-{\eqref{eqn:fourier^*}} &
  \bT_{E_3} (\psi \circ \phi)_! [-r_3] \ar[d]^-{\Co} \\
\ut \psi^* \bT_{E_2} \phi_! [-r_2] \ar[rr]^-{\eqref{eqn:fourier^*}} &&
 \bT_{E_3} \psi_! \phi_! [-r_3]
}
\]
\end{lem}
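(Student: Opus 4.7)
The plan is to reduce the commutativity to a finite sequence of invocations of the standard diagrammatic compatibilities between $\Co$ and $\BC$ isomorphisms collected in \cite[Appendix B]{ahr}, as the authors indicate in the remark preceding the lemma. First, I would introduce auxiliary incidence varieties for all the morphisms involved. Writing $\phi_{12}=\phi$, $\phi_{23}=\psi$, $\phi_{13}=\psi\circ\phi$, let $Q_i\subset E_i\times_Y E_i^*$ denote the standard incidence variety appearing in the definition of $\bT_{E_i}$, and for each pair $i<j$ let $Q_{ij}\subset E_i\times_Y E_j^*$ be the ``mixed'' incidence variety defined by $\mathrm{Re}\langle\phi_{ij}(x),y\rangle\leq 0$. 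Each $Q_{ij}$ is simultaneously a pullback of $Q_i$ along $(\mathrm{id},\ut\phi_{ij})$ and of $Q_j$ along $(\phi_{ij},\mathrm{id})$, yielding two cartesian squares. I would also introduce a ``triple'' incidence object $Q_{123}$ sitting over $E_1\times_Y E_2^*\times_Y E_3^*$ and projecting compatibly to $Q_{12}$, $Q_{13}$ and $Q_{23}$ through further cartesian squares that mediate between the various decompositions of the pairing induced by $\psi\circ\phi$.

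Second, I would expand each occurrence of \eqref{eqn:fourier^*} in the diagram using its construction recalled just before Lemma~\ref{lem:fourier-for^*}: each such arrow is a specified composition of $\BC$ and $\Co$ isomorphisms read off from the appropriate incidence picture. Performing this expansion for all three instances of \eqref{eqn:fourier^*} in the pentagon produces a large diagram, every small region of which is built from $\Co$ and $\BC$ isomorphisms applied to composable or cartesian configurations among the spaces $E_i$, $E_j^*$, $Q_{ij}$, and $Q_{123}$. The two outer paths of the original pentagonal diagram appear as two routes around the boundary of this expanded diagram.

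Third, I would verify the commutativity of each small region by matching it with the hypothesis of one of the lemmas cited by the authors: Lemmas~B.4 and~B.6 of \cite{ahr} handle the associativity of $\Co$ and its two-step decompositions (for both $(\cdot)_!$ and $(\cdot)^*$), while Lemmas~B.7 and~B.8 handle the compatibility of $\BC$ with $\Co$ in the two standard configurations (a composition stacked over a base-change rectangle, in either order). In this context, the triple incidence object $Q_{123}$ plays exactly the role that $Q_{12}$ played in the proof of Lemma~\ref{lem:fourier-for^*}.

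The principal obstacle is purely combinatorial, as the authors themselves acknowledge: the bookkeeping required to lay out the expanded diagram precisely and to recognize each small region as a direct instance of one of the cited compatibility lemmas. No new conceptual input is needed beyond the definitions and the diagrammatic calculus of \cite[Appendix B]{ahr}.
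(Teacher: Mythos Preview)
Your proposal is correct and follows exactly the approach the paper indicates: the paper does not give a proof but simply states that the result is a routine application of the compatibilities between $\Co$ and $\BC$ from \cite[Lemmas~B.4, B.6, B.7 and B.8]{ahr}, with details left to the reader. Your plan to introduce the auxiliary incidence varieties $Q_{ij}$ and a triple object $Q_{123}$, expand each instance of \eqref{eqn:fourier^*} via the diagram preceding Lemma~\ref{lem:fourier-for^*}, and then match each small region to one of the cited lemmas is precisely the intended verification.
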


\end{document}